\documentclass[10pt,a4paper,oneside,reqno]{amsart}

\usepackage{setspace}
\usepackage[totalwidth=14cm,totalheight=21.5cm]{geometry}
\usepackage[T1]{fontenc}
\usepackage[dvips]{graphicx}
\usepackage{epsfig}
\usepackage{amsmath,amssymb,amscd,amsthm}
\usepackage{subfigure}
\usepackage[latin1]{inputenc}
\usepackage{latexsym}
\usepackage{graphics}
\usepackage{colortbl} 
\usepackage{color}
\usepackage{ae}
\usepackage{enumitem}
\usepackage[all]{xy}
\usepackage{scalerel}
\usepackage{tikz}
\usepackage{cancel}
\usepackage{bbm,amsbsy}
\usepackage{mathrsfs}  
\usepackage[colorlinks=true,pagebackref=true]{hyperref}
\usepackage[normalem]{ulem}
\usepackage{nicefrac}
\usepackage{xfrac}
\usepackage{faktor}
\usepackage{tikz-cd} 

\usepackage{nomencl}
\usepackage[normalem]{ulem}

\makenomenclature


\linespread{1.15}

\makeatletter
\newtheorem*{rep@theorem}{\rep@title}
\newcommand{\newreptheorem}[2]{%
\newenvironment{rep#1}[1]{%
 \def\rep@title{#2 \ref{##1}}%
 \begin{rep@theorem}}%
 {\end{rep@theorem}}}
\makeatother

\makeatletter
\newtheorem*{rep@cor}{\rep@title}
\newcommand{\newrepcor}[2]{%
\newenvironment{rep#1}[1]{%
 \def\rep@title{#2 \ref{##1}}%
 \begin{rep@cor}}%
 {\end{rep@cor}}}
\makeatother

\makeatletter
\newtheorem*{rep@prop}{\rep@title}
\newcommand{\newrepprop}[2]{%
\newenvironment{rep#1}[1]{%
 \def\rep@title{#2 \ref{##1}}%
 \begin{rep@prop}}%
 {\end{rep@prop}}}
\makeatother

\newtheorem{corollary}{Corollary}[section]

\newtheorem{theorem}[corollary]{Theorem}

\newtheorem{proposition}[corollary]{Proposition}

\newrepcor{corollary}{Corollary}
\newreptheorem{theorem}{Theorem}
\newrepprop{proposition}{Proposition}

\newtheorem{lemma}[corollary]{Lemma}

\theoremstyle{definition}
\newtheorem{definition}[corollary]{Definition}
\theoremstyle{remark}
\newtheorem{remark}[corollary]{Remark}
\newtheorem*{remark*}{Remark}
\newtheorem{example}[corollary]{Example}

\newtheorem*{notation*}{Notation}

\newcommand{\R}{\mathbb{R}}
\newcommand{\HH}{\mathcal{H}}
\newcommand{\Hyp}{\mathbb{H}}

\newcommand{\AdS}{\mathbb{A}\mathrm{d}\mathbb{S}}
\newcommand{\dS}{\mathrm{d}\mathbb{S}}
\newcommand{\isom}{\mathrm{Isom}}
\newcommand{\stab}{\mathrm{Stab}}

\newcommand{\pd}{\partial}

\begin{document}

\setcounter{secnumdepth}{2}
\setcounter{tocdepth}{2}

\title[The half-space model of pseudo-hyperbolic space]{The half-space model of pseudo-hyperbolic space}

\author[Andrea Seppi]{Andrea Seppi}
\address{Andrea Seppi: Institut Fourier, UMR 5582, Laboratoire de Math\'ematiques,
Universit\'e Grenoble Alpes, CS 40700, 38058 Grenoble cedex 9, France..} \email{andrea.seppi@univ-grenoble-alpes.fr}

\author[Enrico Trebeschi]{Enrico Trebeschi}
\address{Enrico Trebeschi: Dipartimento di Matematica,
AMS - Universit\`a di Bologna, Via Zamboni, 33, 40126 Bologna, Italy.}
 \email{enrico.trebeschi@studio.unibo.it}

\thanks{The first author is member of the national research group GNSAGA}

\maketitle

\begin{abstract}
In this note we develop a half-space model for the pseudo-hyperbolic space $\Hyp^{p,q}$, for any $p,q$ with $p\geq 1$. This half-space model embeds isometrically onto the complement of a degenerate totally geodesic hyperplane in $\Hyp^{p,q}$. We describe the geodesics, the totally geodesic submanifolds, the horospheres, the isometry group in the half-space model, and we explain how to interpret the boundary at infinity in this setting.
\end{abstract}

\section{Introduction}
The \emph{pseudo-hyperbolic space} $\Hyp^{p,q}$ of signature $(p,q)$ is the space
$$\Hyp^{p,q}=\{X\in \R^{p,q+1}\,|\,\langle X,X\rangle=-1\}/\{\pm\mathrm{Id}\}~,$$
where $\R^{p,q+1}$ is the pseudo-Euclidean space of signature $(p,q+1)$. It is equipped with a pseudo-Riemannian metric induced by the bilinear form of $\R^{p,q+1}$, which makes it a geodesically complete pseudo-Riemannian manifold of constant sectional curvature $-1$. See for instance \cite{ctt,dgk,one:sem} for more details on $\Hyp^{p,q}$ in arbitrary signature and dimension. When $q=0$, $\Hyp^{n,0}$ is isometric to the \emph{hyperbolic space} $\Hyp^n$; for $q=1$, $\Hyp^{n,1}$ is the \emph{Anti-de Sitter space}, see for instance \cite{bee:ehr} and \cite{bon:sep}. 

The purpose of this note is to describe a \emph{half-space} model for $\Hyp^{p,q}$. This is defined as the open half-space $\{z>0\}$ in $\R^{p+q}$ endowed with the pseudo-Riemannian metric
$$\frac{dx_1^2+\ldots+dx_{p-1}^2-dy_1^2-\ldots-dy_{q}^2+dz^2}{z^2}~,$$
and denoted $\HH^{p,q}$.
When $q=0$, this is the usual half-space model of $\Hyp^n$. When $q\geq 1$, this space is not globally isometric to $\Hyp^{p,q}$ (which is indeed not simply connected, hence not even homeomorphic to the half-space). In Section \ref{sec:defi} we will show that $\HH^{p,q}$ embeds isometrically into $\Hyp^{p,q}$, with image the complement of a totally geodesic degenerate hyperplane. In other words, $\HH^{p,q}$ is \emph{not} a geodesically complete pseudo-Riemannian manifold.

We remark that the half-space model for the Anti-de Sitter space $\Hyp^{2,1}$ has been introduced by Danciger in \cite{dan:phd}, and has been used for instance in \cite{tam:zei}; in any dimension, the half-space model of $\Hyp^{n,1}$ also appears in \cite{rio:sep}. Of course one analogously defines the pseudo-spherical space $\mathbb S^{p,q}$, by taking the space of vectors $X$ such that $\langle X,X\rangle=1$ in $\R^{p+1,q}$, and taking the quotient by $\{\pm\mathrm{Id}\}$. The space obtained in this way is anti-isometric to $\Hyp^{q,p}$. A half-space model of $\mathbb S^{p,q}$ is defined similarly, provided $q\geq 1$. As a particular case, the half-space model of the de Sitter space $\mathbb S^{p,1}$ has been studied in \cite{nom:hal}. We decided to focus on the case of $\Hyp^{p,q}$ for the sake of definiteness: up to changing a sign to the pseudo-Riemannian metric, one recovers the half-space model for $\mathbb S^{q,p}$ if $p\geq 1$.

The main results of this note aim at describing the geometry of $\HH^{p,q}$. We give a classification result for the totally geodesic subspaces of any dimension (Section \ref{sec:tot}), and then a more refined classification of the geodesics (Section \ref{sec:geo}). We will see that spacelike geodesics (\emph{i.e.} those for which the tangent vector is positive for the pseudo-Riemannian metric) are of four types: vertical lines, ellipses, half-parabolas and half-branches of hyperbolas, all orthogonal to the horizontal hyperplane (\emph{i.e.} the boundary of the half-space). The first two types include the usual geodesics in the half-space model of the hyperbolic space. Lightlike and timelike geodesics (\emph{i.e.} those for which the tangent vector is respectively null and negative) are lines and branches of hyperbolas respectively. In Section \ref{sec:bdy} we provide a description of the boundary at infinity $\pd_\infty\Hyp^{p,q}$, seen from the half-space model. Of course the boundary $\pd\HH^{p,q}$ in $\R^{p+q}$, which is a copy of the pseudo-Euclidean space $\R^{p-1,q}$, is conformally embedded in $\pd_\infty\Hyp^{p,q}$; we describe topologically its compactification $\pd_\infty\HH^{p,q}$ in terms of divergence of totally geodesic degenerate hypersurfaces. In Section \ref{sec:horo} we describe the horospheres in the half-space model. Finally, in Section \ref{sec:isometry}, we compute the isometry group $\isom(\HH^{p,q})$, as a result of the analysis of geodesics. We remark that this does not correspond to the isometry group of $\Hyp^{p,q}$, but only to a subgroup that preserves the complement of a degenerate hyperplane. Nevertheless, we study the action of $\isom(\Hyp^{p,q})$ on the half-space model in terms of $\isom(\HH^{p,q})$ and some transformations which are the analogue of inversions in hyperbolic geometry.

\subsection*{Acknowledgements} We would like to thank Stefano Francaviglia for his interest and encouragement. The first author is grateful to J\'er\'emy Toulisse for related discussions. We would like to thank an anonymous referee for useful comments.

\section{First definitions and properties}\label{sec:defi}

We start by recalling the standard definition of pseudo-hyperbolic space. Given integers $p,q\geq 0$, we define:

$$\widetilde\Hyp^{p,q}=\{X\in \R^{p,q+1}\,|\,\langle X,X\rangle=-1\}~,$$
where $\R^{a,b}$ is the pseudo-Euclidean space of signature $(a,b)$ and $\langle\cdot,\cdot\rangle$ denotes the corresponding bilinear form, namely if $X=(X_1,\ldots,X_{a+b})$ and $Y=(Y_1,\ldots,Y_{a+b})$ then
$$\langle X,Y\rangle=X_1Y_1+\dots X_aY_a-X_{a+1}Y_{a+1}-\ldots-X_{a+b}Y_{a+b}~.$$

 It is known that $\widetilde\Hyp^{p,q}$, endowed with the restriction of the bilinear form $\langle\cdot,\cdot\rangle$, is a pseudo-Riemannian manifold of signature $(p,q)$ of constant sectional curvature $-1$. Then we define the pseudo-hyperbolic space as
$$\Hyp^{p,q}=\widetilde\Hyp^{p,q}/\{\pm\mathrm{Id}\}~.$$
Since $\pm\mathrm{Id}$ acts isometrically on $\widetilde\Hyp^{p,q}$, $\Hyp^{p,q}$ inherits a pseudo-Riemannian metric of constant sectional curvature $-1$.

Recall also that the boundary at infinity of pseudo-hyperbolic space is defined as:
$$\partial_\infty\Hyp^{p,q}=\mathrm{P}\{X\in \R^{p,q+1}\,|\,\langle X,X\rangle=0\}\subset\R\mathrm P^{p+q}~,$$
namely the projectivised cone of null (\emph{i.e.} isotropic) vectors.

\subsection{The half-space model}

Let us now introduce the half-space model of $\Hyp^{p,q}$, which is the main object of this paper. In the following, we will always assume $p\geq 1$.

\begin{definition}
The \emph{half-space model} of pseudo-hyperbolic geometry is the space 
$$\HH^{p,q}=\{(x,y,z)\in \R^{p-1}\oplus\R^q\oplus\R\,|\,z>0\}~,$$
endowed with the pseudo-Riemannian metric
\begin{equation}\label{eq:metric}
g_{p,q}=\frac{dx_1^2+\ldots+dx_{p-1}^2-dy_1^2-\ldots-dy_{q}^2+dz^2}{z^2}~.
\end{equation}
\end{definition}

We will also denote by $\partial\HH^{p,q}$ the boundary of $\HH^{p,q}$ in $\R^{p+q}$, namely the hyperplane $\{z=0\}$. We mention here some well-known specific cases.

\begin{example}
When $q=0$, $\HH^{n,0}$ is the usual half-space model of the hyperbolic space $\Hyp^n=\Hyp^{n,0}$. 
\end{example}

\begin{example}
When $q=1$, $\HH^{n-1,1}$ is the so-called half-space model of the Anti-de Sitter space $\AdS^n=\Hyp^{{n-1,1}}$. This has been introduced for $\AdS^3$ in \cite{dan:phd}, and has been applied for instance in \cite{tam:zei}, and in \cite{rio:sep} in arbitrary dimension.
\end{example}

\begin{example}
The pseudo-hyperbolic space $\Hyp^{1,n-1}$ coincides with the $n$-dimensional de Sitter space $\dS^{n-1,1}$ up to changing the sign to the metric tensor. The half-space model in this case, again up to changing the sign, appeared in \cite{nom:hal}.
\end{example}

\subsection{An isometric embedding}

It can be checked directly that the sectional curvature of $\HH^{p,q}$ is constant and equal to $-1$, for any $p,q$. Nevertheless, this follows from the next proposition, which justifies the claim that $\HH^{p,q}$ is a ``model'' for the pseudo-hyperbolic space $\Hyp^{p,q}$.

\begin{proposition}\label{emb}
There exists an isometric embedding 
$$\iota_{p,q}\colon\HH^{p,q}\to \Hyp^{p,q}~.$$
If $q=0$, $\iota_{p,q}$ is surjective. Otherwise, its image is the complement of a totally geodesic degenerate hyperplane in $\Hyp^{p,q}$.
\end{proposition}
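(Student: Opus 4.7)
The plan is to write down $\iota_{p,q}$ explicitly, verify the isometry by a clean rescaling argument, and then identify the complement of its image as a degenerate totally geodesic hyperplane.

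I would use coordinates $(X_1,\dots,X_{p+q+1})$ on $\R^{p,q+1}$ (first $p$ spacelike, last $q+1$ timelike) and define
\[
\widetilde\iota_{p,q}(x,y,z)=\frac{1}{z}\,\Psi(x,y,z),\qquad \Psi(x,y,z)=\bigl(x_1,\dots,x_{p-1},\,P,\,y_1,\dots,y_q,\,Q\bigr),
\]
with $P=(1-\|x\|^2+\|y\|^2-z^2)/2$ and $Q=(1+\|x\|^2-\|y\|^2+z^2)/2$, where $\|x\|^2=\sum_i x_i^2$ and $\|y\|^2=\sum_j y_j^2$; then $\iota_{p,q}$ is the composition of $\widetilde\iota_{p,q}$ with the projection $\widetilde\Hyp^{p,q}\to\Hyp^{p,q}$. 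Since $P+Q=1$ and $P-Q=-\|x\|^2+\|y\|^2-z^2$, one immediately gets $\langle\Psi,\Psi\rangle=\|x\|^2-\|y\|^2+P^2-Q^2=-z^2$, so $\widetilde\iota_{p,q}$ takes values in $\widetilde\Hyp^{p,q}$.

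For the isometry I would compute $d\widetilde\iota_{p,q}=d\Psi/z-\Psi\,dz/z^2$ and rely on two identities. First, the formulas give $dQ=-dP$ on the nose, so
\[
\langle d\Psi,d\Psi\rangle=\sum_{i=1}^{p-1}dx_i^2-\sum_{j=1}^q dy_j^2;
\]
second, differentiating $\langle\Psi,\Psi\rangle=-z^2$ yields $\langle\Psi,d\Psi\rangle=-z\,dz$. Substituting these into $\widetilde\iota_{p,q}^*\langle\cdot,\cdot\rangle$, the mixed term contributes $+2\,dz^2/z^2$ and the $\langle\Psi,\Psi\rangle/z^4$ term contributes $-dz^2/z^2$, so the pullback is exactly $g_{p,q}$. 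Injectivity is clear from the fact that the $p$-th and $(p+q+1)$-th coordinates of $\widetilde\iota_{p,q}$ sum to $1/z>0$, which singles out a representative in $\widetilde\Hyp^{p,q}$ and lets one recover $z,x,y$ from it. An injective isometric immersion between equidimensional manifolds is an isometric embedding, so $\iota_{p,q}$ is as claimed.

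To describe the image I would observe that $\iota_{p,q}(\HH^{p,q})=\{[X]\in\Hyp^{p,q}\mid X_p+X_{p+q+1}\neq 0\}$, and the complement is the projection of $H\cap\widetilde\Hyp^{p,q}$ where $H=\{X_p+X_{p+q+1}=0\}$. This is a totally geodesic hyperplane since $H$ is linear. For $q\ge 1$, the vector $v_0=e_p-e_{p+q+1}$ lies in $H$, is null, and satisfies $\langle v_0,X\rangle=X_p+X_{p+q+1}$, which vanishes for all $X\in H$; hence $v_0$ spans the radical of $\langle\cdot,\cdot\rangle|_H$ and stays tangent to $H\cap\widetilde\Hyp^{p,q}$ everywhere, so the induced metric is degenerate. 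For $q=0$, the form restricts on $H$ to $\sum_{i=1}^{p-1}X_i^2\ge 0$, so $H\cap\widetilde\Hyp^{p,0}=\emptyset$ and $\iota_{p,0}$ is surjective. The main delicate point is keeping track of signs in the isometry computation, but the identities $dQ=-dP$ and $\langle\Psi,\Psi\rangle=-z^2$ make the verification essentially automatic.
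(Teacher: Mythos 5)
Your proposal is correct and follows essentially the same route as the paper: the map $\widetilde\iota_{p,q}$ you write down is identical to the one used there, the image is identified as $\{X_p+X_{p+q+1}>0\}$ in the same way, and the complement is recognized as the degenerate totally geodesic hyperplane $\{X_p+X_{p+q+1}=0\}$ via the same null vector $e_p-e_{p+q+1}$. The only difference is cosmetic: your verification of the pullback metric via $d\widetilde\iota_{p,q}=d\Psi/z-\Psi\,dz/z^2$ together with the identities $dQ=-dP$ and $\langle\Psi,d\Psi\rangle=-z\,dz$ is a tidier packaging of the paper's explicit Jacobian computation.
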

\begin{proof}
We will first define an embedding $\tilde\iota_{p,q}\colon\HH^{p,q}\to \widetilde\Hyp^{p,q}\subset\R^{p,q+1}$. 
To simplify the notation, we will write $$h(x,y)=x_{1}^{2}+\dots+x_{p-1}^{2}-y_{1}^{2}-\dots-y_{q}^{2}$$
for $(x,y)\in\R^{p-1}\oplus\R^q$. Then define $\tilde\iota_{p,q}(x,y,z)=(X_1,\ldots,X_{p+q+1})$ where:

\begin{align*}
		&X_{i}=\frac{x_{i}}{z} && i=1,\dots p-1,\\
		&X_{p}=\frac{1-h(x,y)-z^{2}}{2z} &&\\	
		&X_{j+p}=\frac{y_{j}}{z} && j=1,\dots q,\\
		&X_{p+q+1}=\frac{1+h(x,y)+z^{2}}{2z}. &&
\end{align*}
One checks immediately that $X_{p}^{2}-X_{p+q+1}^{2}=-1-{h(x,y)}/{z^{2}}$, hence $\tilde{\iota}_{p,q}$ takes values in $\widetilde\Hyp^{p,q}$, namely $\langle\tilde{\iota}_{p,q}(x,y,z),\tilde{\iota}_{p,q}(x,y,z)\rangle=-1$.

To prove that $\tilde{\iota}_{p,q}$ is an isometry, one can easily compute the differential:
\begin{align*}
	d\tilde{\iota}_{p,q}\left(\frac{\pd}{\pd x_{i}}\right)=&\frac{1}{z}\frac{\pd}{\pd X_{i}}-\frac{x_{i}}{z}\frac{\pd}{\pd X_{p}}+\frac{x_{i}}{z}\frac{\pd}{\pd X_{p+q+1}}\\
	d\tilde{\iota}_{p,q}\left(\frac{\pd}{\pd y_{j}}\right)=&\frac{y_{j}}{z}\frac{\pd}{\pd X_{p}}+\frac{1}{z}\frac{\pd}{\pd X_{j+p}}-\frac{y_{j}}{z}\frac{\pd}{\pd X_{p+q+1}}\\
	d\tilde{\iota}_{p,q}\left(\frac{\pd}{\pd z}\right)=&-\sum_{i=1}^{p-1}\frac{x_{i}}{z^{2}}\frac{\pd}{\pd X_{i}}-\frac{1-h(x,y)+z^{2}}{2z^{2}}\frac{\pd}{\pd X_{p}}-\\
	&-\sum_{j=1}^{q}\frac{y_{j}}{z^{2}}\frac{\pd}{\pd X_{j+p}}-\frac{1+h(x,y)-z^{2}}{2z^{2}}\frac{\pd}{\pd X_{p+q+1}}
\end{align*}
and then an easy calculation shows that $\tilde{\iota}_{p,q}^{*}\langle\cdot,\cdot\rangle$ equals the metric tensor \eqref{eq:metric}.

Let us now show that 
\begin{equation}\label{eq:aussois}
\tilde{\iota}_{p,q}(\HH^{p,q})=\widetilde\Hyp^{p,q}\cap\{X_{p}+X_{p+q+1}>0\}~.
\end{equation}
The inclusion $\subseteq$ is trivial as $X_{p}+X_{p+q+1}={1}/{z}>0$. For the other inclusion, given $(X_1,\ldots,X_{p+q+1})$ such that $X_1^2+\ldots+X_p^2-X_{p+1}^2-\ldots X_{p+q+1}^2=-1$ and $X_{p}+X_{p+q+1}>0$, define
\begin{align*}
		&x_{i}=\frac{X_{i}}{X_{p}+X_{p+q+1}} && i=1,\dots p-1\\
		&y_{j}=\frac{X_{j+p}}{X_{p}+X_{p+q+1}} && j=1,\dots q\\
		&z=\frac{1}{X_{p}+X_{p+q+1}}~.
\end{align*}
Then one checks that $\tilde{\iota}_{p,q}(x,y,z)=(X_1,\ldots,X_{p+q+1})$.

Incidentally, in the above argument we constructed an inverse of $\tilde\iota_{p,q}$ over its image, which implies that $\tilde\iota_{p,q}$ is injective. It also follows from \eqref{eq:aussois} that the restriction of $\pi$ to the image of $\tilde{\iota}_{p,q}$ is injective, where $\pi$ is the projection from $\widetilde \Hyp^{p,q}$ to $\Hyp^{p,q}$. Hence,
defining $\iota_{p,q}=\pi\circ\tilde{\iota}_{p,q}$, $\iota_{p,q}$ is an isometric embedding whose image is the complement of $P\cap\Hyp^{p,q}$, where $P$ is the hyperplane defined by the condition $X_{p}+X_{p+q+1}=0$. It is known that (when $q\geq 1$) this is a totally geodesic hyperplane in $\Hyp^{p,q}$, which is degenerate because $P$ is degenerate in $\R^{p,q+1}$, being the orthogonal complement of the line spanned by the isotropic vector $\pd/\pd X_p-\pd/\pd X_{p+q+1}$.
Observe that for $q=0$, the intersection $P\cap\Hyp^{p,0}$ is empty, hence we recover that $\iota_{p,0}$ is a global isometry between the half-space model and the hyperboloid model of the hyperbolic space.
\end{proof}

\subsection{Symmetries}\label{sym}

The following lemma, which also serves as a definition for the group $G$, introduces some symmetries of the model $\HH^{p,q}$. 

\begin{lemma}\label{lemma:G}
The group
$$G=\{(x,y,z)\mapsto \lambda(A(x,y)+(x_0,y_0),z)\,|\,\lambda>0,A\in\mathrm{O}(p-1,q),(x_0,y_0)\in \R^{p-1}\oplus\R^q\}$$
is a subgroup of the isometry group $\isom(\HH^{p,q})$.
Moreover, $G$ acts transitively on $\HH^{p,q}$ and the stabilizer of a point is isomorphic to $\mathrm{O}(p-1,q)$.
\end{lemma}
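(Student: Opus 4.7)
The plan is to decompose the group $G$ into three families of generators — horizontal translations $(x,y,z)\mapsto(x+x_0,y+y_0,z)$, linear actions $(x,y,z)\mapsto(A(x,y),z)$ for $A\in\mathrm{O}(p-1,q)$, and positive dilations $(x,y,z)\mapsto\lambda(x,y,z)$ with $\lambda>0$ — verify that each family acts on $\HH^{p,q}$ by isometries, and then observe that an arbitrary element of $G$ is a composition of such transformations. Before that, I would check that $G$ as presented is actually closed under composition and inversion: a direct computation shows that composing two elements with parameters $(\lambda_i,A_i,(x_i,y_i))$ produces an element of the same form, with new parameters $\lambda_1\lambda_2$, $A_2A_1$, and $A_2(x_1,y_1)+\lambda_1^{-1}(x_2,y_2)$; inverses are obtained analogously.

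For the isometry check, horizontal translations preserve each term of $g_{p,q}$ trivially. Elements $A\in\mathrm{O}(p-1,q)$ preserve the quadratic form $h(x,y)=x_1^2+\dots+x_{p-1}^2-y_1^2-\dots-y_q^2$ by definition, hence preserve the horizontal part of the numerator of $g_{p,q}$, while leaving $z$ and $dz$ untouched. The dilation $(x,y,z)\mapsto\lambda(x,y,z)$ multiplies both the numerator and the denominator $z^2$ of $g_{p,q}$ by $\lambda^2$, so the metric is invariant. Positivity of $\lambda$ ensures that $\HH^{p,q}$ is preserved, since $z>0$ is sent to $\lambda z>0$.

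For transitivity, given any $(x_*,y_*,z_*)\in\HH^{p,q}$, the element of $G$ with $\lambda=z_*$, $A=\mathrm{Id}$, $(x_0,y_0)=(x_*/z_*,y_*/z_*)$ sends $(0,0,1)$ to $(x_*,y_*,z_*)$. For the stabilizer, imposing $\lambda(A(0,0)+(x_0,y_0),1)=(0,0,1)$ forces $\lambda=1$ and $(x_0,y_0)=(0,0)$, while $A\in\mathrm{O}(p-1,q)$ is left free; the assignment $T\mapsto A$ is then the desired isomorphism $\stab_G(0,0,1)\cong\mathrm{O}(p-1,q)$.

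There is no serious obstacle here: every step reduces to an algebraic verification. The only mildly fiddly point is the explicit closure computation, which I would record once and for all so that it becomes transparent that $G$ is indeed a group, not merely a set of transformations.
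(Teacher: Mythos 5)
Your proposal is correct and follows essentially the same route as the paper: decompose $G$ into dilations, $\mathrm{O}(p-1,q)$-actions and horizontal translations, check each preserves $g_{p,q}$, and use the same explicit element for transitivity and the same computation for the stabilizer. You merely spell out the closure computation and the metric-invariance checks that the paper declares immediate or leaves as an exercise.
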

\begin{proof}
It is immediate to check that $G$ is a subgroup. To see that it acts by isometries on $\HH^{p,q}$, it suffices to show that the transformations of the form
\begin{align*}
(x,y,z)&\mapsto \lambda(x,y,z)\\
(x,y,z)&\mapsto (A(x,y),z)\\
(x,y,z)&\mapsto (x+x_0,y+y_0,z)
\end{align*}
preserve the metric tensor \eqref{eq:metric}, which we leave as an easy exercise.

The action of $G$ is transitive since the map $$(x,y,z)\mapsto z_{0}\left(x+\frac{x_0}{z_0},y+\frac{y_0}{z_0},z\right)$$ sends $(0,0,1)$ to $(x_{0},y_{0},z_{0})$. Then an isometry of $G$ fixes $(0,0,1)$ if and only if $\lambda=1$, $x_{0}=y_{0}=0$, \emph{i.e.} $\stab_{G}(0,0,1)\cong\mathrm{O}(p-1,q)$.
\end{proof}

We will see in Theorem \ref{cor:G} that $G$ is actually the full isometry group $\isom(\HH^{p,q})$ when $q\geq 1$. Since every local isometry between open neighbourhoods of $\Hyp^{p,q}$ extends to a global isometry, the isometric embedding $\iota_{p,q}$ induces a group monomorphism from $G$ to $\isom(\Hyp^{p,q})$, which is clearly not surjective because in $\isom(\Hyp^{p,q})$ there are isometries that do not preserve the totally geodesic hyperplane whose complement is the image of $\iota_{p,q}$. (Indeed if $n=p+q$, then $G$ is a Lie group of dimension $(n^2-n+2)/2$, while $\isom(\Hyp^{p,q})$, which is isomorphic to a double quotient of $\mathrm{O}(p,q+1)$, has dimension $n(n+1)/2=\dim G+n-1$).

\section{Totally geodesic submanifolds}\label{sec:tot}
The next step in our analysis is the study of the totally geodesic subspaces of $\HH^{p,q}$. In the following, when referring to a totally geodesic submanifold (and in the particular case of geodesics and totally geodesic hypersurfaces) we will always implicitely assume that they are \emph{maximal}, \emph{i.e.} not properly included in any other totally geodesic submanifold of the same dimension.

\subsection{The geodesic equations}

We start by writing the geodesic equations. One can easily check that the only nonvanishing Christoffel symbols of $\HH^{p,q}$ are
	\begin{align*}
		&\Gamma_{x_{i},z}^{x_{i}}=\Gamma_{z,x_{i}}^{x_{i}}=-1/z && i=1,\dots p-1,\\
		&\Gamma_{y_{j},z}^{y_{j}}=\Gamma_{z,y_{j}}^{y_{j}}=-1/z && j=1,\dots q,\\
		&\Gamma_{x_{i},x_{i}}^{z}=1/z && i=1,\dots p-1,\\
		&\Gamma_{y_{j},y_{j}}^{z}=-1/z && j=1,\dots q,\\
		&\Gamma_{z,z}^{z}=-1/z.
	\end{align*}	
	In the following, by a small abuse of notation, given a point $(x,y,z)\in\HH^{p,q}$ (with $x\in \R^{p-1}$, $y\in\R^q$, $z\in\R$), we will denote by $\|\cdot\|$ the Euclidean norm of $x$ and $y$, namely:
	
	$$\|x\|^2=\sum_{i=1}^{p-1}x_i^2\qquad \|y\|^2=\sum_{j=1}^{q}y_j^2~.$$

	From the above computation of Christoffel symbols, we obtain that a curve $\gamma(t)=(x(t),y(t),z(t))$ is geodesic if and only if the following system of ODEs is satisfied:
	\begin{align}\label{geo}
		\begin{cases}
			x''-\frac{2z'}{z}x'=0\\
			y''-\frac{2z'}{z}y'=0\\
			z''+\frac{1}{z}\left(\|x'\|^{2}-\|y'\|^{2}-|z'|^{2}\right)=0
		\end{cases}.
	\end{align}
As a consequence of this expression of the geodesic equations, we show here that vertical subspaces of any dimension are totally geodesic. For non-degenerate subspaces, this could be proved also by a standard symmetry argument, by finding an isometry whose fixed points set coincides with the subspace. But the argument below works for degenerate subspaces as well. 

\begin{proposition}\label{vert}
	Every submanifold of the form 
	$$V_\ell:=\{(x,y,z)\in\R^{p-1}\oplus\R^{q}\oplus\R\,|\,(x,y)\in \ell, z>0\}~,$$ for $\ell$ an affine subspace of $\R^{p+q-1}$, is totally geodesic.
\end{proposition}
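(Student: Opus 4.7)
The strategy is to verify that any geodesic $\gamma(t)=(x(t),y(t),z(t))$ of $\HH^{p,q}$ with initial point in $V_\ell$ and initial velocity tangent to $V_\ell$ remains in $V_\ell$ throughout its maximal interval of definition. By standard ODE theory, this is equivalent to $V_\ell$ being totally geodesic.

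The key observation is that the first two lines of the geodesic system \eqref{geo} can be rewritten componentwise as $(x_i'/z^2)'=0$ and $(y_j'/z^2)'=0$. Hence there exist constant vectors $v\in\R^{p-1}$ and $w\in\R^q$, depending only on the initial conditions, such that
$$x'(t)=z(t)^2\,v,\qquad y'(t)=z(t)^2\,w$$
for every $t$ in the interval of definition. In particular, the ``horizontal'' velocity $(x'(t),y'(t))$ keeps a fixed direction, being rescaled only by the positive factor $z(t)^2$.

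Now write $\ell=(x_0,y_0)+W$ for a linear subspace $W\subseteq\R^{p+q-1}$. The tangent space to $V_\ell$ at any of its points is $W\oplus\R\,\pd/\pd z$. Assume $\gamma(0)=(x_0,y_0,z_0)\in V_\ell$ and $\dot\gamma(0)\in W\oplus\R\,\pd/\pd z$, so that $(x'(0),y'(0))\in W$. Since $(v,w)=z_0^{-2}(x'(0),y'(0))$ and $W$ is a linear subspace, $(v,w)\in W$, and therefore $(x'(t),y'(t))=z(t)^2(v,w)\in W$ for all $t$. Integrating and using $(x(0),y(0))\in\ell$ yields $(x(t),y(t))\in\ell$, so $\gamma(t)\in V_\ell$ as required.

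I do not expect any real obstacle: once the first two geodesic equations are integrated as above, the conclusion is immediate from the fact that straight lines in $\R^{p+q-1}$ remain in any affine subspace through their starting point that contains their direction vector. As the text remarks, a symmetry/reflection argument would suffice when the restriction of $h$ to $W$ is non-degenerate; the virtue of this direct approach is that it treats every $\ell$ uniformly, in particular the degenerate case where no isometric involution fixing $V_\ell$ pointwise is available.
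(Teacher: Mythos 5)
Your proof is correct and follows essentially the same route as the paper: both arguments integrate the first two geodesic equations to show that the horizontal velocity $(x',y')$ keeps a fixed direction, hence the geodesic never leaves the affine subspace. The only cosmetic difference is that the paper phrases this via the defining linear functionals of $\ell$ and a uniqueness argument for the linear ODE $\chi'=(2z'/z)\chi$, whereas you solve the equations explicitly as $(x',y')=z^2(v,w)$ and integrate.
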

\begin{proof}
Let us define $\ell$ as the set of solutions of a finite number of affine conditions of the form
\begin{equation}\label{eq:affine subspace}
\sum_{i=1}^{p-1} a_ix_i+\sum_{j=1}^{q} b_jy_j=c~.
\end{equation}
We claim that if $\gamma$ is a geodesic such that $\gamma'(0)$ is tangent to the subspace $V_\ell$, namely 
\begin{equation}\label{eq:initial condition}
\sum_{i=1}^{p-1} a_ix'_i(0)+\sum_{j=1}^{q} b_jy'_j(0)=0~,
\end{equation}
then $\gamma(t)$ satisfies \eqref{eq:affine subspace} for all times of definition. This clearly implies that $V_\ell$ is totally geodesic.

To show the claim, define the function 
$$\chi(t)=\sum_{i=1}^{p-1} a_ix_i'(t)+\sum_{j=1}^{q} b_jy_j'(t)~.$$
Taking a linear combination of the equations \eqref{geo}, $\chi$ satisfies the following ODE: $\chi'(t)=f(t)\chi(t)$, for $f(t)=-2z'(t)/z(t)$. By our hypothesis \eqref{eq:initial condition}, $\chi(0)=0$, hence $\chi\equiv 0$. This implies that $\gamma(t)$ identically satisfies \eqref{eq:affine subspace} and concludes the proof.
\end{proof}

\subsection{Totally geodesic hypersurfaces}

We now give the classification of totally geodesic submanifolds of codimension one. The general case (\emph{i.e.} in any codimension) will then follow in Theorem \ref{thm:tot_geo_submanifolds}.

\begin{proposition}\label{prop:tot_geo_hyper}
	The totally geodesic hypersurfaces of $\HH^{p,q}$ are precisely:
	\begin{enumerate}
		\item the vertical hyperplanes $V_{\mathcal{L}}$, for $\mathcal{L}$ an affine hyperplane in $\pd\HH^{p,q}$;
		\item the quadric hypersurfaces of the form 
		\begin{equation}\label{eq:quadric}
			\|x-x_0\|^2-\|y-y_0\|^2+z^{2}=c,\quad  c\in\R \tag{Q}
		\end{equation} 
	\end{enumerate}
	for some $(x_0,y_0)\in\pd\HH^{p,q}$. The hypersurfaces of the former type are degenerate if and only if $\mathcal{L}$ is degenerate in $\R^{p-1,q}$, and have signature $(m+1,n)$ where $(m,n)$ is the signature of $\mathcal{L}$. Those of the latter type are degenerate if and only if $c=0$, and have signature $(p-1,q-1)$ if $c=0$, $(p,q-1)$ if $c<0$, and  $(p-1,q)$ if $c>0$.
	\end{proposition}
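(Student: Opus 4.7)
My plan is to reduce the classification to the well-known description of totally geodesic hypersurfaces in the full model $\Hyp^{p,q}$, by using the isometric embedding $\iota_{p,q}$ of Proposition \ref{emb}. Every totally geodesic hypersurface of $\widetilde\Hyp^{p,q}$ is an intersection $v^\perp\cap\widetilde\Hyp^{p,q}$ for some nonzero $v\in\R^{p,q+1}$, so every maximal totally geodesic hypersurface of $\HH^{p,q}$ arises as a connected component of $\tilde\iota_{p,q}^{-1}(v^\perp)$ for a suitable $v$.

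To compute these preimages I would substitute the formulas for $\tilde\iota_{p,q}$ into $\langle \tilde\iota_{p,q}(x,y,z),v\rangle=0$. Multiplying through by $2z$ and writing $\|x\|^2-\|y\|^2$ for the indefinite quadratic form in the horizontal coordinates, one obtains
$$(v_p-v_{p+q+1})-(v_p+v_{p+q+1})(\|x\|^2-\|y\|^2+z^2)+2\sum_{i=1}^{p-1}v_ix_i-2\sum_{j=1}^{q}v_{p+j}y_j=0.$$
Set $b:=v_p+v_{p+q+1}$. When $b=0$ the equation is linear in $(x,y)$ with no $z$-dependence, hence its zero set is a vertical hyperplane $V_\mathcal{L}$, and varying $v$ produces every affine hyperplane $\mathcal{L}\subset\partial\HH^{p,q}$. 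When $b\neq 0$, dividing by $-b$ and completing the square in each $x_i$ and each $y_j$ rewrites the equation as \eqref{eq:quadric} with $(x_0,y_0)=\tfrac{1}{b}(v_1,\ldots,v_{p-1},v_{p+1},\ldots,v_{p+q})$ and
$$c=\frac{v_p-v_{p+q+1}}{b}+\frac{1}{b^2}\Big(\sum_{i=1}^{p-1}v_i^2-\sum_{j=1}^{q}v_{p+j}^2\Big)=\frac{\langle v,v\rangle}{b^2}.$$
This last identity, checked by a short direct expansion, is the crux of the argument.

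For the signatures: in the vertical case the induced metric is $(ds^2_\mathcal{L}+dz^2)/z^2$, so $V_\mathcal{L}$ has signature $(m+1,n)$ when $\mathcal{L}$ has signature $(m,n)$ in $\R^{p-1,q}$, and is degenerate iff $\mathcal{L}$ is. In the quadric case I would invoke the standard dichotomy for $v^\perp\cap\widetilde\Hyp^{p,q}$: a short calculation at a point $X$, analysing the $2$-plane $\mathrm{span}(X,v)$ and its orthogonal complement in $\R^{p,q+1}$, shows that the induced metric is non-degenerate of signature $(p-1,q)$ when $v$ is spacelike, non-degenerate of signature $(p,q-1)$ when $v$ is timelike, and has a $1$-dimensional radical with non-degenerate quotient of signature $(p-1,q-1)$ when $v$ is isotropic. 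Via the identity $c=\langle v,v\rangle/b^2$ these three cases correspond to $c>0$, $c<0$, and $c=0$ respectively.

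The main obstacle is the algebraic bookkeeping in the completion of squares and the verification of $c=\langle v,v\rangle/b^2$, both of which are routine once set up correctly. A minor subtlety is that certain quadrics of the form \eqref{eq:quadric} can meet $\{z>0\}$ in more than one connected component, but this does not affect the classification statement, which describes the hypersurfaces via their defining equations.
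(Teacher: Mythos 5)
Your argument is correct, but it takes a genuinely different route from the paper. The paper works entirely inside the half-space model: it first shows the quadrics \eqref{eq:quadric} are totally geodesic by sweeping them with curves lying in vertical $2$-planes and verifying $\nabla_{\gamma'}\gamma'\parallel\gamma'$ from the Christoffel symbols, and then proves exhaustiveness by an orthogonality argument in the conformal flat metric $\overline g_{p,q}$ (every normal direction $(u,v,w)$ is realised by exactly one hypersurface of the two listed types); the signatures are likewise read off conformally. You instead pull the classification back from the hyperboloid model: writing $\langle\tilde\iota_{p,q}(x,y,z),v\rangle=0$, splitting on $b=v_p+v_{p+q+1}$, and completing the square. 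Your key identity $c=\langle v,v\rangle/b^2$ is correct (indeed $(v_p-v_{p+q+1})/b=(v_p^2-v_{p+q+1}^2)/b^2$), and it buys something the paper's proof does not make explicit: a clean dictionary between the parameter $c$ and the causal type of $v$, which makes the signature statement and the degeneracy criterion immediate and also illuminates Remark \ref{rem:lightlike by embedding} and the boundary discussion of Section \ref{sec:bdy}. The trade-off is that your reduction leans on the classification of totally geodesic hypersurfaces of $\widetilde\Hyp^{p,q}$ as the sets $v^\perp\cap\widetilde\Hyp^{p,q}$ — standard, but not proved in this paper, and since hypersurfaces of $\HH^{p,q}$ are a priori incomplete you should say a word about why a connected totally geodesic hypersurface is globally (not just locally) contained in a single $v^\perp$ (identity principle along the connected submanifold); the paper's intrinsic route avoids this external input and reuses its conformal/sweeping techniques later. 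Your remark about possible disconnectedness of the quadrics in $\{z>0\}$ is a fair point that the paper's own statement glosses over as well.
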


See also Figure \ref{fig:quadrics} for some pictures in dimension 3.

\begin{figure}[htb]
\centering
\begin{minipage}[c]{.55\textwidth}
\centering
\includegraphics[height=5.1cm]{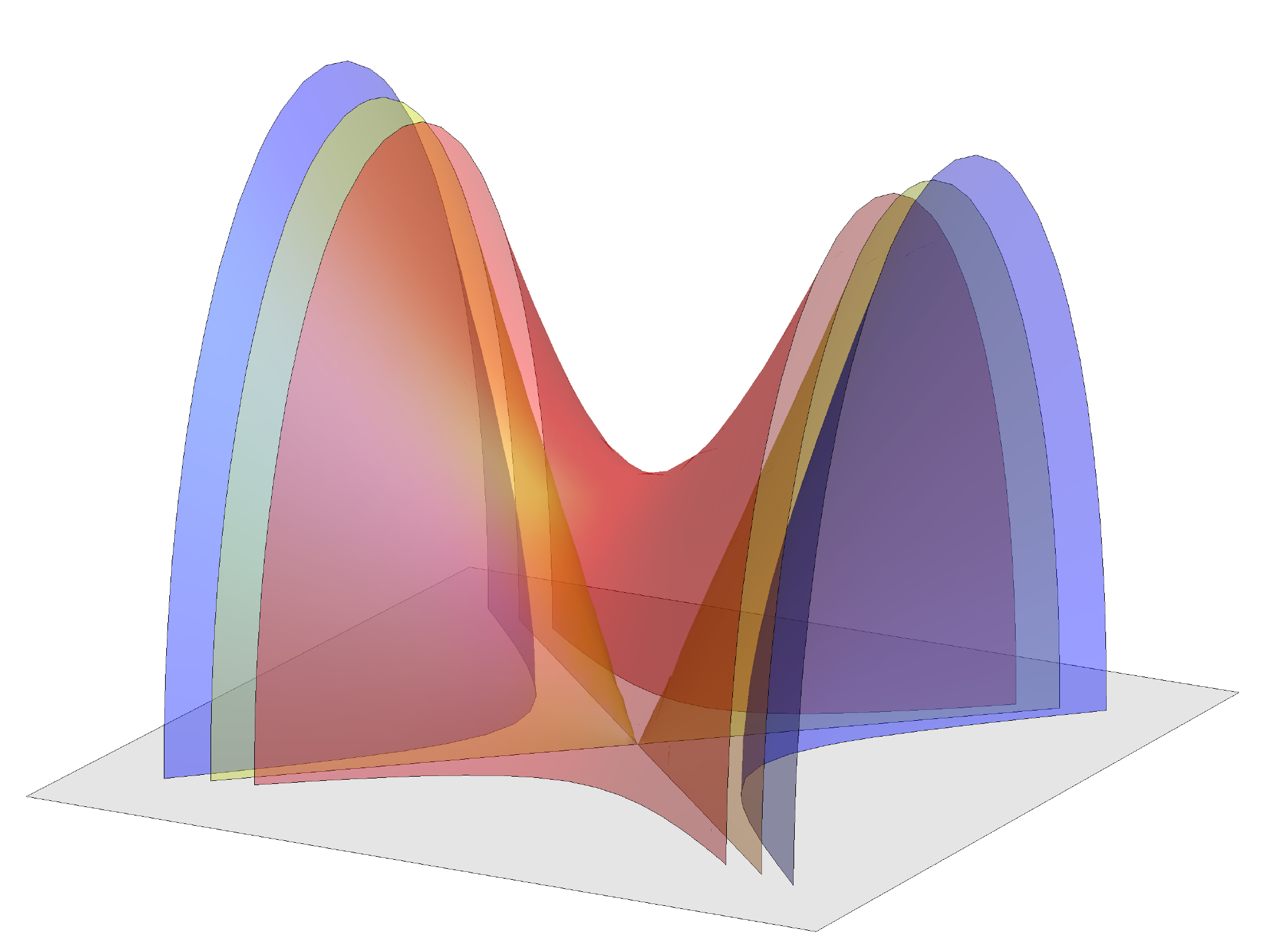}
\end{minipage}%
\begin{minipage}[c]{.45\textwidth}
\centering
\includegraphics[height=5.1cm]{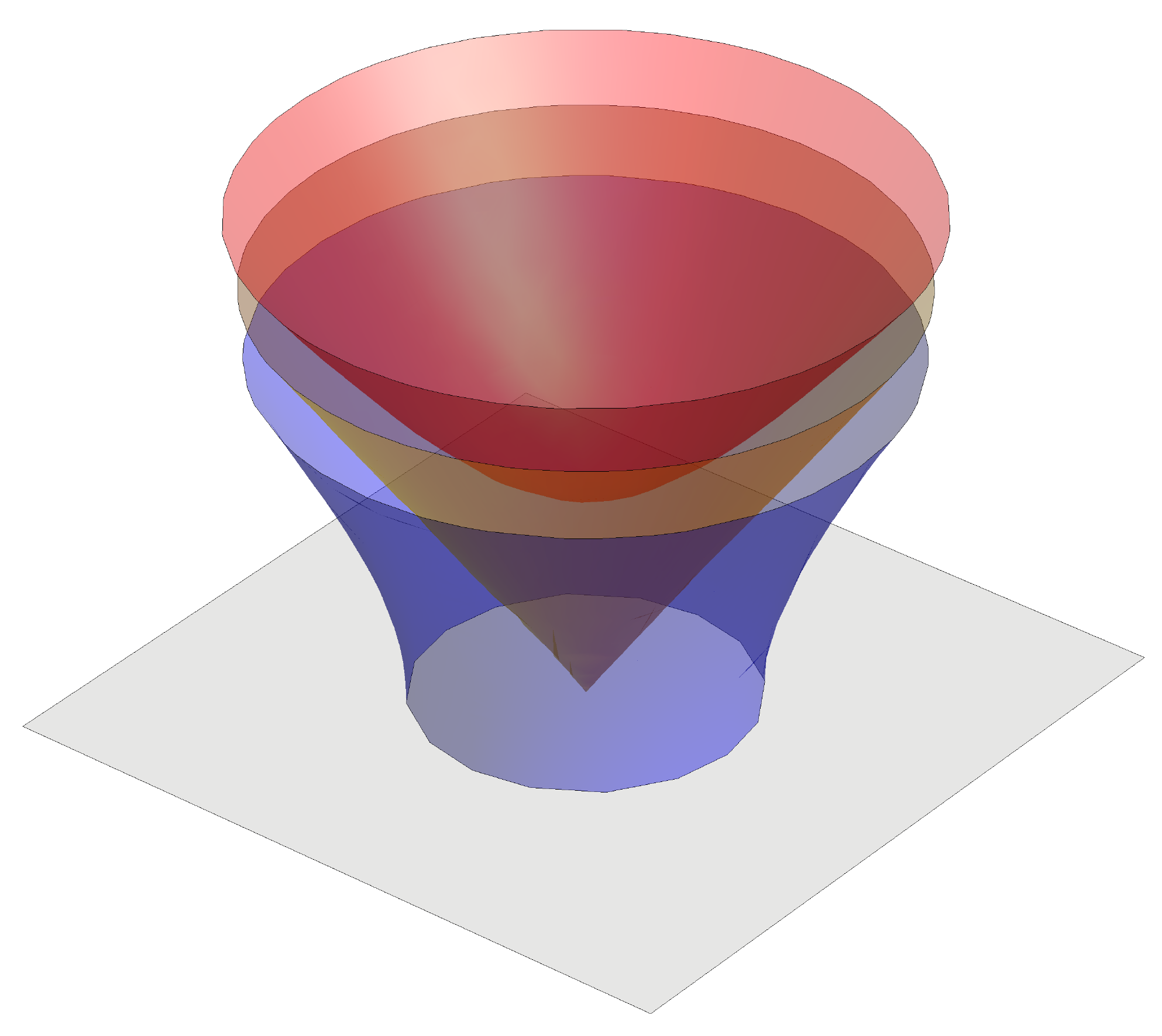}
\end{minipage}%
\caption{The totally geodesic quadric hypersurfaces in $\HH^{2,1}$ (left) and $\HH^{1,2}$ (right).}\label{fig:quadrics}
\end{figure}

\begin{proof}
	It has been proved in Proposition \ref{vert} that vertical hyperplanes are totally geodesic. To prove that the quadric hypersurfaces as in the statement are totally geodesic, we will show that the intersection of the quadric hypersurface with any vertical 2-plane $V_\ell$ (for $\ell$ a line) is a geodesic of $\HH^{p,q}$. This clearly implies that the hypersurface is totally geodesic, since any ambient geodesic that is tangent to the hypersurface at time zero remains in the hypersurface for all times.
	
	To see this, we parameterize such a curve as  
	$$\gamma(t)=(u_{0}+tu,v_0+tv,f(t)),$$
	where the function $f$ is determined by the quadric Equation \eqref{eq:quadric}, namely
	$$f(t)=\sqrt{c-\|x(t)-x_0\|^2+\|y(t)-y_0\|^2}~,$$
	which is defined for $t$ in some interval $I$. 

	As in \eqref{geo} one can compute 	
	\begin{equation}\label{eq:cov dev}
		\nabla_{\gamma'}\gamma'=\left(-\frac{2f'}{f}u,-\frac{2f'}{f}v,f''+\frac{\|u\|^{2}-\|v\|^{2}-|f'|^{2}}{f}\right).
	\end{equation}
Remarking that $f(t)^{2}=c-\|tu+u_0-x_0\|^{2}+\|tv+v_0-y_0\|^{2}$, differentiating twice one obtains
$$(f')^{2}+f''f=-\|u\|^{2}+\|v\|^{2}.$$
Then the last term in \eqref{eq:cov dev} becomes $-({2f'}/{f})f'$, that means $\nabla_{\gamma'}\gamma'=-({2f'}/{f})\gamma'$, \emph{i.e.} $\gamma$ in an unparameterized geodesic.

It only remains to show that these are all totally geodesic hypersurfaces. For this purpose, we choose any vector $(u,v,w)$ tangent to $\HH^{p,q}$ at a point $(x,y,z)$ and we show that there exists a totally geodesic hypersurface (which is necessarily unique) of the above two forms containing the point $(x,y,z)$ and whose tangent space is orthogonal to $(u,v,w)$. The key observation is that the orthogonality can be computed with respect to the flat metric 
$$\overline g_{p,q}=dx_1^2+\ldots+dx_{p-1}^2-dy_1^2-\ldots-dy_{q}^2+dz^2~,$$ namely by seeing the half-space as a subset of a pseudo-Euclidean space of signature $(p,q)$, since $\overline g_{p,q}$ is conformal to the metric $g_{p,q}$.

If $w=0$, then clearly the hypersurface we are looking for is $V_\mathcal{L}$, for $\mathcal{L}$ the affine subspace of $\R^{p-1}\oplus\R^q$ containing the point $(x,y)$ and whose underlying vector space is the orthogonal of $(u,v)$. So let us now assume $w\neq 0$. Let $(x_0,y_0,0)$ be the point of intersection of the line through $(x,y,z)$ having direction $(u,v,w)$ with $\pd\HH^{p,q}$. The quadric hypersurfaces of the form \eqref{eq:quadric} are precisely the sets of points at constant squared distance from $(x_0,y_0,0)$ for the conformal pseudo-Euclidean metric $\overline g_{p,q}$. Hence there is one quadric hypersurface that contains the point $(x,y,z)$, and it is orthogonal to the position vector $(x,y,z)-(x_0,y_0,0)$, which is proportional to $(u,v,w)$ by construction. The statement about the signature is easily checked, by using again the conformal pseudo-Euclidean metric $\overline g_{p,q}$, since the signature of a submanifold only depends on the conformal class of the metric. This concludes the proof.
\end{proof}

\begin{remark}\label{rem:lightlike by embedding}
For degenerate hypersurfaces, a direct computation, very similar to the proof of Theorem \ref{prop:horo} (but setting the constant $a=0$), shows that vertical hyperplanes and quadric hypersurfaces of the form \eqref{eq:quadric} with $c=0$ are the preimages under $\iota_{p,q}$ of totally geodesic degenerate hypersurfaces in $\Hyp^{p,q}$. Indeed, the latter are formed by the double quotient of the set the vectors $X\in\widetilde\Hyp^{p,q}$ such that $\langle X,V\rangle=0$, for some null vector $V$. If we pick $V$ of the form $(u,0,v,0)$ for $u\in\R^{p-1}$, $v\in\R^q$ and $\|u\|^2-\|v\|^2=0$, a direct computation shows that the preimage is 
	$|x\cdot u-y\cdot v|=0$. If instead we pick instead $V=(0,1,0,1)$, the preimage is $\|x\|^2-\|y\|^2+z^2=0$. Up to translation, this concludes the claim of the remark. See Theorem \ref{prop:horo} for more details.
\end{remark}

\subsection{The general classification}

We can finally state the classification result for totally geodesic submanifolds.

\begin{theorem}\label{thm:tot_geo_submanifolds}
	The totally geodesic submanifolds of $\HH^{p,q}$ are precisely:
	\begin{enumerate}
		\item the vertical subspaces,
		\item the intersections of quadric hypersurfaces of the form 
		\eqref{eq:quadric} with a vertical subspace.
	\end{enumerate}
\end{theorem}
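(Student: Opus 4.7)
The plan is to reduce the general classification to the hypersurface case of Proposition \ref{prop:tot_geo_hyper} by realising every totally geodesic submanifold as an intersection of totally geodesic hypersurfaces, and then analysing which intersections can actually arise. The easy inclusion -- that the submanifolds listed in (1) and (2) are totally geodesic -- is almost immediate: vertical subspaces of arbitrary codimension are covered by Proposition \ref{vert}, the quadric hypersurfaces by Proposition \ref{prop:tot_geo_hyper}, and the intersection of two totally geodesic submanifolds is itself totally geodesic wherever it is a submanifold, since any ambient geodesic tangent to the intersection is tangent to each factor and therefore stays in each.

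For the converse, given a totally geodesic submanifold $N\subset\HH^{p,q}$ of dimension $k$, I would fix a point $p_0\in N$, set $W=T_{p_0}N$, and choose $n-k$ hyperplanes $W_1,\dots,W_{n-k}$ of $T_{p_0}\HH^{p,q}$ in general position with $\bigcap_i W_i=W$ (where $n=p+q$). By Proposition \ref{prop:tot_geo_hyper}, applied at the base point $p_0$, for each $i$ there is a totally geodesic hypersurface $H_i$ through $p_0$ with $T_{p_0}H_i=W_i$, and $H_i$ is either a vertical hyperplane $V_{\mathcal{L}_i}$ or a quadric of the form \eqref{eq:quadric}. Since $N$ is totally geodesic and $W\subset W_i$, the image under the exponential map at $p_0$ of a small neighbourhood of $0\in W$ is contained in $H_i$, so $N\subset H_i$ near $p_0$. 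By transversality at $p_0$, the intersection $\bigcap_i H_i$ is locally a smooth $k$-dimensional submanifold with tangent space $W$ at $p_0$, and since a totally geodesic submanifold is locally determined by a point and a tangent subspace, $N=\bigcap_i H_i$ in a neighbourhood of $p_0$; the equality then propagates globally because each $H_i$ is a real-analytic subvariety of $\HH^{p,q}$ and $N$ is connected.

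It then remains to simplify the intersection. The intersection of several vertical hyperplanes $V_{\mathcal{L}_i}$ is itself a vertical subspace $V_{\mathcal{L}}$ with $\mathcal{L}=\bigcap_i\mathcal{L}_i$. For two distinct quadrics of the form \eqref{eq:quadric}, the difference of the two defining equations is affine in $(x,y)$, so their intersection coincides with the intersection of either quadric with a vertical hyperplane; iterating, the quadric factors in $\bigcap_i H_i$ collapse to a single quadric $Q$, and altogether $N$ has the form $V_{\mathcal{L}}$ or $V_{\mathcal{L}}\cap Q$, as required. The step I expect to require most care is the global equality $N=\bigcap_i H_i$, although this should follow cleanly from the algebraic nature of the $H_i$ combined with connectedness and real-analyticity of $N$.
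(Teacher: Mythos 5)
Your proof is correct, and it rests on the same two pillars as the paper's --- the codimension-one classification of Proposition \ref{prop:tot_geo_hyper} together with the fact that a maximal totally geodesic submanifold is determined by a point and a tangent subspace --- but the reduction is organised differently. The paper, given a tangent subspace $W$ at a point, directly exhibits a listed submanifold tangent to $W$: a vertical subspace if $\pd/\pd z\in W$, and otherwise the intersection of a \emph{single} quadric of the form \eqref{eq:quadric} (tangent to a codimension-one extension $W_1$ of $W$ transverse to the vertical direction) with a \emph{single} vertical subspace tangent to $W\oplus\pd/\pd z$; local uniqueness plus maximality then identifies any totally geodesic submanifold with that tangent data. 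You instead trap $N$ inside $n-k$ generic totally geodesic hypersurfaces and then collapse the intersection, which requires the observation --- not needed in the paper but correct --- that the difference of two defining equations of the form \eqref{eq:quadric} is affine in $(x,y)$, so that any finite intersection of listed hypersurfaces reduces to $V_{\mathcal L}$ or $V_{\mathcal L}\cap Q$. This buys a little extra (the listed family is closed under intersection) at the cost of the global-propagation step, which you handle by real-analyticity where the paper simply invokes maximality together with local uniqueness; note that analytic continuation only yields $N\subseteq\bigcap_i H_i$, and you still want maximality of $N$ to upgrade this to equality with (a component of) $V_{\mathcal L}\cap Q$. One point worth making explicit when you invoke Proposition \ref{prop:tot_geo_hyper}: its proof produces the hypersurface orthogonal to a prescribed tangent vector, so to realise an arbitrary hyperplane $W_i\subset T_{p_0}\HH^{p,q}$ you should observe that $W_i=(u,v,w)^{\perp}$ for some nonzero (possibly null) vector, which holds because the metric is non-degenerate.
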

\begin{proof}
The submanifolds in the statement are totally geodesic: for the first item this follows from Proposition \ref{vert}, while for the second item from Proposition \ref{prop:tot_geo_hyper} and the fact that the intersection of totally geodesic submanifolds is totally geodesic. To show that they exhaust the totally geodesic submanifolds, pick any linear subspace $W$ of $T_{(x,y,z)}\HH^{p,q}$. We will show that there exists a (necessarily unique) submanifold of the above two forms tangent to $W$ at $(x,y,z)$.  If $W$ contains the vertical direction, then we can write $W=W_0\oplus\partial/\partial z$, for $W_0$ the orthogonal complement of $\partial/\partial z$ in $W$. Denoting by $\ell$ the affine subspace through the point $(x,y)$ with underlying vector space $W_0$, $V_\ell$ is a totally geodesic subspace tangent to $W$ at $(x,y,z)$.

Now suppose that $W$ does not contain $\partial/\partial z$, and extend $W$ to a subspace $W_1$ of codimension one which is still transverse to the vertical direction.  By the proof of Proposition \ref{prop:tot_geo_hyper}, there exists a quadric hypersurface $Q$ which is tangent to $W_1$ at $(x,y,z)$. Also, as in the first part of this proof, we find a vertical subspace $V_\ell$ which is tangent to $W\oplus \partial/\partial z$. Then $Q\cap V_\ell$ is tangent to $W$ at $(x,y,z)$. This concludes the proof.
\end{proof}

\section{Geodesics}\label{sec:geo}
The next step in our analysis is the study of the geodesics of $\HH^{p,q}$. We will divide our analysis in three cases, namely lightlike, timelike and spacelike geodesics.

We recall that a parameterized geodesic is \emph{complete} if it is defined over $\R$. Similarly, a geodesic is complete \emph{on one side} if it admits a parameterization defined on a half-open interval $[a,+\infty)$. In the spacelike and timelike case, this corresponds to requiring that, for any parameterization, the integral of the square root of the absolute value of the quadratic form applied to the tangent vector is infinite over the corresponding end.

To simplify the statements of the following propositions, we refer to geodesics as unparametrized, \emph{i.e.} our statements are actually about the image of the parametrized curves.

\subsection{Lightlike geodesics}

We start by lightlike geodesics, namely those for which the tangent vector is isotropic for the metric tensor \eqref{eq:metric}.

\begin{proposition}\label{lightlike}
	Lightlike geodesics in $\HH^{p,q}$ are precisely the straight lines spanned by a lightlike vector. These are incomplete as they escape from compact sets of $\HH^{p,q}\cup\pd\HH^{p,q}$, unless they are contained in a horizontal hyperplane $\{z=c\}$.
\end{proposition}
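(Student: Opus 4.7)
The plan has two steps: identifying the image of a lightlike geodesic as a null straight line, and then analyzing completeness in the affine parameter. For the image, I would appeal to the conformal invariance of null pregeodesics. Since $g_{p,q} = z^{-2}\, \overline g_{p,q}$, where $\overline g_{p,q}$ is the flat pseudo-Euclidean metric of signature $(p,q)$ already introduced in the proof of Proposition \ref{prop:tot_geo_hyper}, and since conformally related pseudo-Riemannian metrics share the same unparameterized null geodesics, the lightlike pregeodesics of $\HH^{p,q}$ are precisely the intersections of $\HH^{p,q}$ with null affine lines of $\overline g_{p,q}$: straight lines with direction $(a,b,c)\in\R^{p-1}\oplus\R^q\oplus\R$ satisfying $\|a\|^2-\|b\|^2+c^2=0$. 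The same conclusion can alternatively be derived by direct integration of \eqref{geo}: rewriting the first two equations as $(x_i'/z^2)'=0=(y_j'/z^2)'$ gives $x_i'=c_iz^2$ and $y_j'=d_jz^2$ for constants, and the lightlike condition reduces to $(z')^2=(\|d\|^2-\|c\|^2)z^4$, whose solutions (together with the $x$ and $y$ components) trace Euclidean straight lines.

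For completeness, I would parametrize a lightlike line as $\gamma(t)=(x_0+ta,\,y_0+tb,\,z_0+tc)$ on the maximal interval where $z_0+tc>0$. In the horizontal case $c=0$, the lightlike relation forces $\|a\|^2=\|b\|^2$, so \eqref{geo} is satisfied with $t$ itself as an affine parameter defined on all of $\R$, hence the geodesic is complete. In the non-horizontal case $c\neq 0$, the parameter $t$ is no longer affine: the ODE analysis above gives $z(s)=1/(C-ks)$ in the affine parameter $s$, for some $k\neq 0$, which diverges to $+\infty$ as $s$ approaches a finite value. Thus the geodesic exits every compact subset of $\HH^{p,q}\cup\pd\HH^{p,q}$ at a finite value of $s$, establishing incompleteness.

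The main subtlety lies in the distinction between the Euclidean line parameter and the affine geodesic parameter in the non-horizontal case: the end on which the line approaches $\pd\HH^{p,q}$ corresponds to $s\to -\infty$ and is therefore complete, whereas the escape of the geodesic to $z=+\infty$ occurs in finite affine time. In the horizontal case the two parameters coincide, which is precisely why horizontal lightlike geodesics remain complete.
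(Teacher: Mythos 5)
Your proof is correct, but it takes a genuinely different route from the paper's. The paper simply exhibits the explicit parameterizations $\gamma(t)=(x_0,y_0,0)+(1/t)(u,v,w)$ for $w>0$ and $\gamma(t)=(x_0,y_0,z_0)+t(u,v,0)$ for $w=0$, observes that on a lightlike curve the system \eqref{geo} collapses to $(x'',y'',z'')=(2z'/z)(x',y',z')$, verifies both expressions against this equation, and then reads off completeness directly from the domains $(0,\infty)$ and $\R$ of these parameterizations (exhaustiveness follows from uniqueness of geodesics with prescribed initial data). You instead classify the images via conformal invariance of unparameterized null geodesics --- an argument the paper itself records only in the remark \emph{following} the proposition, with the reference to \cite{ghlf} --- and recover the affine parameter by integrating the first integrals $x_i'/z^2$, $y_j'/z^2$ of \eqref{geo}. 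Your route buys a derivation with no guessing (the first integrals force $z(s)=1/(C-ks)$ rather than merely confirming it), gives exhaustiveness for free since conformal metrics have the \emph{same} null pregeodesics, and isolates the one conceptual point cleanly: the Euclidean parameter is affine exactly in the horizontal case, which is why only those geodesics are complete. The paper's route is shorter and entirely self-contained. One step of yours is worth making explicit: when you combine the first two equations of \eqref{geo} with the null condition, you should note that the third equation is then automatic --- differentiating $(z')^2=(\|d\|^2-\|c\|^2)z^4$ gives $z''=2(z')^2/z$, which is precisely the third equation restricted to a null curve --- so that the curves you produce are indeed geodesics and not just solutions of a subsystem; with that observation added, the argument is complete.
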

\begin{proof}
Let $(u,v,w)$ be a lightlike vector, that is, $\|u\|^2-\|v\|^2+|w|^2=0$. Up to changing the sign, we can assume $w\ge 0$. We claim that if $w> 0$, then $$\gamma(t)=(x_0,y_0,0)+(1/t)(u,v,w)$$ is a parameterized geodesic; if instead $w=0$, then $$\gamma(t)=(x_0,y_0,z_0)+t(u,v,0)$$ is a parameterized geodesic. This clearly implies the statement: these are all the lightlike geodesics because, up to choosing the parameter $t$ suitably, one finds such a geodesic with tangent vector a multiple of $(u,v,w)$ through any point of $\HH^{p,q}$. Moreover, geodesics of the former type are defined on $(0,\infty)$, hence they are complete only when they approach $\pd\HH^{p,q}$, while those of the latter type are defined on $\R$.

The claim is an easy computation from Equations \eqref{geo}. Indeed, since $\gamma'$ is lightlike, we have $\|x'\|^{2}-\|y'\|^{2}=-|z'|^{2}$, hence the equations become 
\begin{equation}\label{eq:pippo}
(x'',y'',z'')=(2z'/z)(x',y',z')~,
\end{equation} and one immediately checks that both expressions above for $\gamma$ satisfy \eqref{eq:pippo}.
\end{proof}

See Figure \ref{fig:lightcone} to visualize the cone of lightlike geodesics emanating from a point in $\HH^{2,1}$ and $\HH^{1,2}$.

\begin{figure}[htb]
\centering
\begin{minipage}[c]{.5\textwidth}
\centering
\includegraphics[height=4.8cm]{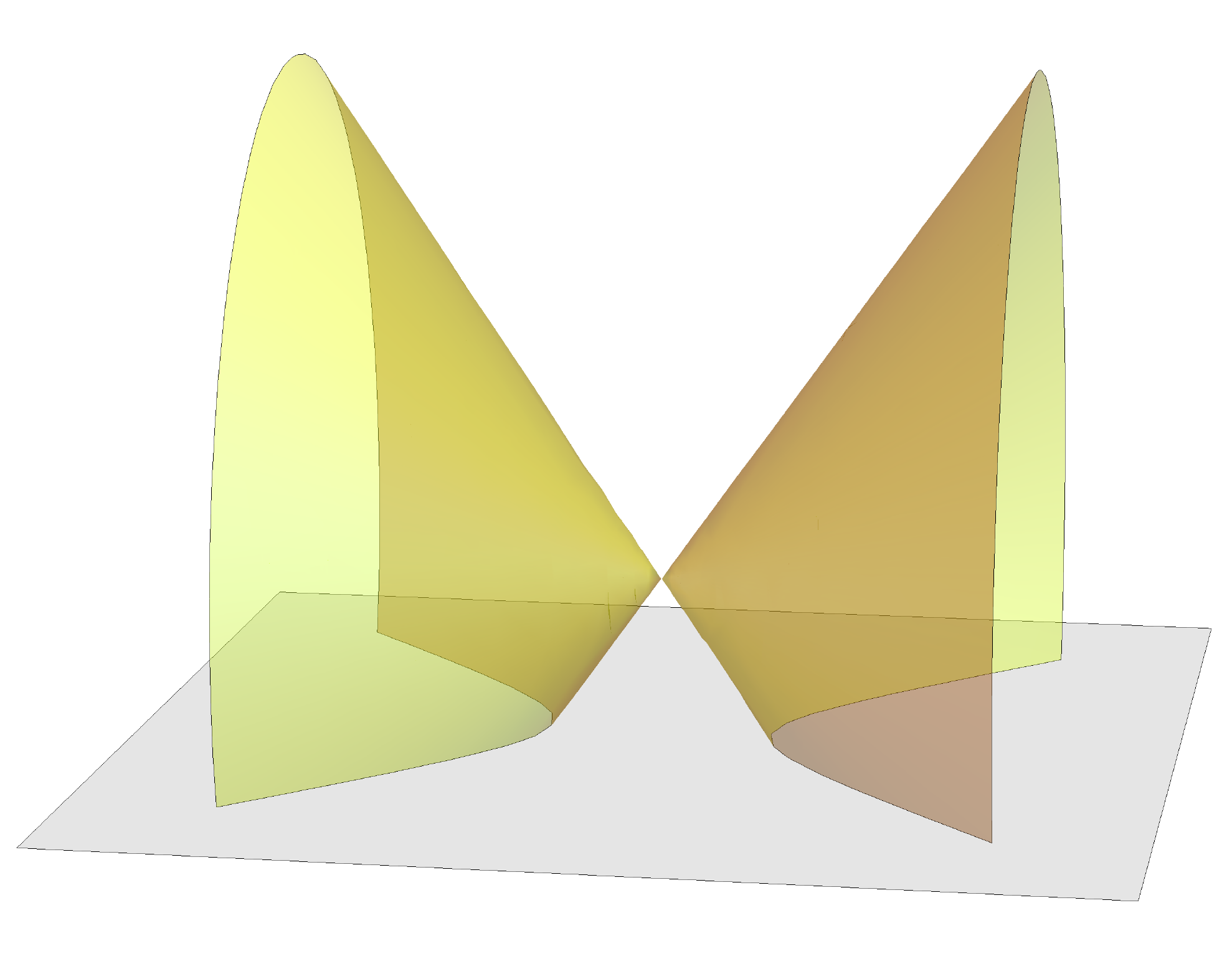}
\end{minipage}%
\begin{minipage}[c]{.5\textwidth}
\centering
\includegraphics[height=4.8cm]{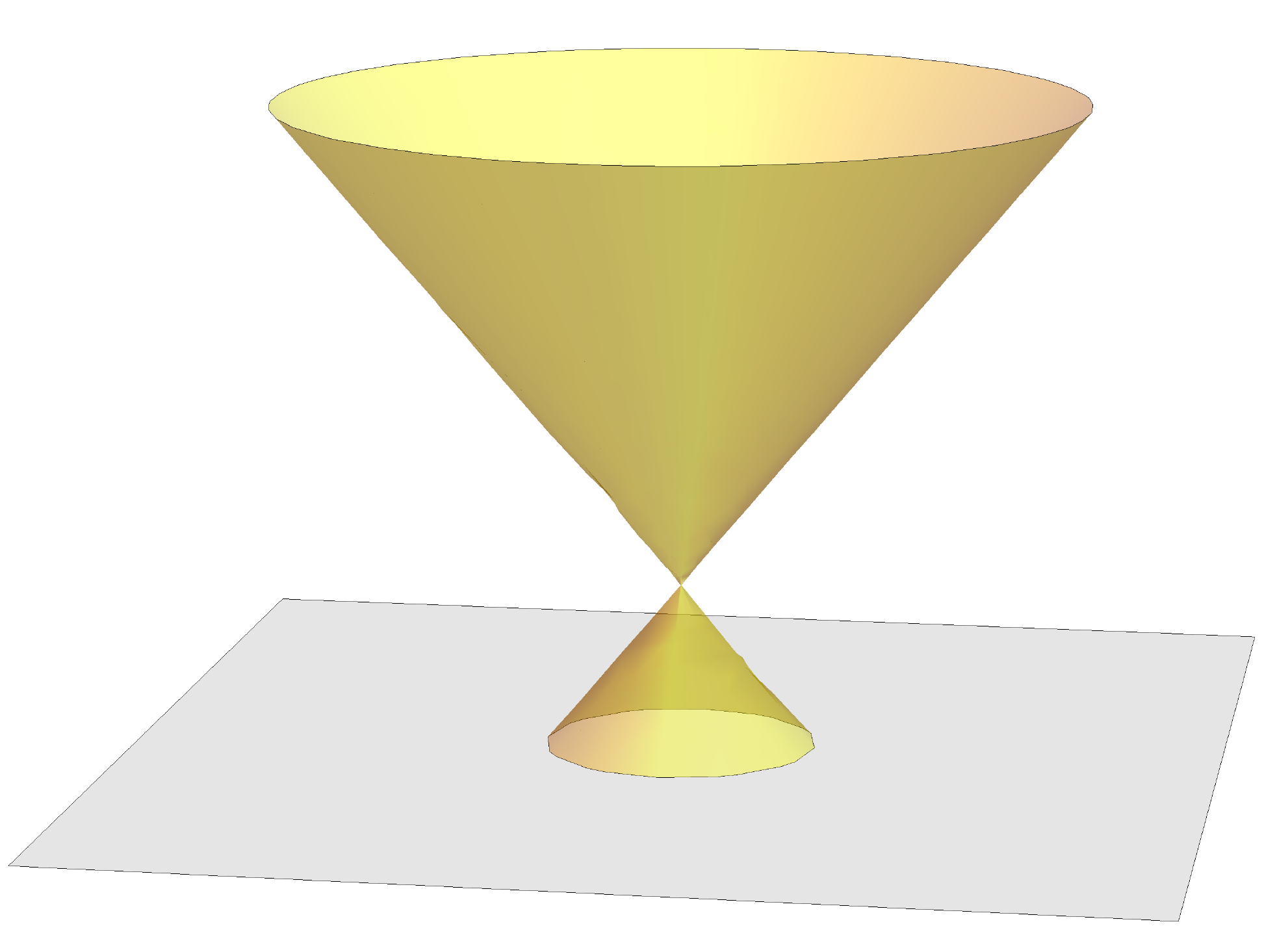}
\end{minipage}%
\caption{The lightcone from a point in $\HH^{2,1}$ (left) and $\HH^{1,2}$ (right).}\label{fig:lightcone}
\end{figure}

\begin{remark}
The fact that unparameterized lightlike geodesics are straight lines can also be proved by observing that $\HH^{p,q}$ is conformal to the upper half-space in $\R^{p,q}$ endowed with the restriction of the pseudo-Euclidean metric, and applying the fact that two conformal pseudo-Riemannian metrics have the same unparameterized lightlike geodesics (see \cite[Proposition 2.131]{ghlf}).
\end{remark}

\subsection{A preliminary computation}
As a consequence of the classification in Theorem \ref{thm:tot_geo_submanifolds}, geodesics are either straight vertical lines or conics. We will give here a more precise classification in terms of the  eccentricity, computed with respect to the Euclidean distances in $\HH^{p,q}\subset\R^{p+q}$. We start by a general computation that we will apply repeatedly.

\begin{lemma}\label{lemma:prel comp}
	Geodesics of $\HH^{p,q}$ are precisely:
	\begin{enumerate}
		\item vertical lines;
		\item conics of equation
		\begin{equation}\label{eq:conic0}
			\frac{\|u\|^{2}-\|v\|^{2}}{\|u\|^{2}+\|v\|^{2}}s^{2}+z^{2}+2As=C\,,\qquad A,C\in\R \tag{Q'}
		\end{equation} 
		with respect to Euclidean coordinates $(s,z)$ on a vertical 2-plane $V_\ell$, where the underlying vector space of $\ell$ is spanned by $(u,v)$.
	\end{enumerate}
\end{lemma}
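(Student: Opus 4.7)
The plan is to reduce the statement to Theorem \ref{thm:tot_geo_submanifolds}. Since (the image of) a geodesic is a $1$-dimensional totally geodesic submanifold, that classification forces it to be either a one-dimensional vertical subspace $V_\ell$ with $\ell$ a single point, namely a vertical line as in case (1), or the $1$-dimensional intersection $Q\cap V_\ell$ of a quadric hypersurface of the form \eqref{eq:quadric} with a vertical subspace $V_\ell$ whose underlying affine set $\ell$ is a line in $\R^{p-1}\oplus\R^q$. This already pins down the possible shapes; it remains to put the equation of $Q\cap V_\ell$ into the claimed normal form.

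For the second case, I would pick a vector $(u,v)\in\R^{p-1}\oplus\R^q$ spanning the direction of $\ell$ and a base point $(x_1,y_1)\in\ell$, and use the Euclidean arclength coordinate $s$ along $\ell$. A point of $V_\ell$ is then written as
$$\Big(x_1+\frac{s\,u}{\sqrt{\|u\|^2+\|v\|^2}},\,y_1+\frac{s\,v}{\sqrt{\|u\|^2+\|v\|^2}},\,z\Big).$$
Substituting into \eqref{eq:quadric} and expanding the two squared norms, the $s^2$-coefficient combines into $(\|u\|^2-\|v\|^2)/(\|u\|^2+\|v\|^2)$, the cross terms produce a linear coefficient proportional to $(x_1-x_0)\cdot u-(y_1-y_0)\cdot v$, and the constants in $x_1-x_0,\,y_1-y_0$ absorb into the right-hand side. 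This yields exactly the equation \eqref{eq:conic0} with
$$A=\frac{(x_1-x_0)\cdot u-(y_1-y_0)\cdot v}{\sqrt{\|u\|^2+\|v\|^2}},\qquad C=c-\|x_1-x_0\|^2+\|y_1-y_0\|^2.$$

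Conversely, every curve of type (1) or (2) already appears in Theorem \ref{thm:tot_geo_submanifolds} (directly for vertical lines, and for type (2) as the $1$-dimensional totally geodesic intersection of a quadric with a vertical $2$-plane), so the two families exhaust all geodesics.

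I do not expect any real obstacle: once Theorem \ref{thm:tot_geo_submanifolds} is invoked, what remains is a purely algebraic rewriting in the $(s,z)$-coordinates. The only mild subtlety is keeping track of the normalization: the denominator $\|u\|^2+\|v\|^2$ in \eqref{eq:conic0} appears precisely because $s$ is taken to be the Euclidean arclength rather than an affine parameter along $(u,v)$, which is what allows the formula to be written invariantly of the choice of spanning vector.
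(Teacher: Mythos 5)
Your proposal is correct and follows essentially the same route as the paper: invoke Theorem \ref{thm:tot_geo_submanifolds} to reduce to vertical lines and intersections $Q\cap V_\ell$, parameterize $\ell$ by Euclidean arclength in the direction $(u,v)$, and substitute into \eqref{eq:quadric} to read off $A$ and $C$. The only cosmetic difference is that the paper first translates so that $\ell$ passes through the origin, whereas you keep a general base point $(x_1,y_1)$; the resulting constants agree.
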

\begin{proof}
By Theorem \ref{thm:tot_geo_submanifolds}, geodesics are either
 vertical lines or obtained 
	 intersecting \eqref{eq:quadric} with the a 2-plane $V_\ell$. Up to a horizontal translation, we can assume that the line $\ell$ contains the origin, hence it can be  parameterized as
\begin{equation}\label{eq:xys}
			(x,y)=\frac{s}{\sqrt{\|u\|^{2}+\|v\|^{2}}}(u,v)~.
\end{equation}
A direct computation from \eqref{eq:quadric} yields the Equation \eqref{eq:conic0}
 with the constants $A=-(u\cdot x_0-v\cdot y_0)/\sqrt{\|u\|^{2}+\|v\|^{2}}$ and $C=c-\|x_0\|^{2}+\|y_0\|^{2}$.
 \end{proof}
 Observe that if $\|u\|^{2}-\|v\|^{2}\neq 0$, then replacing the coordinate $s$ by $s-s_0$ for a suitable choice of $s_0$, and relabeling the constant $C$, one then obtains the following equation: 
\begin{equation}\label{eq:conic}
			\frac{\|u\|^{2}-\|v\|^{2}}{\|u\|^{2}+\|v\|^{2}}s^{2}+z^{2}=C\,,\qquad C\in\R~. \tag{Q''}
		\end{equation} 

\begin{remark}\label{rmk:lightlike as intersections}
Lemma \ref{lemma:prel comp} provides yet another method to obtain the straight lines as lightlike geodesics. Consider Equation  \eqref{eq:conic} for $C=0$, namely $$\frac{\|u\|^{2}-\|v\|^{2}}{\|u\|^{2}+\|v\|^{2}}s^{2}+z^{2}=0$$
which is indeed a double line through the origin if $\|u\|^{2}-\|v\|^{2}<0$. An immediate computation shows that these lines are indeed lightlike. Similarly, considering \eqref{eq:conic0} with $C>0$, $A=0$ and $\|u\|^{2}-\|v\|^{2}=0$, we obtain a horizontal line with lightlike direction. A similar approach will be used in the next sections for the analysis of spacelike and timelike geodesics.
\end{remark}

\subsection{Timelike geodesics}

We now move on to the study of timelike geodesics. Let us remark that if a vector $(u,v,w)$ is tangent to a timelike curve at any point, then necessarily $\|u\|<\|v\|$ by the expression of the metric \eqref{eq:metric}. 

\begin{proposition}\label{timelike}
	Timelike geodesics in $\HH^{p,q}$ are exactly the branches of hyperbola with center on $\pd\HH^{p,q}$, which do not meet $\pd\HH^{p,q}$, of eccentricity 
	$$e_T(u,v)=\sqrt{1+\frac{\|u\|^2+\|v\|^2}{\|v\|^2-\|u\|^2}}~,$$ 
	where $(u,v,w)$ is a vector tangent to the geodesic at any point. These are incomplete on both sides.
\end{proposition}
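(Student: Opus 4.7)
The plan is to leverage Lemma \ref{lemma:prel comp}, which already reduces the classification to conics of the form \eqref{eq:conic0} inside a vertical $2$-plane $V_\ell$. First I would dispose of vertical lines, which are spacelike (since $g((0,0,w),(0,0,w)) = w^2/z^2 > 0$) and hence contribute nothing to the timelike classification. Any other timelike geodesic has tangent $(u,v,w)$ with $\|u\|^2 < \|v\|^2$, as follows from $g < 0$ and the expression \eqref{eq:metric}; moreover it lies in $V_\ell$ where $\ell$ is directed by $(u,v)$. Since the coefficient $(\|u\|^2 - \|v\|^2)/(\|u\|^2+\|v\|^2)$ of $s^2$ in \eqref{eq:conic0} is then strictly negative, the shift to \eqref{eq:conic} is legitimate, and the conic equation becomes $z^2 - \alpha s^2 = C$ with $\alpha = (\|v\|^2 - \|u\|^2)/(\|u\|^2 + \|v\|^2) > 0$.

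Next I would determine the admissible sign of $C$ by computing $g(\gamma',\gamma')$ along the natural parameterization $(x(s),y(s)) = s(u,v)/\sqrt{\|u\|^2 + \|v\|^2}$, $z(s) = \sqrt{C + \alpha s^2}$. Using $\|u\|^2 - \|v\|^2 = -\alpha(\|u\|^2+\|v\|^2)$, a short calculation gives
\begin{equation*}
g(\gamma'(s),\gamma'(s)) = -\frac{\alpha C}{(C + \alpha s^2)^2},
\end{equation*}
whose sign is opposite to that of $C$. Thus the curve is timelike throughout exactly when $C > 0$, in which case it is a single branch of a hyperbola entirely contained in $\{z > 0\}$ and centered at a point of $\partial\HH^{p,q}$; the case $C=0$ recovers lightlike lines (Remark \ref{rmk:lightlike as intersections}), and $C<0$ will give spacelike geodesics meeting $\partial\HH^{p,q}$ in the next subsection. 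The eccentricity is read off the standard form $z^2/C - s^2/(C/\alpha) = 1$: with $a^2 = C$ and $b^2 = C/\alpha$, one has $e^2 = 1 + b^2/a^2 = 1 + 1/\alpha$, which is precisely the square of the stated value $e_T(u,v)$. Conversely, any such hyperbolic branch is a geodesic by Theorem \ref{thm:tot_geo_submanifolds}, being the intersection of a quadric hypersurface \eqref{eq:quadric} with a vertical $2$-plane.

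Finally, for incompleteness on both sides, the proper-time length of the entire branch is
\begin{equation*}
\int_{-\infty}^{+\infty}\sqrt{|g(\gamma',\gamma')|}\,ds = \int_{-\infty}^{+\infty}\frac{\sqrt{\alpha C}}{C + \alpha s^2}\,ds = \pi,
\end{equation*}
which is finite (the integrand decays like $1/s^2$ at infinity). Reparameterizing by proper time therefore gives an affine parameter ranging over a bounded interval, proving incompleteness at both ends. The only non-routine point, and the step I expect to be the mildest obstacle, is matching the sign of $C$ to the causal type of the curve: once one notices that $g(\gamma',\gamma')$ has constant sign equal to $-\mathrm{sgn}(C)$ along the entire conic, both the geometric description and the eccentricity formula follow from elementary conic-section algebra.
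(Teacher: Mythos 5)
Your proposal is correct and follows essentially the same route as the paper: reduce to the conic equation \eqref{eq:conic} via Lemma \ref{lemma:prel comp}, identify $C>0$ as the timelike case, read off the eccentricity from the standard form, and conclude incompleteness from a total proper time equal to $\pi$. The only (minor) difference is that you compute $g(\gamma',\gamma')=-\alpha C/(C+\alpha s^2)^2$ explicitly along the whole conic, which simultaneously settles the causal type for all signs of $C$ and evaluates the length integral directly, whereas the paper checks the causal character at the minimum of the $z$-coordinate and reduces the length computation to the model hyperbola $y_1=\sinh t$, $z=\cosh t$ via the transitive action of $G$.
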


See also Figure \ref{fig:geodesics}. 
\begin{proof}
    We will consider timelike geodesics as the intersections of quadric hypersurfaces with a vertical plane, as in Equation \eqref{eq:conic0}. In order to get a timelike geodesic, the vertical plane is necessarily of signature $(1,1)$, hence  $\|u\|<\|v\|$, in which case we can reduce to Equation \eqref{eq:conic}. As we observed in Remark \ref{rmk:lightlike as intersections}, if $C=0$ then Equation \eqref{eq:conic} gives a pair of lightlike lines with the same endpoint on $\partial\HH^{p,q}$.
	If $C<0$, we obtain a pair of hyperbolas meeting $\pd\HH^{p,q}$ orthogonally. Since the half-space metric is conformal to the pseudo-Euclidean metric on $\R^{p+q}$, these are spacelike (they tend to be vertical as they approach $\pd\HH^{p,q}$). 
	
	We are left with the case of $C>0$, which gives indeed a hyperbola that does not meet $\pd\HH^{p,q}$. These are easily seen to be timelike, since the tangent vector at the minimum point of the $z$-coordinate along the hyperbola is proportional to $(u,v,0)$, which is timelike by our initial assumption $\|u\|<\|v\|$. The eccentricity is $e_T(u,v)$.
	
	It only remains to show that these are incomplete. First, observe that if $H$ and $H'$ are two such hyperbolas (considered as a subset of $\HH^{p,q}$), then there is an element of $G$ that mapping $H$ to $H'$. Indeed, one can use a translation and a dilatation to map the minimum of the $z$-coordinate on $H$ to that on $H'$. Composing with an isometry of the form $(x,y,z)\mapsto(A(x,y),z)$, one can then map the tangent vector to the tangent vector, and this concludes the claim.
	
	To show incompleteness, it thus suffices to consider the hyperbola $\gamma$ parameterized by 
	$	y_{1}(t)=\sinh(t)$, $z(t)=\cosh(t)$, and all other coordinates identically zero. 
Its length is
 $$L(\gamma)=\int_{-\infty}^{+\infty}\sqrt{|g_{p,q}(\gamma'(t),\gamma'(t))|}dt=\int_{-\infty}^{+\infty}\frac{1}{\cosh(t)}dt=\pi~.$$
	Then all timelike geodesics are incomplete on both sides.
\end{proof}

\begin{figure}
\begin{center}
 \includegraphics[height=5cm]{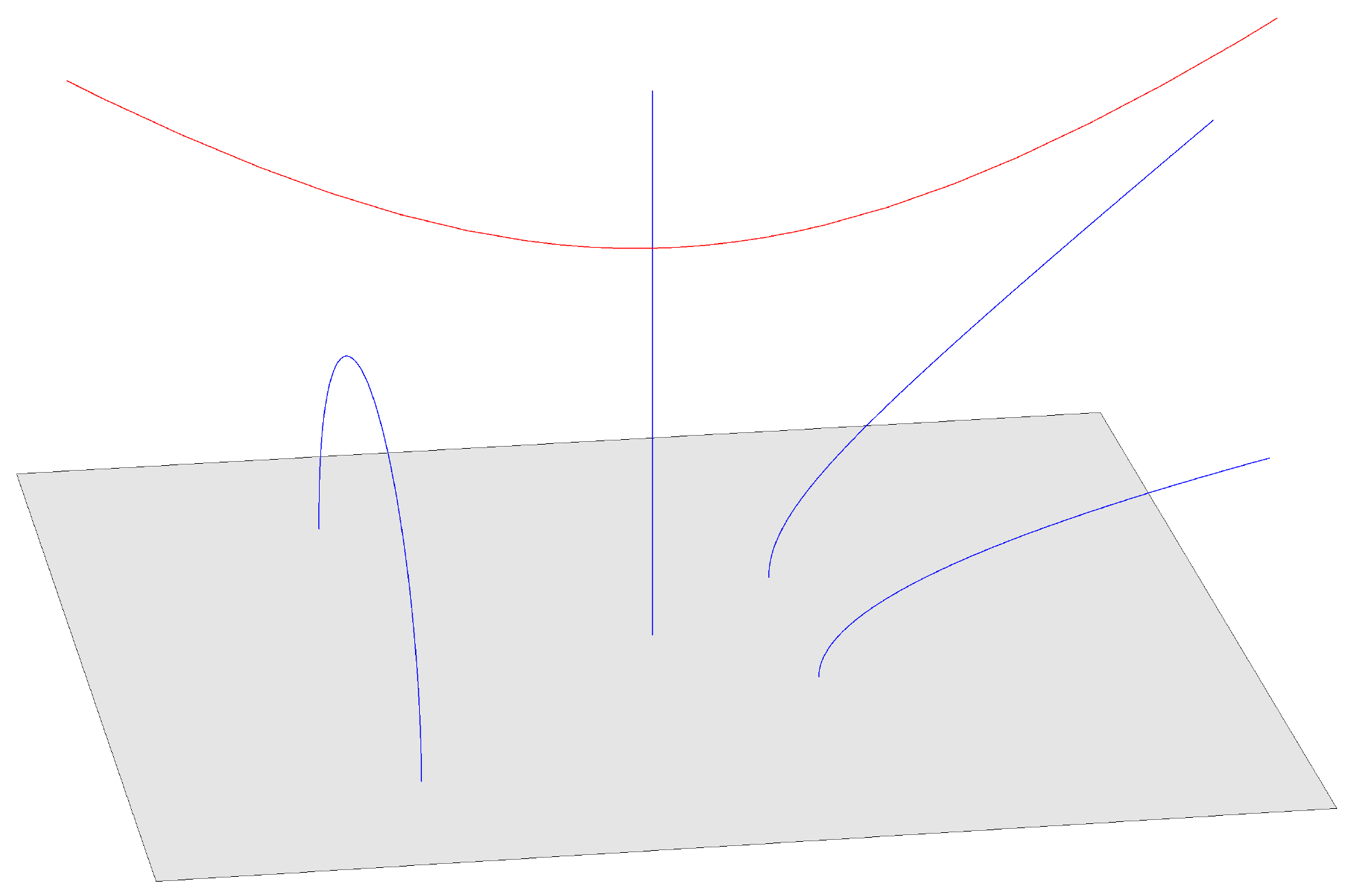}
\end{center}
\caption{A timelike geodesic (in red) and the four types of spacelike geodesics (blue).}\label{fig:geodesics}
\end{figure}

\subsection{Spacelike geodesics}

Finally, we conclude by the analysis of spacelike geodesics. See again Figure \ref{fig:geodesics}.

\begin{proposition}\label{prop:spacelike geo1}
Spacelike geodesics in $\HH^{p,q}$ are exactly of one of the following types:
\begin{enumerate}
\item A vertical straight line;
\item A half-ellipse with foci on $\pd\HH^{p,q}$, of eccentricity $e_S(u,v)$ for $\|u\|>\|v\|$;
\item A parabola with vertex and focus on $\pd\HH^{p,q}$, for $\|u\|=\|v\|$;
\item Half of a branch of hyperbola with foci on $\pd\HH^{p,q}$, meeting $\pd\HH^{p,q}$, of eccentricity $e_S(u,v)$, for $\|u\|<\|v\|$;
\end{enumerate}
where
$$e_S(u,v)=\sqrt{1+\frac{\|v\|^2-\|u\|^2}{\|u\|^2+\|v\|^2}}$$ 
and $(u,v,w)$ is a vector tangent to the geodesic at any point. The first three types are complete, while the fourth type is incomplete as it escapes from compact sets of $\HH^{p,q}\cup\pd\HH^{p,q}$.

\end{proposition}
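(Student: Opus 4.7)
I would mimic the strategy used for the timelike case, invoking the totally geodesic classification (Theorem \ref{thm:tot_geo_submanifolds}) and the reduction in Lemma \ref{lemma:prel comp}, and then sort out which conics are spacelike and compute their eccentricities. A geodesic is either a vertical straight line, which is always spacelike because $\partial/\partial z$ has squared norm $1/z^2>0$, giving type (1); or it lies in a vertical $2$-plane $V_\ell$ with underlying direction spanned by $(u,v)$, and is the intersection of $V_\ell$ with a quadric \eqref{eq:quadric}. In the latter case I case-split on the sign of $\|u\|^2-\|v\|^2$, corresponding to the signature of the plane $V_\ell$.

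When $\|u\|^2>\|v\|^2$, the plane $V_\ell$ is Riemannian, equation \eqref{eq:conic} has positive coefficient for $s^2$, and the only connected locus with $z>0$ is a half-ellipse, which must be spacelike since $V_\ell$ is Riemannian — this is type (2). When $\|u\|^2<\|v\|^2$, the plane is Lorentzian and \eqref{eq:conic} yields a hyperbola; here the case $C>0$ was already identified in Proposition \ref{timelike} as the timelike branch, the case $C=0$ gives the lightlike lines of Remark \ref{rmk:lightlike as intersections}, and the remaining case $C<0$ is the one that meets $\partial\HH^{p,q}$ transversally. Its half with $z>0$ is a half-branch of hyperbola, necessarily spacelike (since the other two causal types are already accounted for), giving type (4). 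When $\|u\|^2=\|v\|^2$ the $s^2$-coefficient in \eqref{eq:conic0} vanishes, leaving $z^2+2As=C$; this is a half-parabola whenever $A\neq 0$ (the case $A=0$ being the horizontal lightlike line of Remark \ref{rmk:lightlike as intersections}), and a quick tangent-vector check — say at the vertex, where the tangent is proportional to $(0,0,1)$ — confirms it is spacelike, giving type (3). The eccentricities of the ellipse and hyperbola follow from the normal form $as^2+z^2=C$ with $a=(\|u\|^2-\|v\|^2)/(\|u\|^2+\|v\|^2)$ by the standard formulas $e^2=1-a$ (ellipse, $0<a<1$) and $e^2=1-a$ (hyperbola, $a<0$), which in both instances equal $1+(\|v\|^2-\|u\|^2)/(\|u\|^2+\|v\|^2)=e_S(u,v)^2$.

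For completeness, I exploit the transitivity of $G$: in each of the four types, any two geodesics of that type are $G$-equivalent (by composing translations, dilations, and an element of $\mathrm{O}(p-1,q)$ sending tangent vectors to tangent vectors), so it suffices to check one representative. Taking $\gamma(t)=(0,0,e^t)$ for type (1), $\gamma(t)=(\sin t,0,\cos t)$ for type (2), and a parabola such as $\gamma(t)=(t,t,1)$ for type (3) (using that $(1,1,0)$ is lightlike so that the constraint $\|u\|^2=\|v\|^2$ holds), the length integral $\int\sqrt{|g_{p,q}(\gamma',\gamma')|}\,dt$ diverges at each end, giving completeness. For type (4) a representative is $\gamma(t)=(\cosh t,\sinh t,0)$ parameterized in a suitable way meeting $\partial\HH^{p,q}$, or more conveniently one parameterizes so that the length integral near the endpoint on $\partial\HH^{p,q}$ is controlled by $\int dz/z$ or similar, exhibiting escape from compact sets of $\HH^{p,q}\cup\partial\HH^{p,q}$ in finite or infinite affine parameter but with the geodesic trajectory reaching $\partial\HH^{p,q}$. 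The main obstacle I anticipate is keeping the bookkeeping of which sign of $C$ matches which causal character in the Lorentzian-plane case, and ensuring the parabola case (where Lemma \ref{lemma:prel comp} cannot be reduced to \eqref{eq:conic} because the $s^2$-coefficient vanishes) is handled directly from \eqref{eq:conic0}.
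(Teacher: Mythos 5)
Your overall strategy coincides with the paper's: reduce to Lemma \ref{lemma:prel comp}, split on the sign of $\|u\|^2-\|v\|^2$, read off the conic type and eccentricity from the normal form $as^{2}+z^{2}=C$ with $a=(\|u\|^{2}-\|v\|^{2})/(\|u\|^{2}+\|v\|^{2})$, and then use the transitivity of $G$ to check completeness on one representative per type. The classification and the eccentricity computation $e^{2}=1-a=e_S(u,v)^{2}$ are correct. The gaps are in the completeness checks. Your representative for type (3), $\gamma(t)=(t,t,1)$, is not a parabola: it is a horizontal straight line with lightlike tangent $(1,1,0)$, so $g_{p,q}(\gamma',\gamma')\equiv 0$ and the length integral vanishes instead of diverging. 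A correct representative is the parabola of Equation \eqref{eq:parabola parameterized}, e.g.\ $x_1(t)=y_1(t)=t^{2}/2$, $z(t)=t$, for which $g_{p,q}(\gamma',\gamma')=1/t^{2}$ and $\int dt/t$ diverges at both ends $t\to 0^{+}$ and $t\to+\infty$; the paper instead solves the ODE $z''=(z')^{2}/z$ to get $z(t)=z_0e^{wt/z_0}$, defined for all affine times. Relatedly, checking spacelikeness ``at the vertex'' is checking at a point of $\pd\HH^{p,q}$, which is not on the geodesic; the clean argument is that along the parabola $z'\neq 0$ while the horizontal direction $(u,v)$ is null, so the squared norm of the tangent is $|z'|^{2}/z^{2}>0$ everywhere.

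For type (4), the representative $\gamma(t)=(\cosh t,\sinh t,0)$ has $z\equiv 0$ and so lies in $\pd\HH^{p,q}$, not in $\HH^{p,q}$; you want $y_1(t)=\cosh t$, $z(t)=\sinh t$ with all other coordinates zero. More seriously, your argument locates the incompleteness at the wrong end. With that parameterization $\sqrt{g_{p,q}(\gamma',\gamma')}=1/\sinh t$, so $L(\gamma|_{[t_0,t_1]})=\left[\log\tanh(t/2)\right]_{t_0}^{t_1}$: the length diverges as $t_0\to 0^{+}$, i.e.\ the geodesic is \emph{complete} towards its endpoint on $\pd\HH^{p,q}$, and it is \emph{incomplete} as $t_1\to+\infty$, where $z\to\infty$ and the curve leaves every compact subset of $\HH^{p,q}\cup\pd\HH^{p,q}$. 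A trajectory converging to a point of $\pd\HH^{p,q}$ does not escape from compact sets of the closed half-space, and your proposed estimate (``the length integral near the endpoint on $\pd\HH^{p,q}$ is controlled by $\int dz/z$'') would in fact prove completeness at that end while saying nothing about the upper end of the half-branch, which is where the incompleteness actually occurs. This part needs to be redone along the lines above.
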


Before the proof, we observe that in particular all spacelike geodesics meet $\pd\HH^{p,q}$ at right angles with respect to the conformal metric $$\overline g_{p,q}=dx_1^2+\ldots+dx_{p-1}^2-dy_1^2-\ldots-dy_{q}^2+dz^2,$$ which extends over the horizontal hyperplane.
\begin{proof}

		The first type follows from the first point of Lemma \ref{lemma:prel comp}. The arc-length parameterization is $\gamma(t)=(x_0,y_0,e^t)$, which is defined for all times. Let us now consider the second point in Lemma \ref{lemma:prel comp}, by distinguishing three cases according to the sign of $\|u\|^2-\|v\|^2$.
		
		If $\|u\|<\|v\|$, we have already seen in the proof of Proposition \ref{timelike} that Equation \eqref{eq:conic} gives a spacelike geodesic if and only if $C<0$. From the equation, in this case the geodesic is a branch of hyperbola that meets $\partial\HH^{p,q}$ orthogonally, of eccentricity $e_S(u,v)$. To show that it has infinite length when approaching $\pd\HH^{p,q}$ and finite length at the other end, we can assume that $(u,v)=(0,\pd/\pd y_1)$, up to isometry. Up to a translation, we can assume that the curve is parameterized by $y_1(t)=\cosh(t)$, $z(t)=\sinh(t)$, and all the other coordinates are identically zero. A direct computation shows that its length is 
		 $$L(\gamma|[t_0,t_1])=\int_{t_0}^{t_1}\sqrt{g(\gamma'(t),\gamma'(t))}dt=\int_{t_0}^{t_1}\frac{1}{\sinh(t)}dt=\left[\log(\tanh(t/2)\right]_{t_0}^{t_1}~.$$
		 Therefore it is complete as $t_0\to 0^+$, and incomplete as $t_1\to+\infty$.

		If $\|u\|>\|v\|$, then (\ref{eq:conic}) is the equation of an ellipse (for $C>0$) that meets $\partial\HH^{p,q}$ orthogonally, with eccentricity $e_S(u,v)$. We remark that it lies in a positive definite vertical 2-plane, which is isometric to $\HH^{2,0}\cong\Hyp^{2}$, and so is complete.
		
		Finally, when $\|u\|=\|v\|$, \eqref{eq:conic0} becomes $z^{2}+2As=C$.
		We remark that necessarily $A\neq 0$, because otherwise we would obtain a lightlike line as observed in Remark \ref{rmk:lightlike as intersections}. Namely, we obtained a parabola with vertex and focus both on $\pd\HH^{p,q}$.
		To see that it is complete, observe that the last equation of \eqref{eq:cov dev} becomes the ODE $z''=(z')^2/z$, whose solution is $z(t)=z_0 e^{wt/z_0}$, then it is defined for all times. The parameterized geodesic is obtained by setting $s(t)=(1/2A)(C-z^2)$; then $(x(t),y(t))$ is expressed from $s(t)$ as a function of $t$ by Equation \eqref{eq:xys}. This shows that $\gamma$ is complete. 
\end{proof}

As a consequence of this analysis of geodesics, we now have all the tools to prove that the group $G$ introduced in Lemma \ref{lemma:G} is actually the full isometry group of the half-space model $\HH^{p,q}$. However, we postpone the proof to Section \ref{sec:isometry} (see Theorem \ref{cor:G}), where isometries are discussed in greater detail.

\section{The boundary at infinity}\label{sec:bdy}
In this section we will study the boundary at infinity of the pseudo-hyperbolic space
$\HH^{p,q}$ in the half-space model. We first show that the embedding $\iota_{p,q}$, introduced in Proposition \ref{emb}, extends to a non-surjective embedding of $\pd\HH^{p,q}$ into $\pd_\infty\Hyp^{p,q}$; we then describe the missing points and the topology of the boundary in terms of lightlike cones and hyperplanes.

\subsection{The extended embedding}

Recall that Proposition \ref{emb} provided an isometric embedding $\iota_{p,q}$ of $\HH^{p,q}$ into $\Hyp^{p,q}$.

\begin{proposition}
	The isometric embedding $\iota_{p,q}\colon\HH^{p,q}\hookrightarrow\Hyp^{p,q}$ defined in Proposition \ref{emb} extends to an embedding $\pd\HH^{p,q}\hookrightarrow\pd_\infty{\Hyp^{p,q}}$.
\end{proposition}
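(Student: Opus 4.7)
The plan is to rescale the auxiliary map $\tilde\iota_{p,q}\colon\HH^{p,q}\to\widetilde\Hyp^{p,q}\subset\R^{p,q+1}$ by a factor of $z$, so as to remove the pole at $\{z=0\}$, and then pass to projective space. Set
\[
Z(x,y,z):=\left(x_1,\ldots,x_{p-1},\tfrac{1-h(x,y)-z^{2}}{2},y_1,\ldots,y_{q},\tfrac{1+h(x,y)+z^{2}}{2}\right)\in\R^{p,q+1},
\]
which is polynomial in $(x,y,z)$ and satisfies $Z(x,y,z)=z\,\tilde\iota_{p,q}(x,y,z)$ for $z>0$. Since the sum of its $p$-th and $(p+q+1)$-th coordinates equals $1$ identically, $Z$ never vanishes on $\overline{\HH^{p,q}}$, so $(x,y,z)\mapsto[Z(x,y,z)]\in\R\mathrm{P}^{p+q}$ is well-defined and continuous on $\overline{\HH^{p,q}}$, and for $z>0$ it agrees with the composition of $\iota_{p,q}$ with the canonical inclusion $\Hyp^{p,q}\hookrightarrow\R\mathrm{P}^{p+q}$ (the quotient by $\pm\mathrm{Id}$ being the same as projectivization on $\widetilde\Hyp^{p,q}$). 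I would then define the boundary extension by $\hat\iota_{p,q}(x,y,0):=[Z(x,y,0)]$.

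Next I would verify that $\hat\iota_{p,q}(\partial\HH^{p,q})\subset\partial_{\infty}\Hyp^{p,q}$ through the direct computation
\[
\langle Z(x,y,0),Z(x,y,0)\rangle=h(x,y)+\tfrac{(1-h(x,y))^{2}-(1+h(x,y))^{2}}{4}=h(x,y)-h(x,y)=0,
\]
which places the image of the boundary in the projective null cone. Since the sign of $\langle\cdot,\cdot\rangle$ is invariant under nonzero rescaling, it is well defined on projective classes; and since $\langle Z(x,y,z),Z(x,y,z)\rangle=-z^{2}<0$ for $z>0$, the images of $\HH^{p,q}$ and of $\partial\HH^{p,q}$ are automatically disjoint, so the extended map remains injective as soon as injectivity on $\partial\HH^{p,q}$ is established.

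The last two steps (injectivity on $\partial\HH^{p,q}$ and continuity of the inverse) are both handled by exploiting the normalization $X_{p}+X_{p+q+1}=1$: every projective class in $\hat\iota_{p,q}(\partial\HH^{p,q})$ has a unique representative satisfying this relation (because $X_{p}+X_{p+q+1}$ does not vanish on the image), from which one reads off $x_{i}=X_{i}$ and $y_{j}=X_{j+p}$. Injectivity on the boundary is then immediate, and the explicit recipe gives a continuous inverse on the image, so that $\hat\iota_{p,q}|_{\partial\HH^{p,q}}$ is a topological embedding into $\partial_{\infty}\Hyp^{p,q}$. I do not expect any genuine obstacle: the only non-formal input is the identity $\langle Z(x,y,0),Z(x,y,0)\rangle=0$, and everything else is elementary linear algebra in $\R\mathrm{P}^{p+q}$.
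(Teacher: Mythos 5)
Your proposal is correct and follows essentially the same route as the paper: clear the $1/z$ factor to obtain a polynomial representative, pass to $\R\mathrm{P}^{p+q}$, check that $\langle Z(x,y,0),Z(x,y,0)\rangle=0$, and use the normalization $X_p+X_{p+q+1}=1$ to control the image. Your injectivity argument via that normalization is in fact slightly cleaner than the paper's (which solves a small quadratic system for the scaling factor $\lambda$), but it is the same idea the paper itself deploys immediately afterwards to identify the image of $\pd\HH^{p,q}$ with $\pd_\infty\Hyp^{p,q}\cap\mathrm{P}\{X_p+X_{p+q+1}\neq 0\}$.
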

\begin{proof}
	Consider the embedding $\iota_{p,q}$ as a map from $\HH^{p,q}$ to the projective space $\R\mathrm P^{p+q}$. From Proposition \ref{emb}, it has the expression
		\begin{align*}
		\iota_{p,q}(x,y,z)&=\left[\frac{x}{z}:\frac{1-h(x,y)-z^2}{2z}:\frac{y}{z}:\frac{1+h(x,y)+z^2}{2z}\right]=\\
		&=\left[x:\frac{1-h(x,y)-z^2}{2}:y:\frac{1+h(x,y)+z^2}{2}\right],
	\end{align*}
hence it extends to $\pd\HH^{p,q}=\{z=0\}$ by the above formula.

One can easily check that $\langle\iota_{p,q}(x,y,0),\iota_{p,q}(x,y,0)\rangle=0$, \emph{i.e.} $\iota_{p,q}(\pd\mathcal{H}^{p,q})$ is contained in $\pd_\infty\mathbb{H}^{p,q}$. In particular $\iota_{p,q}(\mathcal{H}^{p,q})\cap\iota_{p,q}(\pd\mathcal{H}^{p,q})=\emptyset$. To show that 
$\iota_{p,q}$ is injective, it therefore suffices to show that it is injective when restricted to $\pd\mathcal{H}^{p,q}$, since we already showed in Proposition \ref{emb} the injectivity of $\iota_{p,q}$ on $\HH^{p,q}$.

For this purpose, suppose $(x,y,0),(t,w,0)\in\pd\mathcal{H}^{p,q}$ are such that $\iota_{p,q}(x,y,0)=\iota_{p,q}(t,w,0)$. It follows from the expression above that $(t,w)=\lambda (x,y)$ for some $\lambda\ne0$ and
\begin{align*}
	\begin{cases}
		\lambda\left(1-h(x,y)\right)=1-h(t,w)=1-\lambda^{2}h(x,y)\\
		\lambda\left(1+h(x,y)\right)=1+h(t,w)=1+\lambda^{2}h(x,y)
	\end{cases},
\end{align*}	which can be rewritten equivalently as
\begin{align*}
	\begin{cases}
		h(x,y)\lambda(1-\lambda)=\lambda-1\\
		h(x,y)\lambda(1-\lambda)=1-\lambda
	\end{cases},
\end{align*}
whose only solution is $\lambda=1$. This concludes that $(t,w,0)=(x,y,0)$.

Moreover, using the same notation as in Proposition \ref{emb}, $X_{p}+X_{p+q+1}=1$, one immediately checks that $\iota_{p,q}(x,y,0)\in\mathrm P\{X_{p}+X_{p+q+1}\ne0\}$. In fact
$$\iota_{p,q}(\pd\HH^{p,q})=\partial_\infty\Hyp^{p,q}\cap\mathrm P\{X_{p}+X_{p+q+1}\ne0\}\subset\R\mathrm P^{p+q}~,$$
because, given a null vector $(X_1,\ldots,X_{p+q+1})$, up to rescaling we can assume $X_{p}+X_{p+q+1}=1$, and a simple algebraic manipulation shows that, for $x_i=X_i$ and $y_j=X_{j+p}$, ${\iota}_{p,q}(x,y,0)=[X_1,\ldots,X_{p+q+1}]$.
\end{proof}

\subsection{The full boundary in the half-space model}\label{sec:boundary hyperplanes}

Our next goal is to describe the entire boundary $\partial_\infty\Hyp^{p,q}$, seen in the half-space model. The starting observation is that $\partial_\infty\Hyp^{p,q}$ is in bijection with the space of degenerate totally geodesic hyperplanes in $\Hyp^{p,q}$. Indeed, to any $X\in \R^{p,q+1}$ such that $\langle X,X\rangle =0$, one associates the intersection of the orthogonal subspace of $X$ with $\Hyp^{p,q}$, more precisely $(X^\perp\cap\widetilde\Hyp^{p,q})/\{\pm\mathrm{Id}\}$, which is a totally geodesic hyperplane in $\Hyp^{p,q}$ of degenerate type. We will simply denote it with $X^\perp$, by a small abuse of notation. 

\begin{example}
The hyperplane ``at infinity'' which is the complement of the embedding $\iota_{p,q}$ in Proposition \ref{emb} is defined by the equation $X_p+X_{p+q+1}=0$, hence it is the orthogonal of any nonzero vector proportional to $\pd/\pd X_p-\pd/\pd X_{p+q+1}$ in $\R^{p,q+1}$.
\end{example}

Clearly two hyperplanes $X^\perp$ and $Y^\perp$ coincide if and only if $X$ and $Y$ are proportional, and every degenerate totally geodesic hyperplane is obtained in this way. It is also clear that the topology of $\pd_\infty\Hyp^{p,q}$ is homeomorphic, under this correspondence, to the Hausdorff topology on closed subsets of $\partial_\infty\Hyp^{p,q}$. (Indeed, a sequence of vectors $X_n$ converges projectively to $X$ if and only if the orthogonal subspace of $X_n$ converges to the orthogonal subspace of $X$.) Motivated by this observation, we give the following definition.

\begin{definition}\label{defi:topology bdy}
We define
$$\pd_\infty\HH^{p,q}=\{\text{degenerate totally geodesic hypersurfaces in }\HH^{p,q}\}\cup\{\infty\}~,$$
where we endow the space of  degenerate totally geodesic hyperplanes with the topology induced by the Hausdorff pseudometric on the closed half-space, and  $\pd_\infty\HH^{p,q}$ with its one-point compactification.
\end{definition}

Recall from Proposition \ref{prop:tot_geo_hyper} that degenerate totally geodesic hypersurfaces in $\HH^{p,q}$ are of the following two types:
\begin{itemize}
\item Vertical hyperplanes $V_{\mathcal{L}}$, where $\mathcal{L}$ is a degenerate affine hyperplane in $\R^{p-1,q}$;
\item Quadric hypersurfaces $Q_{(x_0,y_0)}$ of equation 
	\begin{equation}\label{eq:conex0y0}
			\|x-x_0\|^{2}-\|y-y_0\|^{2}+z^{2}=0
			\end{equation}
\end{itemize}
See also Remark \ref{rem:lightlike by embedding}. The discussion above contains all the elements to prove the following statement, for which we leave the details to the reader. 

\begin{lemma}
The embedding $\iota_{p,q}$ induces a homeomorphism between $\partial_\infty\HH^{p,q}$ and $\partial_\infty\Hyp^{p,q}$, which maps $Q_{(x_0,y_0)}$ to $\iota_{p,q}(x_0,y_0,0)$ and $\infty$ to the projective class of $\pd/\pd X_p-\pd/\pd X_{p+q+1}$ in $\R\mathrm{P}^{p+q}$.
\end{lemma}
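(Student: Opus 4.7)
The plan is to exploit the one-to-one correspondence $[X]\leftrightarrow X^\perp\cap\Hyp^{p,q}$ between $\pd_\infty\Hyp^{p,q}$ and the space of degenerate totally geodesic hyperplanes of $\Hyp^{p,q}$, recalled just before Definition \ref{defi:topology bdy}. Since $\iota_{p,q}(\HH^{p,q})$ is the complement of the single hyperplane $P\cap\Hyp^{p,q}$ associated with the distinguished null direction $[\pd/\pd X_p-\pd/\pd X_{p+q+1}]$, every other $X^\perp\cap\Hyp^{p,q}$ meets $\iota_{p,q}(\HH^{p,q})$ in a degenerate totally geodesic hypersurface of $\HH^{p,q}$, which by Remark \ref{rem:lightlike by embedding} and Proposition \ref{prop:tot_geo_hyper} is either a vertical $V_\mathcal{L}$ or a quadric $Q_{(x_0,y_0)}$. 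This produces the natural candidate bijection $\Phi\colon\pd_\infty\HH^{p,q}\to\pd_\infty\Hyp^{p,q}$ sending $\infty$ to $[\pd/\pd X_p-\pd/\pd X_{p+q+1}]$.

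Next I would pin down the explicit formula for $Q_{(x_0,y_0)}$ by a direct computation. Setting $X=\tilde\iota_{p,q}(x_0,y_0,0)=(x_0,(1-h(x_0,y_0))/2,y_0,(1+h(x_0,y_0))/2)$, a short algebraic manipulation yields
\begin{equation*}
\langle X,\tilde\iota_{p,q}(x,y,z)\rangle=-\frac{\|x-x_0\|^{2}-\|y-y_0\|^{2}+z^{2}}{2z}\,,
\end{equation*}
which vanishes precisely on $Q_{(x_0,y_0)}$ and thereby shows $\Phi(Q_{(x_0,y_0)})=\iota_{p,q}(x_0,y_0,0)$. A parallel computation with $X=(u,c,v,-c)\in P$ (where $\|u\|^{2}=\|v\|^{2}$ and $(u,v)\neq 0$) gives $\langle X,\tilde\iota_{p,q}(x,y,z)\rangle=(u\cdot x-v\cdot y+c)/z$, so such an $X$ corresponds to the vertical degenerate hyperplane $V_\mathcal{L}$ cut out by $u\cdot x-v\cdot y=-c$. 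Every null direction in $P$ different from $[\pd/\pd X_p-\pd/\pd X_{p+q+1}]$ has this form, while every null direction not in $P$ lies in $\iota_{p,q}(\pd\HH^{p,q})$ by the previous proposition, so $\Phi$ is bijective.

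For continuity, both $\pd_\infty\HH^{p,q}$ and $\pd_\infty\Hyp^{p,q}$ are compact Hausdorff, so it suffices to check that $\Phi^{-1}$ is continuous. Off the point $\infty$, this is immediate: the map $[X]\mapsto X^\perp$ is a homeomorphism onto its image in the Hausdorff topology on closed subsets of any compactification of $\Hyp^{p,q}$, and $\iota_{p,q}$ is a topological embedding intertwining this topology with the Hausdorff topology on closed subsets of $\overline{\HH^{p,q}}=\HH^{p,q}\cup\pd\HH^{p,q}$. At $\infty$, continuity reduces to the following statement: if $[X_n]\to[\pd/\pd X_p-\pd/\pd X_{p+q+1}]$ projectively, then $\Phi^{-1}([X_n])$ eventually leaves every compact subset of $\overline{\HH^{p,q}}$. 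This follows from the fact that $X_n^\perp\cap\Hyp^{p,q}$ converges to $P\cap\Hyp^{p,q}$ in the Hausdorff topology, together with the observation that for any compact $K\subset\overline{\HH^{p,q}}$ the set $\iota_{p,q}(K)$ is a fixed compact subset of $\Hyp^{p,q}$ disjoint from $P\cap\Hyp^{p,q}$, whence $\iota_{p,q}(K)\cap X_n^\perp=\emptyset$ for $n$ large.

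The main obstacle I expect is making rigorous the matching of the two topologies at the point $\infty$: one must show that a basis of neighborhoods of $\infty$ in the one-point compactification of the space of degenerate hyperplanes of $\HH^{p,q}$ is given precisely by families of hyperplanes whose associated null directions lie in a projective neighborhood of $[\pd/\pd X_p-\pd/\pd X_{p+q+1}]$. The reverse implication, namely that if $\Phi^{-1}([X_n])$ leaves every compact set of $\overline{\HH^{p,q}}$ then $[X_n]\to[\pd/\pd X_p-\pd/\pd X_{p+q+1}]$, is best proved by contradiction: a subsequential projective limit $[X]\neq[\pd/\pd X_p-\pd/\pd X_{p+q+1}]$ produces, via the continuity of $[X]\mapsto X^\perp$ and of $\iota_{p,q}^{-1}$ on its image, a nonempty limiting degenerate hypersurface of $\HH^{p,q}$, contradicting the escape to infinity.
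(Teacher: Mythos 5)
Your proposal is correct and follows essentially the same route the paper intends: the paper leaves the details to the reader, pointing to the correspondence $[X]\leftrightarrow X^\perp$, to Remark \ref{rem:lightlike by embedding} for identifying the preimages of degenerate hyperplanes (your explicit computations of $\langle X,\tilde\iota_{p,q}(x,y,z)\rangle$ are exactly the ones sketched there), and to the Hausdorff-topology discussion preceding Definition \ref{defi:topology bdy} for the matching of topologies. Your treatment of continuity at $\infty$ via escape from compact sets is the same mechanism as Remark \ref{rmk:convergence}, so there is nothing substantively different to compare.
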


\begin{remark}\label{rmk:convergence}
Let us describe more concretely how a sequence of elements in $\pd\HH^{p,q}$, seen as a subset of $\pd_\infty\HH^{p,q}$, can converge to a degenerate hyperplane of the form $V_{\mathcal{L}}$. Consider a sequence $Q_{(x_n,y_n)}$, for $(x_n,y_n)$  in $\R^{p-1,q}$. If $(x_n,y_n)$ converges to $(x_\infty,y_\infty)$, then clearly $Q_{(x_n,y_n)}$ converges to $Q_{(x_\infty,y_\infty)}$, which still gives an element of $\pd\HH^{p,q}$. So let us assume that $(x_n,y_n)$ diverges in $\R^{p-1,q}$.

Up to extracting a subsequence, we can assume that $x_n/\|x_n\|\to u$ and $y_n/\|y_n\|\to v$, for $(u,v)\in S^{p-2}\times S^{q-1}$.  Assume $(u,v)$ is a lightlike vector in $\R^{p-1,q}$ and moreover that $\|x_n\|-\|y_n\|$ has a finite limit $a\in\R$. Then dividing by $\|y_n\|$ the defining equation for $Q_{(x_n,y_n)}$, namely
$\|x-x_n\|^2-\|y-y_n\|^2+z^2=0$, and observing that  by our assumption $\|x_n\|/\|y_n\|\to 1$, we obtain that  $Q_{(x_n,y_n)}$ converges to the vertical hyperplane of equation
$x\cdot u-y\cdot v=a~,$
where $\cdot$ is the Euclidean inner product. Namely, the limit is a vertical hyperplane $V_{\mathcal{L}}$ whose underlying vector space is orthogonal to $(u,v)$. 

One can then show that otherwise $Q_{(x_n,y_n)}$ escapes from all compact sets of the half-space: in this situation the sequence $(x_n,y_n)$ in $\pd\HH^{p,q}$ converges to the point $\infty\in\pd_\infty\HH^{p,q}$.
\end{remark}

\subsection{Examples}

Let us now describe the topology of $\partial_\infty\HH^{p,q}$ in two definite examples.

\begin{example}
Let us first consider $\HH^{1,n}$, namely the half-space model of \emph{minus} the de Sitter space. In this case $\partial\HH^{1,n}$ is conformal to $\R^{0,n}$, hence is negative definite, therefore there are no degenerate affine hyperplanes in $\partial\HH^{1,n}$. In other words, $\partial_\infty\HH^{1,n}$ is the one-point compactification of $\partial\HH^{1,n}\cong\R^n$, and therefore is homeomorphic to the sphere $S^n$. This is not surprising indeed, as the $(n+1)$-dimensional de Sitter space shares the same boundary at infinity as the hyperbolic space $\Hyp^{n+1}$ of the same dimension. 

From the point of view of $\HH^{1,n}$, this corresponds to the fact that a sequence of degenerate totally geodesic hypersurfaces $Q_{y_n}$ of equation $\|y-y_n\|^2-z^2=0$, which is a cone over $(y_n,0)$ (see Figure \ref{fig:quadrics} on the right), escapes from compact sets in the half-space if the sequence $y_n$ is diverging in $\R^n$.
\end{example}

\begin{example}
Let us now consider a more interesting situation, namely the Anti-de Sitter half-space $\HH^{n,1}$. In this case $\pd_\infty\HH^{n,1}$ decomposes as the disjoint union of $\pd\HH^{n,1}$, which is a copy of the $n$-dimensional Minkowski space $\R^{n-1,1}$, the singleton $\{\infty\}$, and the space of vertical hyperplanes $V_{\mathcal{L}}$. The latter is in bijection with the space of degenerate hyperplanes in $\R^{n-1,1}$, which is a trivial bundle $S^{n-2}\times\R$, where the $S^{n-2}$ factor determines the orthogonal direction (\emph{i.e.} the projectivization of the cone $\{\|x\|^2-y^2=0\}$ of null directions), and the $\R$ factor the intercept on the $y$ axis. The complement $\pd_\infty\HH^{n,1}\setminus \pd\HH^{n,1}$ is therefore the one-point compactification of $S^{n-2}\times\R$.

When $n=2$, $S^0\times \R$ is the disjoint union of two lines, and its one-point compactification is homeomorphic to a wedge sum of two circles. Hence we directly recover the fact that $\partial_\infty\Hyp^{2,1}$ is homeomorphic to a torus $S^1\times S^1$. Indeed, from Remark \ref{rmk:convergence} we see that $\R^{1,1}$ is compactified by adding a point to compactify every line of the form $y=x+a$ (this is the first copy of $\R$ in $S^0\times\R$), and a point for every line of the form $y=-x+a$ (the second copy of $\R$). By adding the point $\infty$, we then see that the obtained topology is that of a torus, see Figure \ref{fig:torus}. Compare also (\cite[Appendix]{dan:phd}) for a more algebraic approach to this compactification.
\end{example}

\begin{figure}
\begin{center}
 \includegraphics[height=5cm]{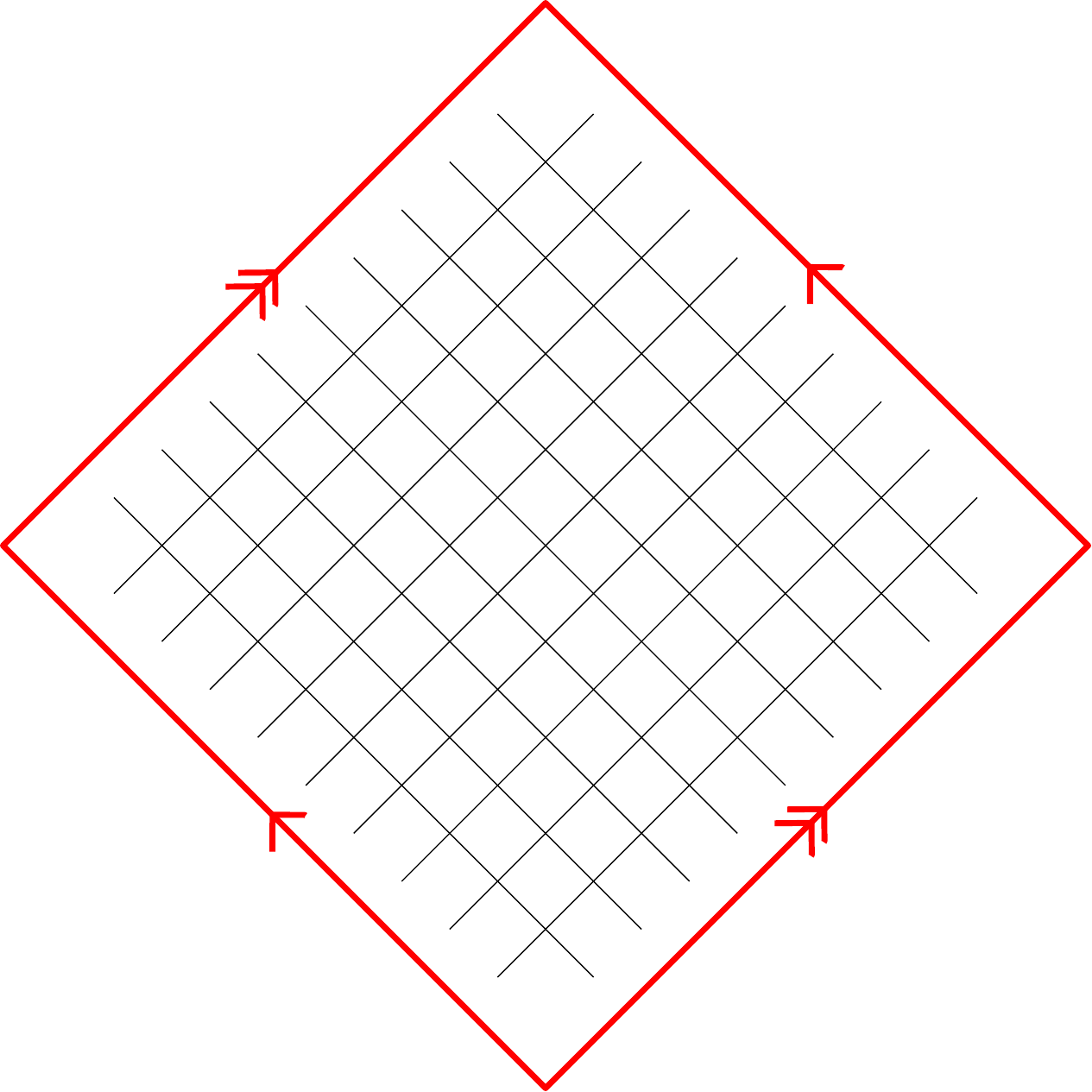}
\end{center}
\caption{The compactification of $\pd\HH^{2,1}$, which is a copy of $\R^{1,1}$ represented by the interior of the diamond, inside $\pd_\infty\HH^{2,1}$. The lines at $\pm 45^\circ$ represent the degenerate affine subspaces in $\R^{1,1}$, and each of them is compactified to a different point. The point $\infty$ then corresponds to the vertices of the diamond. The identifications of the sides clearly give the topology of a torus on $\pd_\infty\HH^{2,1}$.}\label{fig:torus}
\end{figure}

\subsection{Geodesics revisited}

To conclude this section, we discuss again the geodesics in $\HH^{p,q}$, now in terms of their endpoints in $\partial_\infty\HH^{p,q}$. Indeed, in $\Hyp^{p,q}$ the geodesics have the following topological behaviour:
\begin{itemize}
\item Spacelike geodesics converge to two different points in $\partial_\infty\HH^{p,q}$ at the two ends.
\item Lightlike geodesics converge to the same point in $\partial_\infty\HH^{p,q}$ at the two ends.
\item Timelike geodesics are closed, hence do not intersect $\partial_\infty\HH^{p,q}$.
\end{itemize}
We will classify geodesics, distinguishing their type as usual, in relation with their endpoints. 

\begin{remark}\label{rmk:convergence2}
Before stating the results, we give a preliminary observation that will be used repeatedly. It will be important to understand when a sequence of points $(x_n,y_n,z_n)\in \HH^{p,q}$ converges to a point of $\partial_\infty\HH^{p,q}$. In Section \ref{sec:boundary hyperplanes} we explained this for a sequence in $\partial\HH^{p,q}$, \emph{i.e.} for $z_n\equiv 0$ (see the discussion preceding Definition \ref{defi:topology bdy}).

When the points are in the interior, we can apply a similar consideration, namely the fact that a sequence of points $X_n\in\Hyp^{p,q}$ converges (projectively) to $X\in \pd_\infty\Hyp^{p,q}$ if and only if the lightcone emanating from $X_n$ converges to the totally geodesic degenerate hypersurface which corresponds to the orthogonal complement of $X$. Hence to check that a sequence $(x_n,y_n,z_n)\in \HH^{p,q}$ converges to a point of $\pd_\infty\HH^{p,q}$, which we recall is identified to the space of totally geodesic degenerate hypersurfaces, it suffices to check the convergence of the lightcones from $(x_n,y_n,z_n)\in \HH^{p,q}$ (as in Figure \ref{fig:lightcone}).

In particular, it is clear that the topology on $\HH^{p,q}\cup\partial\HH^{p,q}$ coincides with that of the closed half-space, because if $(x_n,y_n,z_n)\to (x_0,y_0,0)$, then the lightcones $\|x-x_n\|^2-\|y-y_n\|^2+|z-z_n|^2=0$ converge to the totally geodesic degenerate hypersurface $\|x-x_0\|^2-\|y-y_0\|^2+|z|^2=0$.
\end{remark}

Let us first consider spacelike geodesics, beginning with the case where the two endpoints are both in $\partial\HH^{p,q}\subset \partial_\infty\HH^{p,q}$.

\begin{proposition}\label{prop:spacegeo2}
Let $(x_0,y_0),(x_0',y_0')\in \partial\HH^{p,q}$ and let $(u,v)=(x_0'-x_0,y_0'-y_0)$ and $(x_m,y_m)=((x_0+x_0')/2,(y_0+y_0')/2)$. Then:
\begin{itemize}
\item If $\|u\|>\|v\|$, then the unique  geodesic of $\Hyp^{p,q}$ with endpoints $(x_0,y_0)$ and $(x_0',y_0')$ is contained in $\HH^{p,q}$, and is the ellipse of eccentricity $e_S(u,v)$ with center $(x_m,y_m)$.
\item If $\|u\|<\|v\|$, then the unique  geodesic of $\Hyp^{p,q}$ with endpoints $(x_0,y_0)$ and $(x_0',y_0')$ is contained in $\HH^{p,q}$ except for one point, and its intersection with $\HH^{p,q}$ consists of the two upper half-branches of the hyperbola of eccentricity $e_S(u,v)$ with center $(x_m,y_m)$.
\item If $\|u\|=\|v\|$, there is no geodesic with endpoints $(x_0,y_0)$ and $(x_0',y_0')$.
\end{itemize}
\end{proposition}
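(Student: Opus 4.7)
The strategy is to lift the problem to $\Hyp^{p,q}$ via the embedding $\iota_{p,q}$, where the existence and type of a geodesic between two points at infinity becomes a linear-algebra question in $\R^{p,q+1}$, and then transfer the information back to the half-space using Theorem \ref{thm:tot_geo_submanifolds} and Lemma \ref{lemma:prel comp}.

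Let $X, Y \in \R^{p,q+1}$ be the null representatives of $\iota_{p,q}(x_0, y_0, 0)$ and $\iota_{p,q}(x_0', y_0', 0)$ given by the extension of the formula in Proposition \ref{emb}, normalized so that $X_p + X_{p+q+1} = Y_p + Y_{p+q+1} = 1$. A direct expansion (collecting the cross terms $x_0\cdot x_0' - y_0\cdot y_0'$ against $h(x_0,y_0)+h(x_0',y_0')$) yields the key identity
$$\langle X, Y\rangle \;=\; -\tfrac12\bigl(\|u\|^2 - \|v\|^2\bigr)\,.$$
Any geodesic of $\Hyp^{p,q}$ joining $[X]$ and $[Y]$ is contained in the $2$-plane $\Pi = \mathrm{span}(X, Y)$. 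Since $X, Y$ are null, the Gram matrix of $\Pi$ in the basis $(X,Y)$ has determinant $-\langle X,Y\rangle^2$, so $\Pi$ is non-degenerate of signature $(1,1)$ (producing a unique spacelike geodesic $\Pi\cap\Hyp^{p,q}$) iff $\|u\|\neq\|v\|$, and totally null otherwise. Lightlike geodesics are excluded: using Proposition \ref{lightlike}, a short limit computation shows that their endpoints at infinity always land on the hyperplane $\{X_p + X_{p+q+1} = 0\}$, hence off $\iota_{p,q}(\pd\HH^{p,q})$; and timelike geodesics (Proposition \ref{timelike}) do not meet $\pd_\infty\Hyp^{p,q}$ at all. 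This settles case (3).

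Assume now $\|u\|\neq\|v\|$. By Theorem \ref{thm:tot_geo_submanifolds} the trace of the geodesic in $\HH^{p,q}$ lies in the vertical $2$-plane $V_\ell$, where $\ell$ is the line through $(x_0,y_0)$ and $(x_0',y_0')$. In the $(s,z)$-coordinates of Lemma \ref{lemma:prel comp} (with $s=0$ at $(x_0,y_0)$), the conic equation $\alpha s^2 + z^2 + 2As = C$ with $\alpha = (\|u\|^2-\|v\|^2)/(\|u\|^2+\|v\|^2)$ and the two boundary conditions force $C = 0$ and $A = -\alpha\sqrt{\|u\|^2+\|v\|^2}/2$; completing the square gives
$$\alpha\bigl(s - \tfrac12\sqrt{\|u\|^2+\|v\|^2}\bigr)^2 + z^2 \;=\; \tfrac{\alpha}{4}(\|u\|^2+\|v\|^2)\,.$$
This is an ellipse (if $\alpha>0$) or a hyperbola (if $\alpha<0$) centered at the midpoint $(x_m,y_m)$, with vertices $(x_0,y_0)$ and $(x_0',y_0')$ on $\{z=0\}$. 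A one-line read-off of the ratio of semi-axes gives eccentricity $\sqrt{1-\alpha} = e_S(u,v)$ in both cases; in case 2 the intersection with $\{z>0\}$ is precisely two upper half-branches emanating from the two vertices.

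It remains to match this shape with the full $\Hyp^{p,q}$-geodesic by counting missing points. With our normalization, the complement of $\iota_{p,q}(\HH^{p,q})$ is cut out by $X_p + X_{p+q+1} = 0$, which for a geodesic point $[aX+bY]$ (with $2ab\langle X,Y\rangle = -1$) becomes $a+b = 0$. In case 1, $\langle X,Y\rangle < 0$ forces $ab > 0$, hence $a+b\neq 0$ throughout, and the ellipse is the entire geodesic. In case 2, $\langle X,Y\rangle > 0$ gives $ab < 0$, and the system $a = -b$, $ab = -1/(2\langle X,Y\rangle)$ has a unique projective solution, giving exactly one missing point where the two half-branches ``meet at infinity'' in the half-space model. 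The crux of the argument is the identity $\langle X, Y\rangle = -\tfrac12(\|u\|^2-\|v\|^2)$, which simultaneously governs existence, the type of the conic, and the sign of $ab$ controlling the missing-point count; every other step reduces to one-variable conic bookkeeping.
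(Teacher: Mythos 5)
Your proof is correct, but it takes a genuinely different route from the paper's. The paper treats this proposition as essentially a corollary of Proposition \ref{prop:spacelike geo1}: existence, uniqueness and the conic shapes are already established there, and the only new content is that the two upper half-branches in the case $\|u\|<\|v\|$ belong to a \emph{single} geodesic of $\Hyp^{p,q}$ separated by one point, which the paper verifies by pushing the explicit parameterization $y_1=\pm\cosh t$, $z=\sinh t$ through $\iota_{p,q}$ and observing that both ends limit to the same point of $\Hyp^{p,q}$. You instead work entirely in the linear model: the identity $\langle X,Y\rangle=-\tfrac12(\|u\|^2-\|v\|^2)$ (which checks out) lets the Gram matrix of $\mathrm{span}(X,Y)$ decide existence and spacelikeness in one stroke, and the count of solutions of $a+b=0$ on $\{2ab\langle X,Y\rangle=-1\}$ replaces the paper's explicit limit computation. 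This is arguably cleaner and more unified --- one scalar invariant governs all three cases --- at the cost of redoing some conic bookkeeping that Proposition \ref{prop:spacelike geo1} already contains.

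Two small repairs are needed. First, your stated reason for excluding lightlike geodesics is wrong: the endpoints of non-horizontal lightlike lines lie in $\pd\HH^{p,q}$, i.e.\ in $\{X_p+X_{p+q+1}=1\}$ after your normalization, not in $\{X_p+X_{p+q+1}=0\}$. The correct (and simpler) exclusion is that a lightlike geodesic of $\Hyp^{p,q}$ meets $\pd_\infty\Hyp^{p,q}$ in a single point (the radical of its degenerate $2$-plane), so it can never join two \emph{distinct} boundary points; alternatively, your own span argument already forces any geodesic with endpoints $[X]\neq[Y]$ into $\mathrm{span}(X,Y)$, which is spacelike of signature $(1,1)$ whenever $\langle X,Y\rangle\neq 0$. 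Second, in case $\|u\|<\|v\|$ the trace of the $\Hyp^{p,q}$-geodesic in the half-space has two connected components, so you cannot quote Theorem \ref{thm:tot_geo_submanifolds} (which classifies connected maximal totally geodesic submanifolds) to conclude that both components satisfy one and the same equation \eqref{eq:conic0} with one and the same pair $(A,C)$; you need the extra observation that $\tilde\iota_{p,q}^{-1}(\Pi)$ is cut out by pulling back the linear equations defining $\Pi$, which yields a single quadric-in-a-vertical-plane containing both half-branches. With that in place, imposing the two boundary conditions does pin down $A$ and $C$ as you claim, and the rest of the argument goes through.
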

Recall that the value $e_S(u,v)$ of the eccentricity appears in Proposition \ref{prop:spacelike geo1}.
\begin{proof}
There is not much left to prove here. The first point follows from Proposition \ref{prop:spacelike geo1}. For the third point, it is known that if two points in $\partial_\infty\Hyp^{p,q}$ are connected by a lightlike segment in the boundary, then they are not connected by a spacelike geodesic; however the non-existence also follows from Proposition \ref{prop:spacelike geo1}. For the second point, using again Proposition \ref{prop:spacelike geo1}, the only thing left to prove is that the two half-branches of the same hyperbola are parts of the same spacelike geodesic in $\Hyp^{p,q}$, and are separated by a single point. (We have showed that these branches are incomplete on the upper end, so they certainly converge to the interior of $\Hyp^{p,q}$, since $\Hyp^{p,q}$ is geodesically complete.)

To prove this statement, we can apply the isometry group of $\HH^{p,q}$ and reduce to the curve parameterized by $y_1(t)=\pm\cosh(t)$, $z(t)=\sinh(t)$, and all the other coordinates identically zero (exactly as we did in the proof of Proposition \ref{prop:spacelike geo1}). A direct computation shows that $\iota_{p,q}$ maps this curve to a curve in $\Hyp^{p,q}$ whose only nonzero coordinates are $X_p=1/\sinh(t)$ and $X_{p+1}=\pm\cosh(t)/\sinh(t)$. Clearly these points all lie on the same geodesic, because they are contained in a unique 2-plane in $\R^{p,q+1}$, and the limit as $t\to +\infty$ is the same point in $\Hyp^{p,q}$ (\emph{i.e.} after taking the quotient by $\{\pm\mathrm{Id}\}$) regardless of the sign $\pm$ in front of $y_1(t)$. This concludes the proof.
\end{proof}

\begin{remark}\label{rmk:timegeo2}
A very similar computation shows that timelike geodesics of $\HH^{p,q}$ are mapped to the complement of a point on a (closed) timelike geodesic of $\Hyp^{p,q}$. Indeed, up to isometry, we can reduce to the branch of hyperbola given by   $	y_{1}(t)=\sinh(t)$, $z(t)=\cosh(t)$ and all the other coordinates identically zero. One can then show that the limit of the image under the embedding $\iota_{p,q}$  is the same point in $\Hyp^{p,q}$ as $t\to \pm\infty$.
\end{remark}

The case where one point is on $\partial\HH^{p,q}$ and the other is $\infty$ is very easy to deal with.

\begin{proposition}
Let $(x_0,y_0)\in \partial\HH^{p,q}$. The unique geodesic with endpoints $(x_0,y_0)$ and $\infty$ is the vertical line over $(x_0,y_0)$.
\end{proposition}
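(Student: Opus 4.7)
The plan is to verify that the vertical line over $(x_0,y_0)$ has the claimed endpoints in $\pd_\infty\HH^{p,q}$, and then invoke a uniqueness principle in $\Hyp^{p,q}$ via the embedding $\iota_{p,q}$. First I would recall from Proposition \ref{prop:spacelike geo1} that the vertical line over $(x_0,y_0)$, parameterized (by arc length) as $\gamma(t)=(x_0,y_0,e^t)$, is a complete spacelike geodesic of $\HH^{p,q}$. It therefore extends to a complete geodesic of $\Hyp^{p,q}$ via $\iota_{p,q}$, and the task reduces to identifying its two endpoints in $\pd_\infty\HH^{p,q}$.

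The first endpoint, as $t\to-\infty$, is immediate: $(x_0,y_0,e^t)$ converges to $(x_0,y_0,0)\in\pd\HH^{p,q}$ in the usual topology of the closed half-space, and by Remark \ref{rmk:convergence2} this is the same as the convergence in $\pd_\infty\HH^{p,q}$. For the other endpoint I would show that as $t\to+\infty$ the trajectory converges to the added point $\infty$. Two equivalent ways to do this: (a) apply Remark \ref{rmk:convergence2}, observing that the lightcone from $(x_0,y_0,e^t)$ has equation $\|x-x_0\|^2-\|y-y_0\|^2+(z-e^t)^2=0$, which escapes every compact subset of $\HH^{p,q}\cup\pd\HH^{p,q}$ as $e^t\to+\infty$, so the limit in $\pd_\infty\HH^{p,q}$ cannot be of the form $Q_{(x_\infty,y_\infty)}$ or $V_{\mathcal L}$, and must therefore be $\infty$; or (b) use the explicit formula of Proposition \ref{emb} to compute that $e^{-t}\,\tilde\iota_{p,q}(x_0,y_0,e^t)$ tends to a vector proportional to $\pd/\pd X_p-\pd/\pd X_{p+q+1}$, which is exactly the projective class corresponding to $\infty$.

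It remains to prove uniqueness. Through the embedding $\iota_{p,q}$, the two endpoints lift to the null vectors $X_0=(x_0,(1-h(x_0,y_0))/2,\,y_0,(1+h(x_0,y_0))/2)$ and $Y=\pd/\pd X_p-\pd/\pd X_{p+q+1}$ in $\R^{p,q+1}$. A direct computation gives $\langle X_0,Y\rangle=1\neq 0$, so the $2$-plane $\spa(X_0,Y)$ has signature $(1,1)$ and meets $\widetilde\Hyp^{p,q}$ in a unique complete geodesic (up to reparameterization and sign), which descends to the unique geodesic of $\Hyp^{p,q}$ with those endpoints at infinity. Since we have already exhibited the vertical line as one such geodesic, it must be the only one.

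The only mildly delicate step is the convergence to $\infty$ at the upper end, and even that is a short computation once Remark \ref{rmk:convergence2} (or the formula for $\iota_{p,q}$) is invoked; the rest is bookkeeping.
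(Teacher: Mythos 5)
Your proof is correct and, for the main verification, follows the same route as the paper: both identify the two endpoints of the vertical line by tracking the lightcones from $(x_0,y_0,z)$ as in Remark \ref{rmk:convergence2} (converging to $Q_{(x_0,y_0)}$ as $z\to 0$ and escaping compact sets as $z\to+\infty$). The only difference is that you also spell out the uniqueness via the computation $\langle X_0,Y\rangle=1\neq 0$ and the signature-$(1,1)$ plane $\spa(X_0,Y)$, a standard fact that the paper leaves implicit; this is a correct and harmless addition.
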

\begin{proof}
Applying Remark \ref{rmk:convergence2}, the endpoints of the vertical line over $(x_0,y_0)$ are clearly $(x_0,y_0)$ and $\infty$, for the lightcone over $(x_0,y_0,z)$ converges to the totally geodesic hypersurface $\|x-x_0\|^2-\|y-y_0\|^2+|z|^2=0$  as $z\to 0$, and escapes from compact sets as $z\to+\infty$.
\end{proof}

We are only left with the case where one point is on $\partial\HH^{p,q}$, and the other is represented by a totally geodesic hypersurface $V_{\mathcal L}$. Indeed, after proving the next proposition, and comparing with Proposition \ref{prop:spacelike geo1}, we see that \emph{a posteriori} there are no geodesics in $\HH^{p,q}$ connecting two points of the form $V_{\mathcal L}$ or $\infty$.

\begin{proposition}\label{prop:endpoint parabola}
Let $(x_0,y_0)\in \partial\HH^{p,q}$, $(u,v)\in\R^{p-1,q}$ such that $\|u\|=\|v\|$ and $\|u\|^2+\|v\|^2=1$, $a\in\R$ and $\mathcal L_{(u,v)}^a$ the degenerate affine hyperplane in $\partial\HH^{p,q}$ of equation
$$(x-x_0)\cdot u-(y-y_0)\cdot v=a~.$$ 
Then the unique geodesic with endpoints $(x_0,y_0)$ and $V_{\mathcal L_{(u,v)}^a}$ is the parabola
\begin{equation}\label{eq:parabola parameterized}
x(t)=x_0+\frac{t^2}{2a}u,\qquad y(t)=y_0+\frac{t^2}{2a}v, \qquad z(t)=t~.
\end{equation}
\end{proposition}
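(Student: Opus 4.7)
The plan is to verify that the parameterized parabola in \eqref{eq:parabola parameterized} is a spacelike geodesic of $\HH^{p,q}$, identify its two endpoints in $\partial_\infty\HH^{p,q}$ as claimed, and then appeal to the classification of geodesics to conclude uniqueness.

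For the first point, the image of $\gamma$ lies in the vertical $2$-plane $V_\ell$ through $(x_0,y_0)$ with direction $(u,v)$. Up to the translation by $(x_0,y_0)$, the normalization $\|u\|^2+\|v\|^2=1$ allows us to parameterize $V_\ell$ as in Lemma \ref{lemma:prel comp} by $(s,z)$ with $(x,y)=(x_0,y_0)+s(u,v)$; our curve then reads $s(t)=t^2/(2a)$ and $z(t)=t$, so $z^2=2as$. Since $\|u\|=\|v\|$, this coincides with equation \eqref{eq:conic0} (with the coefficient $(\|u\|^2-\|v\|^2)/(\|u\|^2+\|v\|^2)$ vanishing and $A=-a$, $C=0$), and by Proposition \ref{prop:spacelike geo1}(3) the image is a spacelike parabolic geodesic with vertex at $(x_0,y_0,0)$.

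To identify the endpoints, we apply Remark \ref{rmk:convergence2} and study the limit of the lightcone hypersurface at $\gamma(t)$, given by $\|x-x(t)\|^2-\|y-y(t)\|^2+(z-z(t))^2=0$. As $t\to 0^+$ we obtain immediately that $\gamma(t)\to(x_0,y_0,0)\in\partial\HH^{p,q}$. For $t\to+\infty$, expanding the lightcone equation and using $\|u\|^2=\|v\|^2$ to cancel the $t^4$ terms yields
\[\|x-x_0\|^2-\|y-y_0\|^2+z^2+t^2-2zt-\frac{t^2}{a}\bigl((x-x_0)\cdot u-(y-y_0)\cdot v\bigr)=0.\]
Dividing by $t^2$ and letting $t\to+\infty$ gives the limiting equation $(x-x_0)\cdot u-(y-y_0)\cdot v=a$, which is precisely the defining equation of $\mathcal L^a_{(u,v)}$. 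Hence $\gamma(t)\to V_{\mathcal L^a_{(u,v)}}$ as $t\to+\infty$.

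For uniqueness, we combine the already established classification results. Lightlike geodesics have coincident endpoints (Proposition \ref{lightlike}), timelike geodesics never reach $\partial_\infty\HH^{p,q}$ (Proposition \ref{timelike} and Remark \ref{rmk:timegeo2}), and among spacelike geodesics vertical lines have $\infty$ as one endpoint while ellipses and half-branches of hyperbolas have both endpoints in $\partial\HH^{p,q}$ (Proposition \ref{prop:spacegeo2}). Only the parabolas of Proposition \ref{prop:spacelike geo1}(3) connect a point of $\partial\HH^{p,q}$ to some $V_{\mathcal L}$: the vertex determines $(x_0,y_0)$, and the lightlike normal together with the affine offset of $\mathcal L^a_{(u,v)}$ determine $(u,v,a)$ up to the symmetry $(u,v,a)\mapsto(-u,-v,-a)$, which yields the same parabola. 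The principal technical point I expect is the $t\to+\infty$ endpoint computation, where one must correctly track the cancellation of the $t^4$ contribution made possible by the lightlike hypothesis $\|u\|=\|v\|$; the rest of the argument reduces to inspection or invocation of the classification.
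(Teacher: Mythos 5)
Your proposal is correct and follows essentially the same route as the paper: the endpoint at infinity is identified by expanding the lightcone equation at $\gamma(t)$, using $\|u\|=\|v\|$ to cancel the $t^4$ terms, dividing by $t^2$, and letting $t\to+\infty$ to recover the defining equation of $\mathcal L^a_{(u,v)}$, while the endpoint $(x_0,y_0)$ and the uniqueness follow by comparison with the classification of Proposition \ref{prop:spacelike geo1}. You are somewhat more explicit than the paper about verifying that the curve is a geodesic and about why no other geodesic can share these two endpoints, but the substance is the same.
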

\begin{proof}
Up to a horizontal translation, which does not affect the conclusion of the statement, we can assume $(x_0,y_0)=(0,0)$. Set $\alpha=1/4a$. The lightcones over $(x(t),y(t),z(t))$ satisfy the equation $$\|x-2\alpha t^2u\|^2-\|y-2\alpha t^2v\|^2+|z-t|^2=0~.$$ Dividing by $t^2$ and using $\|u\|^2=\|v\|^2$, we see that these lightcones converge to the vertical hyperplane of equation $x\cdot u-y\cdot v=1/4\alpha=a$. Clearly $(x(t),y(t),z(t))$ converges to $(x_0,y_0,0)\in\pd\HH^{p,q}$ as $t\to 0$. This concludes the proof.
\end{proof}

\begin{remark}\label{rmk:endpoint parabola}
One might wonder what is the geometric interpretation of the parameter $a$, which encodes the relation between the parabola and its endpoint at infinity, seen as a vertical hyperplane that does \emph{not} contain the parabola itself. Let us describe the geometric intuition behind this relation. Given a parabola as in Equation \eqref{eq:parabola parameterized}, contained in a degenerate 2-plane $V_{\ell}$, where $\ell$ is an affine line directed by  $(u,v)$, one can uniquely express this parabola as the intersection of $V_\ell$ and a totally geodesic degenerate hypersurface, which is a lightcone over a point $(\hat x,\hat y,0)$. The vertical hyperplane  to which the parabola is asymtoptic to is then the unique vertical hyperplane $V_{\mathcal L}$ such that the underlying vector space of $\mathcal L$ is the orthogonal of $(u,v)$ and contains $(\hat x,\hat y,0)$. 
\end{remark}

Let us now move on to lightlike geodesics. 

\begin{proposition}
Given $(x_0,y_0)\in \partial\HH^{p,q}$, the lightlike geodesics of $\Hyp^{p,q}$ with endpoint $(x_0,y_0)$ are contained in $\HH^{p,q}$ except for one point, and their intersection with $\HH^{p,q}$ consists of two straight half-lines contained in the same vertical 2-plane. 
\end{proposition}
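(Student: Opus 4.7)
By Proposition \ref{lightlike}, a lightlike geodesic in $\HH^{p,q}$ that accumulates to a point of $\pd\HH^{p,q}$ is necessarily of the form $\gamma_+(s)=(x_0,y_0,0)+s(\alpha,\beta,w)$ for $s>0$, with $(\alpha,\beta,w)$ lightlike and $w>0$, i.e.\ $\|\alpha\|^2-\|\beta\|^2+w^2=0$. This already produces one of the two half-lines, and any complete lightlike geodesic of $\Hyp^{p,q}$ with endpoint $(x_0,y_0)$ extends such a $\gamma_+$. To describe its continuation, the plan is to pass to the hyperboloid $\widetilde\Hyp^{p,q}$, where lightlike geodesics are straight lines of $\R^{p,q+1}$. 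Reparametrizing by $t=1/s$, a direct computation---in which the lightlike relation on $(\alpha,\beta,w)$ serves to cancel the $t^{-2}$ term in $h(x(t),y(t))+z(t)^2$---shows that $\widetilde\iota_{p,q}\circ\gamma_+(1/t)$ is the affine curve $t\mapsto A+tB$, where $A,B\in\R^{p,q+1}$ satisfy $\langle A,A\rangle=-1$, $\langle B,B\rangle=0$ and $\langle A,B\rangle=0$. One moreover checks $A_p+A_{p+q+1}=0$ and $B_p+B_{p+q+1}=1/w$. Hence $t\mapsto A+tB$, now defined for all $t\in\R$, parametrizes a complete lightlike geodesic of $\widetilde\Hyp^{p,q}$, whose projection to $\Hyp^{p,q}$ is the full geodesic sought.

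By Equation \eqref{eq:aussois}, $[A+tB]\in\iota_{p,q}(\HH^{p,q})$ if and only if $(A+tB)_p+(A+tB)_{p+q+1}=t/w$ is nonzero; hence the complete geodesic misses $\iota_{p,q}(\HH^{p,q})$ at the single point $[A]$, which lies on the degenerate hyperplane $P\cap\Hyp^{p,q}$ of Proposition \ref{emb}. For $t<0$ one has to replace $A+tB$ by its antipode (to fall back into $\{X_p+X_{p+q+1}>0\}$) and then apply the explicit inverse of $\widetilde\iota_{p,q}$ recalled in the proof of Proposition \ref{emb}. Setting $s=-1/t>0$, a short simplification produces the second half-line $\gamma_-(s)=(x_0,y_0,0)+s(-\alpha,-\beta,w)$, which is again lightlike since $\|-\alpha\|^2-\|-\beta\|^2+w^2=0$.

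Finally, both half-lines lie in the translate of $\spa\{(\alpha,\beta,0),(0,0,1)\}$ through $(x_0,y_0,0)$, because $(\pm\alpha,\pm\beta,w)=\pm(\alpha,\beta,0)+w(0,0,1)$; intersected with $\{z>0\}$, this is the vertical $2$-plane $V_\ell$ of Proposition \ref{vert}, for $\ell$ the affine line through $(x_0,y_0)$ directed by $(\alpha,\beta)$. The only conceptually non-routine step is the observation that the reparametrization $t=1/s$ linearizes $\widetilde\iota_{p,q}\circ\gamma_+$ to an affine curve of $\R^{p,q+1}$; once this is noticed, the remainder is bookkeeping with the formulas of Proposition \ref{emb}.
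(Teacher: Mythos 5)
Your proof is correct and follows essentially the same route as the paper: both push the two candidate half-lines through the embedding $\iota_{p,q}$ into the projective model and verify that they are the two halves of a single straight (lightlike) line there, separated by one point on the degenerate hyperplane at infinity. You simply carry out explicitly the computation that the paper delegates to analogy with Proposition \ref{prop:spacegeo2} and Remark \ref{rmk:timegeo2}, and your linearization $t\mapsto A+tB$ with $\langle A,A\rangle=-1$, $\langle B,B\rangle=\langle A,B\rangle=0$ and $A_p+A_{p+q+1}=0$ checks out.
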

\begin{proof}
Up to a horizontal translation, it suffices to show that the half-lines $t\mapsto t(u,v,w)$ and $t\mapsto t(-u,-v,w)$, composed with the embedding $\iota_{p,q}$, converge to the same point in $\Hyp^{p,q}$ at $t\to +\infty$, which can be checked similarly to  Proposition \ref{prop:spacegeo2} and Remark \ref{rmk:timegeo2}.
\end{proof}

We now conclude our analysis by the only case left.

\begin{proposition}\label{lightgeo}
Given a degenerate affine hyperplane $\mathcal L$  in $\partial\HH^{p,q}$, the lightlike geodesics of $\Hyp^{p,q}$ with endpoint $V_{\mathcal L}$ are the horizontal straight lines contained in the vertical hyperplane $V_{\mathcal L}$ itself.
\end{proposition}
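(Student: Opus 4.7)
The plan is to combine Proposition \ref{lightlike}, which classifies lightlike geodesics of $\HH^{p,q}$ as straight lines with lightlike tangent, with the criterion of Remark \ref{rmk:convergence2} identifying points of $\pd_\infty\HH^{p,q}$ as Hausdorff limits of the associated lightcones. Proposition \ref{lightlike} distinguishes two types of lightlike geodesics: those with nonzero vertical component, and horizontal ones contained in some hyperplane $\{z=c\}$. The preceding proposition of this section has already treated the non-horizontal case, showing that their endpoints lie in $\pd\HH^{p,q}\subset\pd_\infty\HH^{p,q}$ and are therefore not of the form $V_{\mathcal L}$. So I would reduce the statement to horizontal lightlike lines $\gamma(t)=(x_0+tu,\,y_0+tv,\,z_0)$ with $\|u\|^2=\|v\|^2$.

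The central step is to compute the Hausdorff limit as $|t|\to\infty$ of the lightcones based at $\gamma(t)$, whose equation is
\[
\|x-x_0-tu\|^2-\|y-y_0-tv\|^2+(z-z_0)^2=0.
\]
The lightlike condition $\|u\|^2=\|v\|^2$ cancels the $t^2$ terms; dividing by $2t$ and letting $|t|\to\infty$, the equation tends to $u\cdot x-v\cdot y=u\cdot x_0-v\cdot y_0$. By Proposition \ref{prop:tot_geo_hyper} and Remark \ref{rem:lightlike by embedding} this is precisely the defining equation of a degenerate vertical hyperplane $V_{\mathcal L_\gamma}$, where $\mathcal L_\gamma\subset\pd\HH^{p,q}$ is the degenerate affine hyperplane with null normal $(u,v)$ passing through $(x_0,y_0)$. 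Thus the endpoint of $\gamma$ in $\pd_\infty\HH^{p,q}$ is $V_{\mathcal L_\gamma}$.

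Finally, $V_{\mathcal L_\gamma}=V_{\mathcal L}$ precisely when the two defining affine equations are proportional, which forces $(u,v)$ to be parallel to the null normal of $\mathcal L$ and $(x_0,y_0)\in\mathcal L$. Equivalently, $\gamma$ is a horizontal line through some point of $\mathcal L$ in the characteristic (radical) direction of $\mathcal L$. Since the null normal of a degenerate hyperplane is contained in its own underlying vector space, such a line is automatically contained in $V_{\mathcal L}$; conversely every horizontal lightlike line in $V_{\mathcal L}$ parallel to its characteristic direction arises this way, giving the desired identification.

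The main obstacle is the lightcone-limit calculation in the second paragraph, in particular justifying carefully that the scaled lightcones indeed converge to the claimed vertical hyperplane in the Hausdorff sense of Definition \ref{defi:topology bdy}, as opposed to merely convergence of the zero sets of the equations. Once the endpoint is identified as a specific vertical hyperplane in terms of $(u,v)$ and $(x_0,y_0)$, the remaining matching of hyperplane equations reduces to elementary linear algebra in the pseudo-Euclidean space $\R^{p-1,q}$.
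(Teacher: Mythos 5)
Your proof is correct and follows essentially the same route as the paper: the paper's argument likewise invokes Remark \ref{rmk:convergence2} and computes, exactly as in Remark \ref{rmk:convergence}, that the lightcones based at $(x_0+tu,y_0+tv,z_0)$ converge to the vertical hyperplane through $(x_0,y_0)$ whose underlying vector space is the orthogonal of $(u,v)$. Your closing observation that only the horizontal lightlike lines in the characteristic (radical) direction of $\mathcal L$ actually have endpoint $V_{\mathcal L}$ is a welcome precision that the paper's statement leaves implicit.
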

\begin{proof}
By Remark \ref{rmk:convergence2}, one has to check that the lightcones emanating from $(x_0+tu,y_0+tv,z_0)$ converge to the vertical hyperplane through $(x_0,y_0)$ whose underlying vector space is the orthogonal of $(u,v)$.  The computation is done exactly as in Remark \ref{rmk:convergence}.
\end{proof}

\section{Horospheres}\label{sec:horo}

We now briefly turn the attention to the study of the horospheres, in the half-space model. Let us recall the definition of horosphere in $\Hyp^{p,q}$.

\begin{definition}
	An horosphere in $\Hyp^{p,q}$ is a smooth hypersurface $S_a$ which is obtained as the projection in $\Hyp^{p,q}$ of
	\begin{equation}\label{eq:horosphere}
	\widetilde S_a=\{X\in\widetilde \Hyp^{p,q}\,|\,\langle X,V\rangle=a\}~,
	\end{equation}
	for some null vector $V\in \R^{p,q+1}$ (\emph{i.e.} $\langle V,V\rangle=0$) and some constant $a\neq 0$. We say that the horosphere $\widetilde S_a$ has point at infinity $[V]\in\partial_\infty\Hyp^{p,q}$.
\end{definition}

Observe that, when $V=\pd/\pd X_p-\pd/\pd X_{p+q+1}$, the corresponding horosphere $S_a$ is precisely the image of ${z=|a|}\subset \HH^{p,q}$ by the embedding $\iota_{p,q}$. We will use again this observation in the proof of Theorem \ref{prop:horo} below.	
	
For the sake of completeness, we provide a well-known characterization of horospheres in $\Hyp^{p,q}$, that generalizes a classical description in hyperbolic space.	
	\begin{lemma}
		The horospheres $S_{a}$ with point at infinity $[V]$ are precisely the smooth hypersurfaces orthogonal to all the spacelike geodesics having $[V]$ as an endpoint at infinity.
	\end{lemma}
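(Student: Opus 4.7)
The plan is to work in the double cover $\widetilde\Hyp^{p,q}\subset \R^{p,q+1}$, where $\widetilde S_a$ is cut out by the single linear equation $\langle X,V\rangle=a$; the statement will then descend to the quotient by $\{\pm\mathrm{Id}\}$. Differentiating this equation, the tangent space of $\widetilde S_a$ at $X$ is
$$T_X\widetilde S_a=\{U\in T_X\widetilde\Hyp^{p,q}\mid \langle U,V\rangle=0\}.$$
Next I would identify, at each point $X\in\widetilde S_a$, the spacelike geodesics of $\widetilde\Hyp^{p,q}$ having $[V]$ as an endpoint at infinity. Every such geodesic can be written as $\gamma(t)=\cosh(t)X+\sinh(t)W$ for some $W\in T_X\widetilde\Hyp^{p,q}$ with $\langle W,W\rangle=1$, and projectively $[\gamma(t)]\to[X+W]$ as $t\to+\infty$. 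Imposing $[X+W]=[V]$ together with $\langle X,W\rangle=0$, and using $\langle V,V\rangle=0$, $\langle X,X\rangle=-1$, $\langle X,V\rangle=a$, a short computation forces $W=-V/a-X$. In particular through each $X\in\widetilde S_a$ there is exactly one such geodesic, and its tangent direction is $\R W$.

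The forward implication then reduces to a one-line computation: for any $U\in T_X\widetilde S_a$,
$$\langle W,U\rangle=-\tfrac{1}{a}\langle V,U\rangle-\langle X,U\rangle=0,$$
since $\langle V,U\rangle=0$ by the description of $T_X\widetilde S_a$ and $\langle X,U\rangle=0$ as $U$ is tangent to $\widetilde\Hyp^{p,q}$. For the converse, let $S$ be a smooth hypersurface through some $X_0$ which is orthogonal, at every $X\in S$, to the unique spacelike geodesic with endpoint $[V]$ through $X$. The vector $W(X)=-V/\langle X,V\rangle-X$ is unit spacelike, so $W(X)^{\perp}\cap T_X\widetilde\Hyp^{p,q}$ is a non-degenerate hyperplane; by the forward implication it equals $T_X\widetilde S_{\langle X,V\rangle}$. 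Comparing dimensions, $T_XS=T_X\widetilde S_{\langle X,V\rangle}$, so $S$ is everywhere tangent to the codimension-one foliation of $\widetilde\Hyp^{p,q}\setminus V^{\perp}$ by the horospheres $\{\widetilde S_a\}_{a\neq 0}$. By uniqueness of the integral hypersurface of an integrable distribution, $S$ coincides locally with a single leaf, which is the desired horosphere.

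I do not expect a serious obstacle. The only mildly delicate point is to note that the \emph{spacelike} condition on the geodesics ensures $W(X)$ spans a non-degenerate line in $T_X\widetilde\Hyp^{p,q}$, so that its pseudo-orthogonal complement is a genuine hyperplane of the correct dimension; this is what makes the dimension count in the converse sharp. Passing the whole argument from $\widetilde\Hyp^{p,q}$ down to $\Hyp^{p,q}$ via the quotient by $\{\pm\mathrm{Id}\}$ is then automatic, since the involution preserves both $\widetilde S_a$ and the class of spacelike geodesics to $[V]$.
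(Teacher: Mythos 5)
Your proposal is correct, and its core is the same as the paper's: both pass to the double cover $\widetilde\Hyp^{p,q}$, identify $T_X\widetilde S_a=X^{\perp}\cap V^{\perp}$, and observe that the spacelike geodesic through $X$ with endpoint $[V]$ lies in $\mathrm{Span}(X,V)$ (your explicit formula $W=-V/a-X$ just makes the paper's containment argument quantitative). Where the two diverge is in how each completes the word ``precisely.'' You prove the converse inclusion head-on: orthogonality forces $T_XS=W(X)^{\perp}\cap T_X\widetilde\Hyp^{p,q}=T_X\widetilde S_{\langle X,V\rangle}$ by a dimension count (valid because $W(X)$ is unit spacelike, hence spans a non-degenerate line), so $S$ is an integral hypersurface of the level-set foliation of $X\mapsto\langle X,V\rangle$ and must lie in a single leaf. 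The paper instead supplements the forward direction with a different piece of information: that \emph{every} spacelike geodesic with endpoint $[V]$ actually meets $S_a$ (via the explicit point $X-\lambda V$ with $\lambda=(\langle X,X\rangle+1)/2a$), which guarantees the orthogonality condition is tested along every such geodesic rather than being vacuous for some of them. The two completions are complementary: your foliation argument yields the uniqueness/characterization statement more directly, while the paper's intersection lemma is the fact one actually wants when using horospheres later; a fully self-contained proof of the ``precisely'' would ideally include both. One cosmetic caveat: your converse shows $S$ is an open subset of a leaf, so strictly speaking one should add a word about connectedness/maximality of $S$ to conclude it \emph{is} the horosphere, but this matches the informal level of the statement itself.
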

	\begin{proof}		
		To check the statement, since orthogonality can be computed locally, we will work in the double cover $\widetilde{\mathbb{H}}^{p,q}$. Let $\widetilde{S}_{a}=\{X\in\widetilde{\mathbb{H}}^{p,q}\,|\,\langle X,V\rangle=a\}$ and $X\in\widetilde{S}_{a}$. Then 
	\begin{equation}\label{eq:tangent horo}
	T_{X}\widetilde S_{a}=T_{X}\widetilde{\mathbb{H}}^{p,q}\cap T_X(V^{\perp}+X)=X^{\perp}\cap V^{\perp}.
	\end{equation}
		
	Now, every geodesic of $\widetilde{\mathbb{H}}^{p,q}$ is contained in the intersection of $\widetilde{\mathbb{H}}^{p,q}$ with a linear 2-dimensional subspace. In particular, the unique spacelike geodesic $\gamma$ such that $\gamma(0)=X$ and having $[V]$ as an endpoint at infinity is contained in $\mathrm{Span}(X,V)$. So $\gamma'(0)\in \mathrm{Span}(X,V)$. Comparing with \eqref{eq:tangent horo}, we showed  that $\gamma'(0)$ intersects $\widetilde{S}_{a}$ orthogonally. 
		
	Finally, observe that every spacelike geodesic in $\Hyp^{p,q}$ with endpoint at infinity $[V]$ intersects $S_a$. Indeed, working again in the double cover, the preimages of spacelike geodesics of $\Hyp^{p,q}$ are the intersection of $\widetilde\Hyp^{p,q}$ with linear 2-dimensional subspaces. Given such a subspace containing the vector $V$, pick $X$ such that  $\langle X,V\rangle=a$.  Then for every $\lambda\in\R$ we have $\langle X-\lambda V,V\rangle=a$. Choosing $\lambda=(\langle X,X\rangle+1)/2a$, we obtain
	$$\langle X-\lambda V,X-\lambda V\rangle=\langle X,X\rangle-2\lambda a=-1,$$
	hence  $X-\lambda V\in\widetilde{\mathbb{H}}^{p,q}$, and therefore $X-\lambda V\in\widetilde S_a$. This concludes the proof.
	\end{proof}

	Despite the term horospheres, which is borrowed from classical hyperbolic geometry, horospheres are not topologically spheres. The boundary at infinity $\partial_\infty S_a$ of a horosphere $S_a$, namely its frontier in $\partial_\infty\Hyp^{p,q}$, is precisely the lightcone in $\partial_\infty\Hyp^{p,q}$ from $[V]$; then $S_a\cup \partial_\infty S_a$ is homeomorphic to $\partial_\infty\Hyp^{p,q}$.

In this section we will describe the horospheres in the half-space model $\HH^{p,q}$.

\begin{theorem}\label{prop:horo}
	The horospheres of $\HH^{p,q}$ are, for a parameter $c>0$:
	\begin{enumerate}
		\item horizontals hyperplanes $\{z=c\}$, if the point at infinity is $\infty$;
		\item wedges of hyperplanes of the form 
		$$z=c|x\cdot u-y\cdot v+d|$$
		if the point at infinity corresponds to the vertical hyperplane $V_\mathcal L$, for $\mathcal L$ the hyperplane of equation
		$x\cdot u-y\cdot v+d=0$ (for $(u,v)\in\R^{p-1,q}$ a null vector and $d\in\R$);
		\item piecewise quadric hypersurfaces of the form 
		$$\|x-x_0\|^2-\|y-y_0\|^{2}+\left(z\pm c\right)^2=c^2$$
		if the point at infinity is $(x_0,y_0,0)\in\pd\HH^{p,q}$.	
	\end{enumerate}
\end{theorem}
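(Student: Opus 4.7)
The plan is to pull back the defining equation $\langle X,V\rangle=a$ of a horosphere along the isometric embedding $\iota_{p,q}$, for a representative null vector $V$ of each of the three types of points at infinity classified in Section \ref{sec:boundary hyperplanes}, and read off the resulting equation in the half-space coordinates $(x,y,z)$. A slight subtlety is that the horosphere $S_a\subset\Hyp^{p,q}$ is the projection of $\widetilde S_a$, so pulling back to $\HH^{p,q}$ one must consider $\langle \iota_{p,q}(x,y,z),V\rangle=\pm a$: this sign ambiguity is precisely what produces the ``piecewise'' structure in cases (2) and (3).

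The first step is to observe that by transitivity of $G$ on $\HH^{p,q}$ (Lemma \ref{lemma:G}) combined with the action of $\isom(\Hyp^{p,q})$ on $\partial_\infty\Hyp^{p,q}$, it suffices to carry out the computation for one representative null vector $V$ in each orbit corresponding to the three types of points at infinity: the point $\infty$ (i.e.\ $V\propto\pd/\pd X_p-\pd/\pd X_{p+q+1}$), a point of $\iota_{p,q}(\pd\HH^{p,q})$ (i.e.\ $V=\iota_{p,q}(x_0,y_0,0)$), and a vertical hyperplane $V_\mathcal L$ (i.e.\ a null $V=(u,\alpha,v,-\alpha)$ with $\|u\|^2=\|v\|^2$, as follows from the discussion in Remark \ref{rem:lightlike by embedding} and the fact that the condition $X_p+X_{p+q+1}=0$ on $[V]$ cuts out $\pd_\infty\HH^{p,q}\setminus\iota_{p,q}(\pd\HH^{p,q})$).

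The computations are then direct. For case (1) one has $\langle\iota_{p,q}(x,y,z),\pd/\pd X_p-\pd/\pd X_{p+q+1}\rangle=X_p+X_{p+q+1}=1/z$, so $\langle X,V\rangle=a$ pulls back to $z=1/a=c$. For case (2), writing out $\langle\iota_{p,q}(x,y,z),(u,\alpha,v,-\alpha)\rangle$ and using $\|u\|^2=\|v\|^2$, the terms involving $h(x,y)+z^2$ cancel and one is left with $(x\cdot u-y\cdot v+\alpha)/z$; hence $\langle X,V\rangle=\pm a$ becomes $z=c|x\cdot u-y\cdot v+d|$ with $c=1/|a|$ and $d=\alpha$, which is a wedge of two hyperplanes meeting along $\mathcal L=\{x\cdot u-y\cdot v+d=0\}$. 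For case (3), one uses $V=\iota_{p,q}(x_0,y_0,0)$ and simplifies the inner product; the ``$-h(x_0,y_0)$'' terms combine neatly with the Euclidean cross terms to give
\[
\langle\iota_{p,q}(x,y,z),V\rangle=-\frac{\|x-x_0\|^2-\|y-y_0\|^2+z^2}{2z},
\]
and setting this equal to $\pm a$ yields $\|x-x_0\|^2-\|y-y_0\|^2+(z\pm c)^2=c^2$ with $c=|a|$.

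The main technical obstacle is the algebraic simplification in case (3): one must carefully expand $(1-h-z^2)(1-h_0)-(1+h+z^2)(1+h_0)$ and recognize that the result reassembles into $-2(\|x-x_0\|^2-\|y-y_0\|^2+z^2)$ after combining with the linear $x\cdot x_0-y\cdot y_0$ contribution. Beyond this, one should verify that $V$ is indeed null in each case (trivial for case (1), the condition $\|u\|^2=\|v\|^2$ in case (2), and a one-line check for case (3) using $\langle\iota_{p,q}(x_0,y_0,0),\iota_{p,q}(x_0,y_0,0)\rangle=0$ which was already established in the extension of $\iota_{p,q}$ to $\pd\HH^{p,q}$). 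Since every null vector in $\R^{p,q+1}$ is $G$-equivalent (via the extension of $G$ to $\isom(\Hyp^{p,q})$) to one of the three representatives above, this exhausts all horospheres, completing the classification.
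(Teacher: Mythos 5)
Your proposal is correct and follows essentially the same route as the paper: pull back $|\langle X,V\rangle|=a$ through $\tilde\iota_{p,q}$ for a null vector $V$ representing each of the three types of boundary points, with the sign ambiguity from the $\{\pm\mathrm{Id}\}$ quotient accounting for the piecewise structure. The only differences are cosmetic (you keep the general $d$ and $(x_0,y_0)$ rather than translating to the origin, and your normalization of $V$ in case (3) shifts the constant from $c=a/2$ to $c=a$, which is immaterial since $V$ is only defined up to scale).
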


See Figures \ref{fig:horo12} and \ref{fig:horo3}.

\begin{figure}[htb]
\centering
\begin{minipage}[c]{.5\textwidth}
\centering
\includegraphics[height=3.8cm]{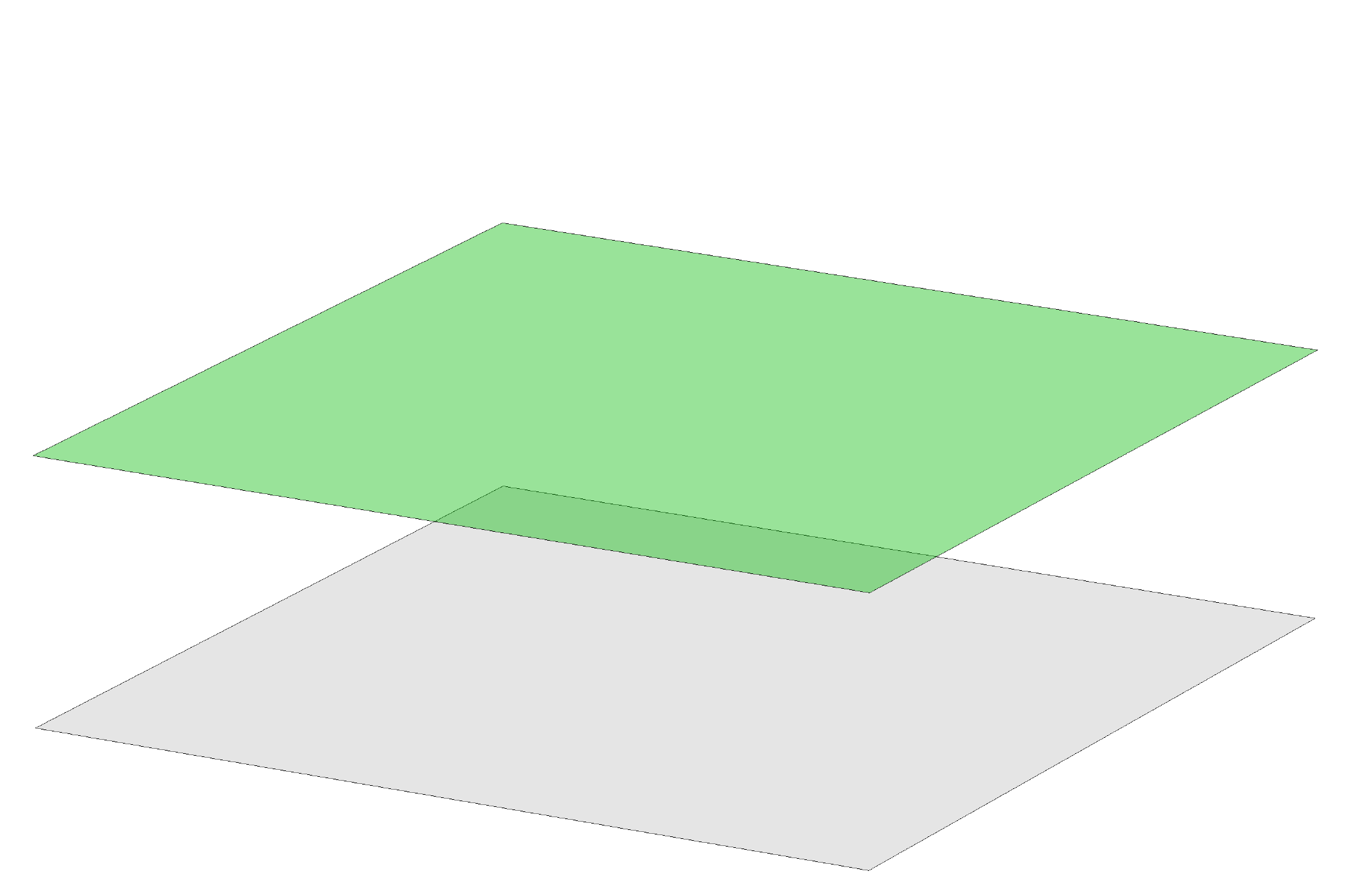}
\end{minipage}%
\begin{minipage}[c]{.5\textwidth}
\centering
\includegraphics[height=3.6cm]{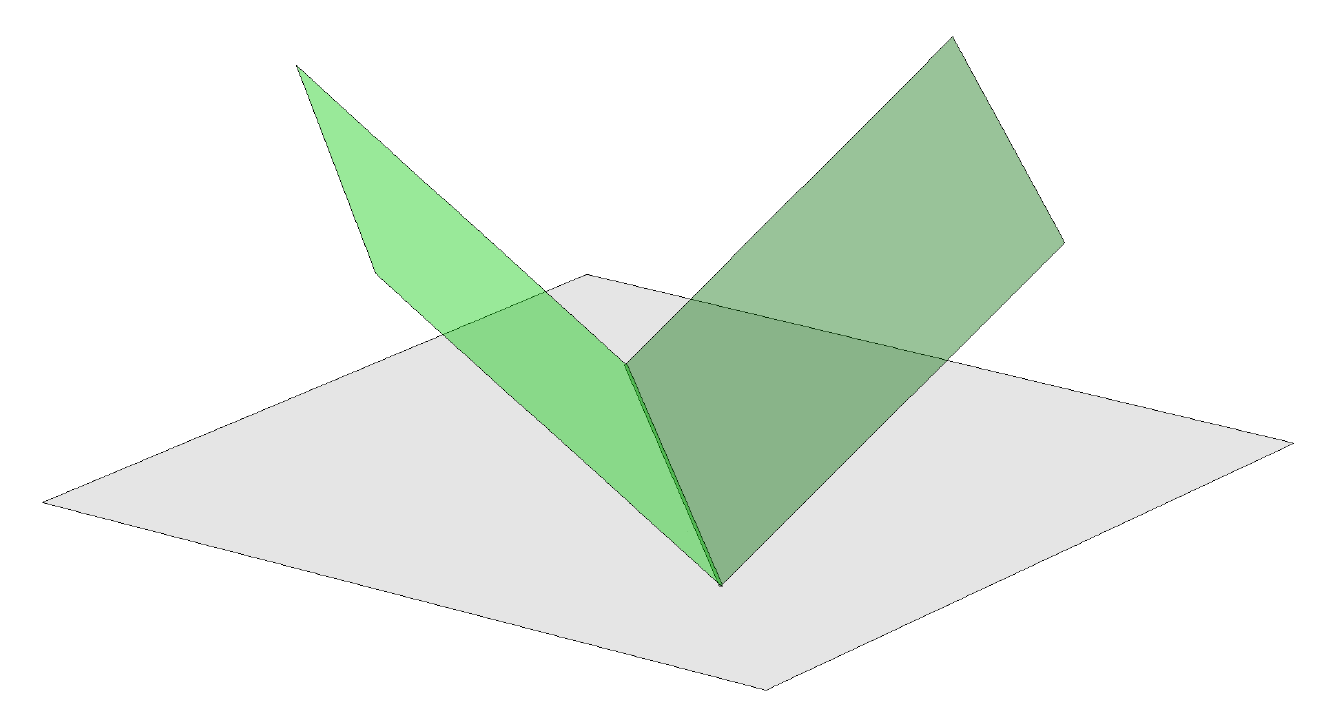}
\end{minipage}%
\caption{Horizontal horospheres, and wedges of hyperplanes.} \label{fig:horo12}
\end{figure}

\begin{figure}
\begin{center}
 \includegraphics[height=6cm]{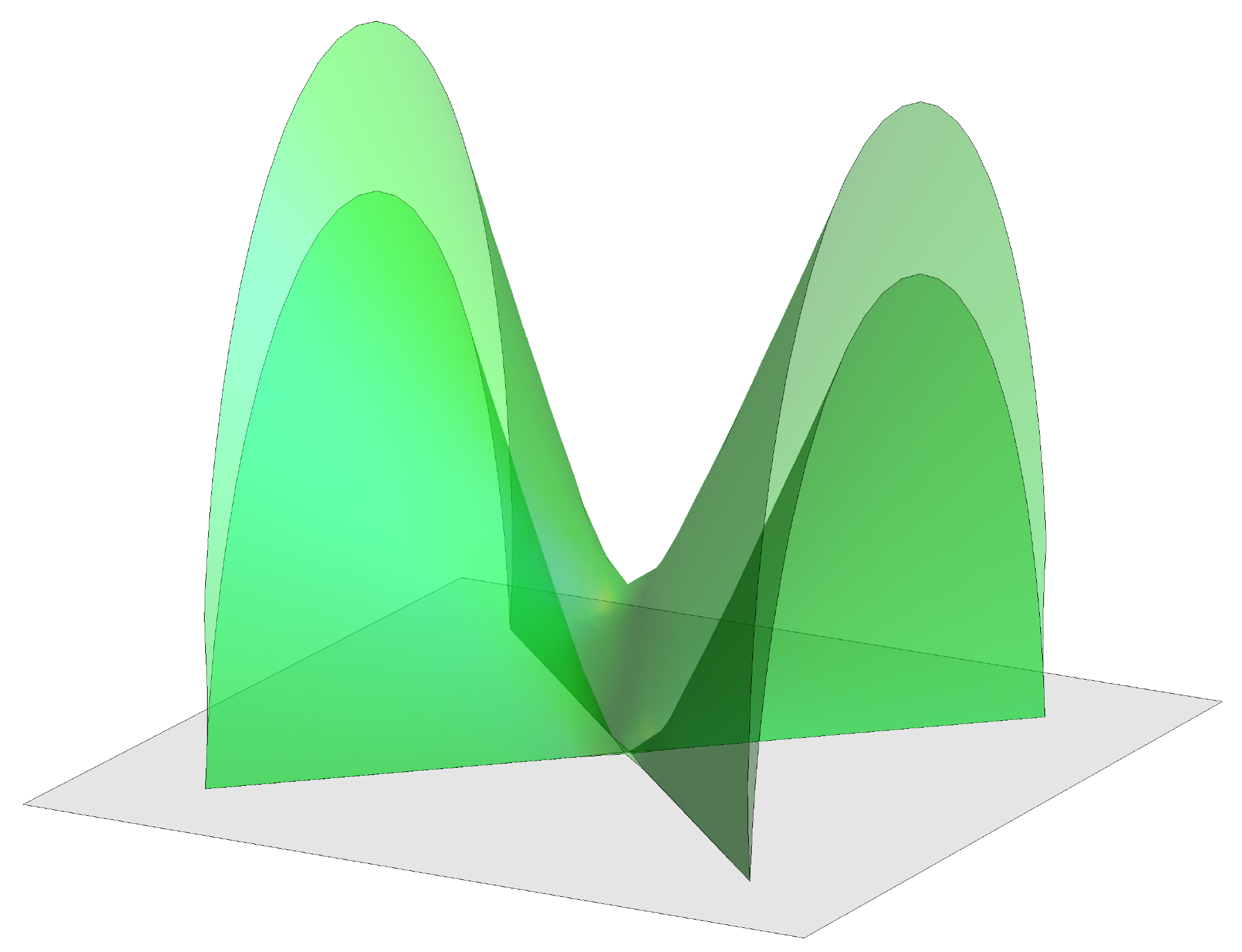}
\end{center}
\caption{Horospheres in $\HH^{2,1}$ corresponding to a point in $\partial\HH^{2,1}$.}\label{fig:horo3}
\end{figure}

\begin{proof}	
	Recall that we introduced the embedding $\tilde\iota_{p,q}\colon\HH^{p,q}\to \widetilde\Hyp^{p,q}$ in the proof Proposition \ref{emb}, that induces the embedding $\iota_{p,q}\colon\HH^{p,q}\to \widetilde\Hyp^{p,q}$ in the quotient. Also observe that every point in a horosphere $S_a$ has two preimages $X$ in $\widetilde\Hyp^{p,q}$, which satisfy either $\langle X,V\rangle=a$ or $\langle X,V\rangle=-a$. Hence to determine the horospheres in $\HH^{p,q}$ (or more precisely, the portion of horospheres contained in $\HH^{p,q}$) it suffices to find the preimage of $|\langle X,V\rangle|=a$ under $\tilde\iota_{p,q}$, for $a>0$.
	
	In the first case, the point at infinity is $\infty$ and corresponds to $[V]$ in $\partial_\infty\Hyp^{p,q}$, where $V$ is defined by $V_p=1$, $V_{p+q+1}=-1$, and all the other $V_i$'s vanish. In Proposition \ref{emb} we showed that, via $\tilde\iota_{p,q}$, $X_{p}+X_{p+q+1}=1/z$. This shows that the level sets $\{z=c\}$ are precisely the preimages of $\langle X,(0,1,0,-1)\rangle=a$, where $a=1/c$.
	
	Consider now the case where the point at infinity is $V_\mathcal L$. Up to a translation, we can assume that $d=0$, namely $\mathcal L$ is a degenerate hyperplane in $\R^{p-1,q}$ containing the origin. The corrisponding point $[V]\in\partial_\infty\Hyp^{p,q}$ is $[u:0:v:0]$. Hence the preimage of the horosphere defined by the equation $|\langle X,V\rangle|=a$ is the set of solutions of 
	$$|x\cdot u-y\cdot v|=az$$
	which concludes the second case, again by setting $c=1/a$.

	Finally, we consider the case where the point at infinity is $(x_0,y_0,0)\in\partial\HH^{p,q}$. Up to translation, we can assume $(x_0,y_0)=(0,0)$. The corresponding point in $\partial_\infty\Hyp^{p,q}$ is $[V]$, where $V_p=V_{p+q+1}=1$ and the other coordinates of $V$ are zero. Hence we need to determine the preimage of those $X$ satisfying $$|\langle X,(0,1,0,1)\rangle|=|X_p-X_{p+q+1}|=a.$$ Observe that  $X_{p}-X_{p+q+1}=-(h(x,y)+z^2)/z$, hence we obtain the equation $|h(x,y)+z^2|=az$. A simple manipulation gives the equivalent expression
	$$\|x\|^{2}-\|y\|^{2}+\left(z\pm\frac{a}{2}\right)^2=\frac{a^2}{4}~,$$
	which is the desired formula, for $c=a/2$.
\end{proof}

\begin{remark}
We conclude by remarking that the proof of Theorem \ref{prop:horo} could have been done by checking directly that the hypersurfaces of the three types are orthogonal to all the spacelike geodesics in $\HH^{p,q}$ which share an endpoint in $\partial_\infty\HH^{p,q}$. This is evident for the horizontal horospheres $\{z=c\}$, which are orthogonal to all vertical geodesics, \emph{i.e.} with endpoint $\infty$. 

For the wedges of hyperplanes, one can show directly, using Proposition \ref{prop:endpoint parabola}
 and Remark \ref{rmk:endpoint parabola}, that the union of the hyperplanes $z=\pm(x\cdot u-y\cdot v)$ is orthogonal to all the parabolas whose endpoint corresponds to the vertical hyperplane $V_\mathcal L$, $\mathcal L=\{x\cdot u-y\cdot v=0\}$, namely those parabolas which are obtained as the intersection of a vertical 2-plane projecting to an affine line directed by $(u,v)$, and a lightcone based on a point of $\mathcal L$. 

Finally, for the horospheres of the third type, one could check that these are orthogonal to the geodesics with endpoint in $(x_0,y_0,0)$ in the following way. Up to an isometry of $\HH^{p,q}$, assume $(x_0,y_0)=(0,0)$ and $c=1$. Then  one ``sweeps'' the hypersurface by curves of four types. The first case is that of a curve contained in a vertical 2-plane which is positive definite. Up to an isometry of the form $(x,y,z)\mapsto (A(x,y),z)$, which leaves the horosphere invariant, it suffices to consider the curve $x_1(t)=\sin(t)$, $z(t)=\cos(t)+1$, and all the other coordinates identically zero. Then one shows that this curve is orthogonal to all the geodesics with endpoint $(0,0)$ that it intersects, which are ellipses (circles, in this specific situation). This is exactly analogous to the half-space model of $\Hyp^n$. Second, one consider curves in a vertical 2-plane which is indefinite. Again up to isometry, one reduces to two curves, defined by $y_1(t)=\sinh(t)$, and $z(t)=\cosh(t)+1$ or $z(t)=\cosh(t)-1$. These are orthogonal to all the spacelike geodesics which are hyperbolas and have $(0,0)$ as an endpoint. In the former case, the curve intersects the branch containing $(0,0)$; in the latter, the other branch. Finally, the horizontal planar curves contained in the horosphere are trivially ortogonal to all parabolas with endpoint $(0,0)$, because they are lightlike and contained in a degenerate vertical 2-plane.
\end{remark}

\section{Isometries}\label{sec:isometry}

Let us conclude this paper by describing the isometries of $\HH^{p,q}$, and the action of those of $\Hyp^{p,q}$, in the half-space model.

\subsection{The isometry group $\isom(\HH^{p,q})$}

We remarked in Section \ref{sym} that $\iota_{p,q}$ induces a monomorphism $G\to\isom(\Hyp^{p,q})$, as a consequence of the fact that local isometries between open neighbourhoods of $\Hyp^{p,q}$ uniquely extend to global isometries. From our study of geodesics in Section \ref{sec:geo}, we can deduce that the group $G$ introduced in Lemma \ref{lemma:G} is the full isometry group of the half-space model. 

\begin{theorem}\label{cor:G}
When $q\geq 1$, the group $G$ coincides with the isometry group $\isom(\HH^{p,q})$. Moreover, $G$ corresponds precisely to the isometries of $\isom(\Hyp^{p,q})$ that preserve the totally geodesic degenerate hyperplane $\Hyp^{p,q}\setminus\iota_{p,q}(\HH^{p,q})$.
\end{theorem}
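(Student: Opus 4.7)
My plan is to reduce the first assertion ($G=\isom(\HH^{p,q})$) to showing that any isometry $f\in\isom(\HH^{p,q})$ fixing the base point $p_0=(0,0,1)$ must lie in the stabilizer $\mathrm{O}(p-1,q)\subset G$ of Lemma \ref{lemma:G}; the second assertion will then follow formally from the embedding $\iota_{p,q}$.

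By the transitivity of $G$, I may precompose $f$ with an element of $G$ and assume $f(p_0)=p_0$. The key step is to prove $df_{p_0}(\pd/\pd z)=\pd/\pd z$. Once this is granted, $df_{p_0}$ preserves both the vertical direction and its orthogonal complement $\spa\{\pd/\pd x_i,\pd/\pd y_j\}$, which carries the metric of signature $(p-1,q)$; hence $df_{p_0}$ restricted to the latter coincides with some $A\in\mathrm{O}(p-1,q)$. The element $g\in G$ given by $g(x,y,z)=(A(x,y),z)$ then fixes $p_0$ with the same differential as $f$, so $f=g$ by the standard uniqueness of isometries of a connected pseudo-Riemannian manifold (isometries are determined by value and differential at a single point, via normal neighbourhoods).

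The heart of the proof is establishing the vertical-direction claim. As recalled in Section \ref{sym}, the isometry $f$ (viewed on the open subset $\iota_{p,q}(\HH^{p,q})\subset\Hyp^{p,q}$) extends uniquely to a global isometry $\tilde f\in\isom(\Hyp^{p,q})$; by uniqueness $\tilde f$ preserves $\iota_{p,q}(\HH^{p,q})$ and hence the complementary degenerate hyperplane $P\cap\Hyp^{p,q}$. Since the kernel direction of this degenerate hyperplane is spanned by $V=\pd/\pd X_p-\pd/\pd X_{p+q+1}$, $\tilde f$ preserves the null line $\R V$, equivalently fixes $[V]\in\pd_\infty\Hyp^{p,q}$, which under the identification of Section \ref{sec:boundary hyperplanes} corresponds to $\infty\in\pd_\infty\HH^{p,q}$. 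From the analysis of endpoints in Section \ref{sec:bdy} (in particular the proposition following Proposition \ref{prop:spacegeo2}), vertical lines are the unique geodesics of $\HH^{p,q}$ having $\infty$ as an endpoint, so $\tilde f$ sends the vertical geodesic $\gamma(t)=(0,0,e^t)$ through $p_0$ to itself, whence $df_{p_0}(\pd/\pd z)=\pm\pd/\pd z$. To rule out the minus sign I observe that the other endpoint $\gamma(-\infty)=(0,0,0)$ corresponds under $\iota_{p,q}$ to the projective class $[e_p+e_{p+q+1}]$, which is distinct from $[V]$ (a direct check from the extended formula in Proposition \ref{emb}); hence $df_{p_0}(\pd/\pd z)=-\pd/\pd z$ would force $\tilde f([V])=[e_p+e_{p+q+1}]\neq[V]$, contradicting $\tilde f([V])=[V]$.

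For the second assertion, the inclusion $G\subseteq\{\psi\in\isom(\Hyp^{p,q}):\psi(P\cap\Hyp^{p,q})=P\cap\Hyp^{p,q}\}$ is immediate: via the monomorphism $G\hookrightarrow\isom(\Hyp^{p,q})$ described in Section \ref{sym}, elements of $G$ preserve $\iota_{p,q}(\HH^{p,q})$ hence its complement. Conversely, any $\psi\in\isom(\Hyp^{p,q})$ preserving $P\cap\Hyp^{p,q}$ preserves $\iota_{p,q}(\HH^{p,q})$, and so $\iota_{p,q}^{-1}\circ\psi\circ\iota_{p,q}\in\isom(\HH^{p,q})=G$ by the first part. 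The main obstacle I anticipate is the sign-ruling step above, but given the explicit endpoint formulas this is a direct verification.
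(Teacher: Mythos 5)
Your proof is correct, and its overall skeleton (reduce via transitivity of $G$ to an isometry fixing $(0,0,1)$, show $df_{(0,0,1)}(\pd/\pd z)=\pd/\pd z$, then conclude by rigidity of isometries from their $1$-jet at a point) is the same as the paper's; but you carry out the crucial step by a genuinely different mechanism. The paper stays entirely inside $\HH^{p,q}$ and uses only the completeness analysis of lightlike geodesics from Proposition \ref{lightlike}: if $df(\pd/\pd z)=-\pd/\pd z$, an incomplete upward lightlike ray would be sent to a complete downward one, and if $df$ did not preserve the horizontal hyperplane, a horizontal (complete) lightlike line would be sent to a non-horizontal (incomplete) one. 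You instead pass to $\Hyp^{p,q}$: the extension $\tilde f$ preserves the degenerate hyperplane at infinity, hence its radical $\R V$, hence fixes $\infty\in\pd_\infty\HH^{p,q}$; you then invoke the endpoint classification of Section \ref{sec:bdy} (vertical lines are the only geodesics with endpoint $\infty$, and their two endpoints are distinct) to force $df(\pd/\pd z)=\pd/\pd z$. This is sound; the one point you leave implicit is that the endpoints at infinity of $f\circ\gamma$ are the $\tilde f$-images of those of $\gamma$, i.e.\ the continuity of the projective action of $\tilde f$ on $\pd_\infty\Hyp^{p,q}$ --- standard, but worth a sentence. As for what each approach buys: the paper's argument is more self-contained, relying only on Section \ref{sec:geo}, whereas yours needs the boundary machinery of Section \ref{sec:bdy}; in exchange, yours exhibits $G$ directly as the stabilizer of the boundary point $\infty$ (which is essentially the second assertion of the theorem), and it does not use that the horizontal hyperplane $\R^{p-1,q}$ is spanned by lightlike vectors, so it applies verbatim also when $p=1$, a case where the paper's second alternative requires a separate (easy) remark.
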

\begin{proof}
Since $G<\isom(\HH^{p,q})$ acts transitively by Lemma \ref{lemma:G}, it suffices to prove that $\stab_{G}(0,0,1)=\stab_{\isom(\HH^{p,q})}(0,0,1)$.
We observe that  $\stab_{G}(0,0,1)$ is the subgroup of $\stab_{\isom(\HH^{p,q})}(0,0,1)$ preserving oriented vertical lines, \emph{i.e.} it consists of those isometries  $f$ such that $df_{(0,0,1)}(\pd/\pd z)=\pd/\pd z$. We claim that all isometries $f$ in $\stab_{\isom(\HH^{p,q})}(0,0,1)$ have this property. 

By contradiction, assume $df_{(0,0,1)}(\pd/\pd z)\neq \pd/\pd z$. First, if $df_{(0,0,1)}(\pd/\pd z)=-\pd/\pd z$, then a lightlike geodesic starting at $(0,0,1)$ and parameterized in such a way that the $z$-coordinate is increasing along the geodesic (hence incomplete) would be sent to another lightlike geodesic parameterized in such a way that the $z$-coordinate is decreasing (hence complete) which is an absurd since isometries preserve completeness of geodesics. Otherwise, $df_{(0,0,1)}(\pd/\pd z)^{\perp}\ne(\pd/\pd z)^{\perp}$. The horizontal hyperplane $(\pd/\pd z)^{\perp}\cong\R^{p-1,q}$ is generated by lightlike vectors, hence there exists a horizontal lightlike vector $v$ such that $df_{(0,0,1)}(v)$ is not horizontal. This is an absurd as lightlike geodesics are complete (in both directions) if and only if the initial velocity is horizontal (Lemma \ref{lightlike}), and again isometries preserve completeness.

The second part of the statement is clear, because every isometry of $\HH^{p,q}$ extends to an isometry of $\Hyp^{p,q}$ which preserves the image of $\iota_{p,q}$, hence also its complement. Conversely, every isometry of $\Hyp^{p,q}$ that preserves the image of $\iota_{p,q}$ induces an isometry of $\HH^{p,q}$, and therefore is in $G$.
\end{proof}

\subsection{Inversions}
In order to describe the action of the isometry group $\isom(\Hyp^{p,q})$ on the half-space model, we now introduce a new type of isometries, that are the analogous of inversions in hyperbolic geometry. Recall that, given a point $(x_0,y_0)\in\pd\HH^{p,q}$, $Q_{(x_0,y_0)}$ denotes the totally geodesic hypersurface made of lightlike geodesics with endpoint $(x_0,y_0)$, as in \eqref{eq:conex0y0}.

\begin{proposition}\label{prop:Jextends}
	The involution $\mathcal J\colon\HH^{p,q}\setminus Q_{(0,0)}\to\HH^{p,q}\setminus Q_{(0,0)}$ defined by	
	
$$
			(x,y,z)\mapsto\left(\mu(x,y,z) x,\mu(x,y,z) y,|\mu(x,y,z)|z\right)~,
$$
	where $\mu(x,y,z):=(\|x\|^2-\|y\|^2+z^2)^{-1}$, is an isometry which extends to a global isometry of $\Hyp^{p,q}$ via $\iota_{p,q}$. 
\end{proposition}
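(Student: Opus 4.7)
The plan is to show that $\mathcal J$ is conjugate, via the embedding $\iota_{p,q}$, to a linear reflection of $\R^{p,q+1}$ preserving $\widetilde\Hyp^{p,q}$. This will simultaneously yield that $\mathcal J$ is an isometry of its domain and that it extends to a global isometry of $\Hyp^{p,q}$.

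Concretely, let $\sigma\colon\R^{p,q+1}\to \R^{p,q+1}$ be the linear involution negating the coordinate $X_p$ and fixing all others. This is an orthogonal reflection with respect to $\langle\cdot,\cdot\rangle$ across the non-degenerate spacelike hyperplane $\{X_p=0\}$; it preserves $\widetilde\Hyp^{p,q}$ and commutes with $-\mathrm{Id}$, hence it descends to a global isometry $\bar\sigma\in\isom(\Hyp^{p,q})$. The claim will follow from the identity
\begin{equation*}
\iota_{p,q}\circ \mathcal J=\bar\sigma\circ \iota_{p,q}\qquad\text{on }\HH^{p,q}\setminus Q_{(0,0)}.
\end{equation*}

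The main step is a direct coordinate computation. Set $\mu=(\|x\|^2-\|y\|^2+z^2)^{-1}$ and $(x',y',z')=\mathcal J(x,y,z)=(\mu x,\mu y,|\mu|z)$. Then $h(x',y')=\mu^2 h(x,y)$ and $(z')^2=\mu^2 z^2$, so
\begin{equation*}
h(x',y')+(z')^2=\mu^2\bigl(h(x,y)+z^2\bigr)=\mu.
\end{equation*}
Substituting into the defining formulas of $\tilde\iota_{p,q}$ and using $1/z'=\mathrm{sgn}(\mu)/(\mu z)$, a short calculation gives $X'_i=\mathrm{sgn}(\mu)X_i$ for $i\ne p$ and $X'_p=-\mathrm{sgn}(\mu)X_p$; that is,
\begin{equation*}
\tilde\iota_{p,q}(\mathcal J(x,y,z))=\mathrm{sgn}(\mu)\,\sigma\bigl(\tilde\iota_{p,q}(x,y,z)\bigr).
\end{equation*}
Projecting to $\Hyp^{p,q}$ absorbs the overall sign into the quotient by $\{\pm\mathrm{Id}\}$, proving the displayed identity.

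Once the identity is established, the proposition follows immediately: $\iota_{p,q}$ is an isometric embedding and $\bar\sigma$ is an isometry, so $\mathcal J$ is an isometry of $\HH^{p,q}\setminus Q_{(0,0)}$, and its extension to a global isometry of $\Hyp^{p,q}$ is precisely $\bar\sigma$. The main subtle point to handle carefully is the absolute value in $z'=|\mu|z$: it is needed to keep the image in the half-space $\{z>0\}$, and it is responsible for the sign $\mathrm{sgn}(\mu)$ in the ambient computation. That this sign disappears exactly after passing to the $\{\pm\mathrm{Id}\}$-quotient is the reason $\mathcal J$ exists as a single-valued map on $\HH^{p,q}\setminus Q_{(0,0)}$ and yet lifts to a single linear reflection on $\R^{p,q+1}$. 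As a sanity check, the fixed locus $\{X_p=0\}$ corresponds to $\{\|x\|^2-\|y\|^2+z^2=1\}$, which is the non-degenerate totally geodesic hypersurface of type \eqref{eq:quadric} with $c=1$ and $(x_0,y_0)=(0,0)$, as expected for a reflection.
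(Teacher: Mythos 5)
Your proof is correct and follows essentially the same route as the paper's: both identify $\mathcal J$, via the coordinate formulas of $\tilde\iota_{p,q}$, with the linear reflection negating $X_p$ (up to the sign $\mathrm{sgn}(\mu)$, which the paper handles by splitting into the cases $\mu>0$ and $\mu<0$ and which disappears in the quotient by $\{\pm\mathrm{Id}\}$). Your computation $h(x',y')+(z')^2=\mu$, which is the paper's identity $\mu(\mathcal J(x,y,z))=\mu(x,y,z)^{-1}$, is the same key step.
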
		
	We remark that if $q=0$, $Q_{(0,0)}=\emptyset$ and $\mu>0$, hence we recover the fact that $\mathcal J$ is a global isometry of the hyperbolic space.

\begin{proof}
	Let $f\in \isom(\Hyp^{p,q})$ be the isometry induced by the reflection in the hyperplane $X_p=0$. To prove the statement, we show that the following diagram commutes:
	\[
	\begin{tikzcd}
		\HH^{p,q}\setminus Q_{(0,0)}\arrow[r,"\iota_{p,q}"]\arrow[d,"\mathcal J"] & \Hyp^{p,q}\arrow[d,"f"]\\
		\HH^{p,q}\setminus Q_{(0,0)}\arrow[r,"\iota_{p,q}"] & \Hyp^{p,q}	
	\end{tikzcd}.
	\]
	We first remark that 
	\begin{equation}\label{eq:involution}
	\mu(\mathcal J(x,y,z))=\mu(x,y,z)^{-2}(\|x\|^2-\|y\|^2+z^2)^{-1}=\mu(x,y,z)^{-1}~,
	\end{equation}
	which also immediately implies that $\mathcal J$ is an involution.  
	Observe that $\mathcal J$ is defined precisely on the complement of $\{\mu=0\}$. Suppose first $\mu>0$. Denote 
	$\tilde\iota_{p,q}(x,y,z)=(X_1,\ldots,X_{p+q+1})$ (these are defined in the proof of Proposition \ref{emb}) and  $\tilde\iota_{p,q}\circ \mathcal J(x,y,z)=(Y_1,\ldots,Y_{p+q+1})$. We have:
	\begin{align*}
		&Y_i=\frac{\mu x}{\mu z}=\frac{x}{z}=X_i && i=1,\dots p-1\\
		&Y_p=\frac{1-\mu}{2\mu z}=-\frac{1-h(x,y)-z^2}{2z}=-X_p&&\\
		&Y_{j+p}=\frac{\mu y}{\mu z}=\frac{y}{z}=X_{j+p} && j=1,\dots q\\
		&Y_{p+q+1}=\frac{1+\mu}{2\mu z}=\frac{1+h(x,y)+z^2}{2z}=X_{p+q+1}&&
	\end{align*}
where in the second and fourth line we have used \eqref{eq:involution} together with the fact that, in the notation of the embedding, $\mu(x,y,z)^{-1}=h(x,y)+z^2$. This shows that $\tilde\iota_{p,q}\circ \mathcal J=\tilde f\circ\tilde\iota_{p,q}$, where $\tilde f\in \mathrm{O}(p,q+1)$ is the reflection fixing the hyperplane $X_p=0$.

One immediately checks that $\tilde\iota_{p,q}\circ\mathcal J=-\tilde f\circ\tilde\iota_{p,q}$ when $\mu<0$. Since $\tilde f$ and $-\tilde f$ induce the same isometry on $\Hyp^{p,q}$, the claim is proved.
\end{proof}

\begin{remark}
We saw that the involution $\mathcal J$ corresponds to a reflection in $\isom(\Hyp^{p,q})$. In fact its fixed point set is the totally geodesic hypersurface 
$$\|x\|^2-\|y\|^2+z^2=1~.$$
The other inversions, fixing the general totally geodesic hypersurface of the form \eqref{eq:quadric} for $c>0$, can be easily found conjugating $\mathcal J$ by elements of $G$.
\end{remark}

\subsection{Action of $\isom(\Hyp^{p,q})$}

We conclude by describing the action of the full isometry group $\isom(\Hyp^{p,q})$ on $\HH^{p.q}$. Roughly speaking, the subgroup $G$ and the inversion $\mathcal J$ (or more precisely, their extensions to $\Hyp^{p,q}$) generate $\isom(\Hyp^{p,q})$. 

\begin{theorem}
	Any isometry of $\Hyp^{p,q}$ can be written in $\HH^{p,q}$ as the composition of elements of $G$ and $\mathcal J$.
\end{theorem}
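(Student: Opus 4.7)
The plan is to exploit Theorem~\ref{cor:G}: once extended via $\iota_{p,q}$ to $\Hyp^{p,q}$, the group $G$ coincides with the stabilizer of the point $\infty\in\pd_\infty\Hyp^{p,q}$, i.e.\ of the degenerate hyperplane $P_\infty=\Hyp^{p,q}\setminus\iota_{p,q}(\HH^{p,q})$. If one can show that the subgroup $H:=\langle G,\mathcal J\rangle\subset\isom(\Hyp^{p,q})$ acts transitively on $\pd_\infty\Hyp^{p,q}$, then the theorem follows by a routine argument: given $\phi\in\isom(\Hyp^{p,q})$, pick $\psi\in H$ such that $\psi(\infty)=\phi(\infty)$; the composition $\psi^{-1}\phi$ fixes $\infty$, hence belongs to $\stab(\infty)=G\subset H$, and therefore $\phi=\psi\cdot(\psi^{-1}\phi)\in H$.

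To prove the transitivity, I would recall from Section~\ref{sec:boundary hyperplanes} that $\pd_\infty\HH^{p,q}$ decomposes as the disjoint union of $\{\infty\}$, the horizontal boundary $\pd\HH^{p,q}\cong\R^{p-1,q}$, and the set of vertical degenerate hyperplanes $V_\mathcal L$. The group $G$ clearly acts transitively on each of these three pieces individually: on $\pd\HH^{p,q}$ by the horizontal translations, and on the family of vertical hyperplanes by combining the transitive action of $\mathrm O(p-1,q)$ on null vectors of $\R^{p-1,q}$ (up to sign and rescaling) with translations and dilations to adjust the intercept. It is therefore enough to show that $\mathcal J$ bridges the three $G$-orbits.

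For this, the essential tool is Proposition~\ref{prop:Jextends}: the extension of $\mathcal J$ to $\Hyp^{p,q}$ is the reflection through $\{X_p=0\}$, which acts on projective classes by negating the $p$-th coordinate. The null direction $\pd/\pd X_p-\pd/\pd X_{p+q+1}$ representing $\infty$ is thereby sent to $[0:1:0:1]=\iota_{p,q}(0,0,0)$, so $\mathcal J$ connects the first two $G$-orbits. Similarly, a nonzero null point $(x_0,y_0,0)\in\pd\HH^{p,q}$ (meaning $h(x_0,y_0)=0$) is represented, by the formulas defining $\tilde\iota_{p,q}$, by the class $[x_0:1/2:y_0:1/2]$, which the reflection sends to $[x_0:-1/2:y_0:1/2]$: the $p$-th and $(p+q+1)$-th coordinates sum to zero, so this class represents a vertical hyperplane at infinity (as in the analysis preceding Definition~\ref{defi:topology bdy}). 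Combined with $G$-transitivity on $\pd\HH^{p,q}$, this bridges the second and third $G$-orbits and completes the proof.

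The main obstacle is simply keeping track of $\mathcal J$ on the boundary, where it is a priori not defined as a transformation of the half-space; the reflection description of Proposition~\ref{prop:Jextends} makes this clean. Two edge cases deserve a short comment: when $q=0$ the statement is vacuous since $\iota_{p,0}$ is surjective and $G=\isom(\Hyp^{p,0})$, while when $p=1$ the third $G$-orbit is empty (as $\R^{0,q}$ has no null directions) and only the first bridging computation is needed.
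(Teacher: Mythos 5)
Your proof is correct and follows essentially the same strategy as the paper: reduce to showing that $\langle G,\mathcal J\rangle$ acts transitively on $\pd_\infty\HH^{p,q}$, use the $G$-transitivity on each of the three pieces, and use $\mathcal J$ to bridge them. The only difference is that for the bridge between $\pd\HH^{p,q}$ and the vertical hyperplanes $V_{\mathcal L}$ you compute explicitly the image under the reflection of a nonzero null point $[x_0:1/2:y_0:1/2]$, whereas the paper argues softly that the homeomorphism induced on $\pd_\infty\Hyp^{p,q}$ must send some $V_{\mathcal L}$ near $\infty$ into a neighbourhood of $(0,0)$; your version is more concrete and equally valid. One small slip in your closing aside: for $q=0$ the statement is not vacuous and $G\neq\isom(\Hyp^{p,0})$ (e.g.\ $\mathcal J$ itself does not fix $\infty$); $G$ is only the stabilizer of $\infty$, and the classical generation of $\isom(\Hyp^p)$ by similarities and the inversion is exactly an instance of the theorem --- but this does not affect your argument for $q\geq 1$.
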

\begin{proof}
	Since $G$ corresponds precisely to the stabilizer of a point in $\pd_\infty\Hyp^{p,q}$ by Theorem \ref{cor:G}, it suffices to show that the elements of $G$, together with $\mathcal J$, induce a transitive action on $\pd_\infty\HH^{p,q}$. Clearly $G$ acts transitively on $\pd\HH^{p,q}$, while $\mathcal J$ maps $(0,0)\in\pd\HH^{p,q}$ to $\infty$, which is trivial since $\iota_{p,q}(0,0,0)=[0:1:0:1]$ and $\iota_{p,q}(\infty)=[0:-1:0:1]$. Also, $G$ acts transitively on the degenerate vertical hyperplanes of the form $V_{\mathcal L}$. Hence it remains to show that in the subgroup generated by $G$ and $\mathcal J$, there is an element that maps some point in $\pd\HH^{p,q}$ to some point in $\pd_\infty\HH^{p,q}$ that corresponds to a vertical hyperplane $V_{\mathcal L}$. But this clear because by Proposition \ref{prop:Jextends} $\mathcal J$ extends to an element $f\in\isom(\Hyp^{p,q})$, whose action on $\pd_{\infty}\Hyp^{p,q}$ is a homeomorphism, hence it maps a neighbourhood of $\infty$ (which contains elements of the form $V_{\mathcal L}$) to a neighborhood of $(0,0)$ (which only contains points in $\pd\HH^{p,q}$). 
	\end{proof}

%
%

\end{document}